\documentclass[12pt]{amsart}
\usepackage{amssymb, xcolor}
\usepackage{amsmath}
\usepackage{graphicx}
\usepackage{framed}
\usepackage{mathrsfs}
\usepackage{url}
\usepackage{cite}
\usepackage{fullpage}
\usepackage[hidelinks]{hyperref}
\usepackage{stmaryrd}
\SetSymbolFont{stmry}{bold}{U}{stmry}{m}{n}

\usepackage{array}
\usepackage{enumitem}
\usepackage{mathscinet}
\usepackage{tikz}
\usetikzlibrary{arrows.meta,shapes.geometric}
\usepackage{placeins}

\definecolor{pointcountgreen}{RGB}{40,115,75}
\definecolor{weightblue}{RGB}{33,102,172}
\definecolor{hodgeorange}{RGB}{179,88,6}

\newtheorem{thm}{Theorem}[section]
\newtheorem{prop}[thm]{Proposition}
\newtheorem{lem}[thm]{Lemma}
\newtheorem{cor}[thm]{Corollary}

\definecolor{inputblue}{RGB}{240,246,251}
\definecolor{inputorange}{RGB}{255,247,229}
\newenvironment{inputbox}
 {%
  \MakeFramed{\advance\hsize-2\width\FrameRestore}\noindent}
 {\endMakeFramed}
 
\theoremstyle{definition}
\newtheorem{rem}[thm]{Remark}

\newtheorem{computerverification}[thm]{Computer Verification}

\numberwithin{equation}{section}

\newcommand{\Z}{\mathbb{Z}}
\newcommand{\F}{\mathbb{F}}

\newcommand{\R}{\mathbb{R}}

\newcommand{\M}{\mathcal{M}}

\newcommand{\incoeff}{\mathrel{\in_{\mathrm{coeff}}}}

\newcommand{\ZZ}{\mathbb{Z}}
\newcommand{\CC}{\mathbb{C}}
\newcommand{\QQ}{\mathbb{Q}}
\newcommand{\Q}{\mathbb{Q}}

\newcommand{\bbS}{\mathbb{S}}

\newcommand{\gr}{\mathrm{gr}}

\DeclareMathOperator{\vdim}{dim}

\newcommand{\Ecal}{\mathcal E}
\newcommand{\Hcal}{\mathcal H}
\newcommand{\Rcal}{\mathcal R}
\newcommand{\Fcal}{\mathcal F}

\newcommand{\Qcal}{\mathcal Q}
\newcommand{\Mcal}{\mathbf M}
\newcommand{\Sfac}{\mathsf S}
\newcommand{\Trunc}[1]{T_{\leq #1}}
\newcommand{\rise}[2]{\mathopen{}\left(#1\right)^{\overline{#2}}\mathclose{}}
\newcommand{\fall}[2]{\mathopen{}\left(#1\right)^{\underline{#2}}\mathclose{}}

\author{Sam Payne}
\email{sdpayne@umich.edu}
\author{Thomas Willwacher}
\email{thomas.willwacher@math.ethz.ch}
\thanks{
S.P. was supported in part by NSF grant DMS--2542134 and a Simons Fellowship.}

\title{Polynomial point counts for moduli spaces of curves with marked points}

\begin{document}

\begin{abstract}
    We prove that the number of curves of a fixed genus $g$ with $n$ marked points over finite fields is a polynomial function of the cardinality of the field if and only if $g=0$ or $3g+2n \leq 24$. This confirms a conjecture of Canning, Larson, and the authors.
\end{abstract}

\maketitle

\setcounter{tocdepth}{1}
\tableofcontents

\section{Introduction}
Let $N_{g,n}(q)$ be the number of geometric isomorphism classes of smooth projective curves of genus $g$ with $n$ marked points that are defined over a finite field $\F_q$ of order $q$. This agrees with the stacky point count $\# \M_{g,n}(\F_q)$ when $2g + n > 2$.

\begin{thm}\label{thm:main pointcounts}
 The function  $N_{g,n}(q)$ is a polynomial in $q$ if and only if $g=0$ or $3g+2n\leq 24$.
\end{thm}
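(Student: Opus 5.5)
The plan has two directions, and both are phrased in terms of the Grothendieck--Lefschetz trace formula for the compactly supported $\ell$-adic cohomology of $\M_{g,n}$ together with its $\bbS_n$-equivariant structure. For $g=0$, $\M_{0,n}$ is a complement of a hyperplane arrangement in affine space, so its point count is visibly polynomial in $q$.

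\textbf{Polynomiality when $3g+2n\le 24$.} For $g\ge 1$ and $3g+2n\le 24$, I would reduce to a finite list of pairs $(g,n)$ and show for each that $H^\bullet_c(\M_{g,n})$ is built only from Tate-type (pure Lefschetz) pieces. Concretely, I would compute the $\bbS_n$-equivariant weight-graded Euler characteristic via the Getzler--Kapranov graph-sum formalism. This expresses $\overline{\M}_{g,n}$ in terms of the pure cohomology of the $\overline{\M}_{g',n'}$. The inputs are the known results that $H^k(\overline{\M}_{g,n})$ is of Tate type in low degrees and in the relevant range, with the first non-Tate classes being $\mathsf{LS}_{12}$ in $H^{11}(\overline{\M}_{1,11})$ and the related odd classes. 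I would then show that in the stated range, every non-Tate motive ($\mathsf{L}\mathsf{S}_{12}$, $\mathsf{S}_{16}$, etc.) cancels in the Euler characteristic. Equivalently, the weight-graded Euler characteristic of $\M_{g,n}$ involves only powers of $\mathsf{L}$, and Lefschetz trace then gives a polynomial. Where cancellation needs finer information, I would instead use known point counts: for $g\le 2$ these come from Bergström's counts, and for $g=3$ from Bergström's work on plane quartics and hyperelliptic curves. The boundary cases, for example $(1,10)$, $(2,9)$, $(3,7)$, $(4,6)$, $(8,0)$ and so on, would be checked by computer.

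\textbf{Non-polynomiality when $3g+2n>24$, $g\ge1$.} Here I need a non-Tate contribution that survives. If $N_{g,n}$ were polynomial, then by Chebotarev/trace arguments (as in Canning--Larson--Payne--Willwacher) the semisimplified Euler characteristic in the Grothendieck group of Galois representations would be a polynomial in $\mathsf{L}$. So it suffices to exhibit a coefficient of a non-Tate motive, such as $\mathsf{S}_{12}$ or a twist of it, that is nonzero in $\chi_c(\M_{g,n})$. I would propagate this from minimal cases using forgetful maps. Adding a point increases $3g+2n$ by 2, and the known minimal non-polynomial cases, such as $(1,11)$, $(2,10)$, $(3,8)$ or $(4,7)$, lie on the boundary line $3g+2n=25$ or $26$. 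I would then show by an inductive argument on $n$ that an $\mathsf{S}_{12}$-isotypic coefficient in the $\bbS_n$-equivariant Euler characteristic, after taking $\bbS_n$-invariants appropriately, remains nonzero. A cleaner alternative is to exploit the weight-11 part of $H^\bullet_c(\M_{g,n})$, which arises from boundary strata of type $\overline{\M}_{1,11}$ glued to genus-0 and genus-$(g-1)$ pieces, and to show it contributes with a definite sign.

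\textbf{Expected obstacle.} The main obstacle is the uniform non-vanishing for all large $g$ and $n$, where cancellation could in principle occur. My first attempt would isolate the lowest-weight non-Tate piece, weight $11$ carrying $\mathsf{S}_{12}$, of $\gr^W H^\bullet_c(\M_{g,n})$. This piece is computed by a graph complex with one genus-1 vertex carrying $\mathsf{LS}_{12}$-type decoration, so its Euler characteristic reduces to a combinatorial count that I would show is nonzero whenever $3g+2n\ge 25$.
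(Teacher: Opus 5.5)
Your reduction for the ``only if'' direction matches the paper. Polynomial point counts force the Euler characteristic to be a polynomial in $\mathsf L$: the paper uses Katz's theorem, and your Chebotarev/Brauer--Nesbitt version does the same job. So it suffices to exhibit a nonzero Euler characteristic in odd weight or in a Hodge type $(p,q)$ with $p\neq q$. The ``if'' direction is exactly what the paper takes from \cite[Theorem~1.5]{CLPW}, so citing it is enough.

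The genuine gap is in proving nonvanishing for all $g\geq1$ with $3g+2n\geq25$, which is the whole content of the paper, and both of your mechanisms fail. First, induction along forgetful maps does not work. For $\pi\colon\M_{g,n+1}\to\M_{g,n}$ the fibre is $C\smallsetminus\{p_1,\dots,p_n\}$, so in the Grothendieck group
\[
 e_c(\M_{g,n+1})=(1+\mathsf L-n)\,e_c(\M_{g,n})-e_c(\M_{g,n};\mathbb V),
\]
where $\mathbb V=R^1\pi_*\QQ$ is the rank-$2g$ local system. On point counts, the extra term is $\sum_x a_{C_x}/|\mathrm{Aut}\,x|$ with $a_C=q+1-\#C(\F_q)$. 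Nothing in your induction controls $e_c(\M_{g,n};\mathbb V)$, and its non-Tate part can cancel that of the first term. So non-polynomiality at $(g,n)$ says nothing about $(g,n+1)$. Separately, $N_{g,n}$ is governed by the non-equivariant Euler characteristic (the character at the identity), not by $\bbS_n$-invariants, which compute $\M_{g,n}/\bbS_n$.

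Second, the weight-$11$ contribution has no definite sign. ``Show the count is nonzero whenever $3g+2n\geq25$'' just restates the theorem, and for $\chi_{11}$ alone it is not even what the paper can prove. Recall $\chi_{11}(\M_{g,n})=2\,n!\,a_{g,n}$. As $g\to\infty$ with $n/g$ fixed, the leading term of $(-1)^{n+1}a_{g,n}/\Sfac_{g,n}$ tends to $\Trunc{10}\bigl(e^{Lw}\mathscr L_{\rho-1}(w)\bigr)$, where $\rho=g\bmod4$ and $L=\log(1+n/g)$. The sign of this limit depends on $g\bmod 4$, and it is a polynomial in $L$ with several positive zeros. Along the corresponding rays $n\approx(e^L-1)g$ the leading term cancels for arbitrarily large $g$. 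Hence no sign argument, fixed-$n$ estimate, or bounded computation yields uniform nonvanishing. In small genus the paper must also exclude $(12,0)$ and $(8,1)$ from the weight-$11$ statement and treat them with $\chi_{13}$ from \cite{CLPW}.

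The missing ideas are the following.
\begin{enumerate}[label=(\alph*)]
\item Explicit generating functions for $\chi_{11}$ and for the Hodge-type Euler characteristic $\chi_{15,0}$ (coming from $\mathsf S_{16}$). These are the same series truncated by $\Trunc{10}$ and $\Trunc{14}$ respectively.
\item For $g\leq2000$, an arithmetic argument. The finite expansion $\Fcal_g=\sum_eC_{g,e}(w)(1-z)^{e-w}$ and the congruence $\rise{X-e}{n+p}\equiv(X^p-X)\rise{X-e}{n}\pmod p$ reduce $n!\,a_{g,n}$ modulo suitable primes to the explicit Bernoulli expression $s!\,D_1(g,s)$. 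This is completed by prime windows from Dusart's bounds and a finite computation for $n<50000$.
\item For $g>2000$, a leading-term versus remainder estimate that uses $\chi_{11}$ and $\chi_{15,0}$ together. Their critical slopes are disjoint for each parity of $g$, so at least one leading term is bounded away from zero.
\end{enumerate}

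A minor correction: the weight-$11$ class in $H^{11}(\overline{\M}_{1,11})$ is $\mathsf S_{12}$; the class $\mathsf{LS}_{12}$ has weight $13$.
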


\noindent This confirms a conjecture of Canning, Larson, and the authors  \cite[Conjecture~1.6]{CLPW}, who proved it for $n = 0$ and for $g + n < 150$. The case $g=0$ was also previously known; the remaining cases are new.
Figure~\ref{fig:pointcount-regions} illustrates the polynomial range and the division of the proof.

\begin{figure}[tbp]
\centering
\begin{tikzpicture}[font=\small,>=Stealth]
  \begin{scope}[x=.60cm,y=.28cm]
    \node at (4.3,15.3) {Polynomial point counts};
    \fill[pointcountgreen!13] (0,0)--(8,0)--(0,12)--cycle;
    \draw[gray,densely dashed] (0,12)--(8,0);
    \draw[->] (0,0)--(9.3,0) node[right] {$g$};
    \draw[->] (0,0)--(0,14.1) node[above] {$n$};
    \foreach \gg in {0,2,4,6,8} {
      \draw (\gg,0)--(\gg,-.22) node[below] {$\gg$};
    }
    \foreach \nn in {2,4,6,8,10,12} {
      \draw (0,\nn)--(-.10,\nn) node[left] {$\nn$};
    }
    \foreach \gg in {1,...,8} {
      \foreach \nn in {0,...,12} {
        \pgfmathtruncatemacro{\polyflag}{3*\gg+2*\nn}
        \ifnum\polyflag<25
          \fill[pointcountgreen] (\gg,\nn) circle[radius=1.25pt];
        \fi
      }
    }
    \draw[pointcountgreen,line width=1.7pt] (0,0)--(0,13.5);
    \foreach \nn in {0,...,12} {
      \fill[pointcountgreen] (0,\nn) circle[radius=1.25pt];
    }
    \node[anchor=west] at (3.1,8.6) {$3g+2n=24$};
    \node[align=center] at (6.8,11.0) {Nonpolynomial};
  \end{scope}
  \begin{scope}[shift={(8cm,0)},x=1cm,y=1cm]
    \node at (2.5,4.284) {Division of the proof};
    \fill[black!5] (0,0) rectangle (2.6,2.1);
    \fill[pointcountgreen!7] (0,2.1) rectangle (2.6,3.7);
    \fill[pointcountgreen!20] (0,0)--(.9,0)--(0,.9)--cycle;
    \draw[gray] (2.6,0)--(2.6,3.7);
    \draw[gray] (0,2.1)--(2.6,2.1);
    \draw[->] (0,0)--(5.6,0) node[right] {$g$};
    \draw[->] (0,0)--(0,3.9) node[above] {$n$};
    \draw (2.6,0)--(2.6,-.08) node[below] {$2000$};
    \draw (0,2.1)--(-.08,2.1) node[left] {$50\,000$};
    \node[align=center] at (1.3,3.0) {Arithmetic\\argument};
    \node[align=center] at (1.3,1.35) {Finite\\computation};
    \node[align=center] at (4.15,2.05) {Analytic\\argument};
  \end{scope}
\end{tikzpicture}
\caption{Polynomial point counts occur precisely on the green $g=0$ axis and at the lattice points satisfying $3g+2n\leq24$. The right panel shows the division of the proof schematically.}
\label{fig:pointcount-regions}
\end{figure}

We prove nonpolynomiality by detecting nonzero Euler characteristics in odd weight or in Hodge type $(p,q)$ with $p\neq q$. Katz's theorem implies that polynomial point counts force all such Euler characteristics to vanish. We recall the precise statement in Section~\ref{sec:pointcounts}.

We consider the Euler characteristics
\[
\chi_{11}(\M_{g,n}) := \sum_{j}(-1)^j \vdim_\QQ \gr_{11} H_c^j{\M_{g,n}},
\]
and
\[
\chi_{15,0}(\M_{g,n}) := \sum_{j}(-1)^j \vdim_\CC  \gr_{15,0} \big(H_c^j{\M_{g,n}} \otimes \CC \big).
\]
Here, $\gr_k$ denotes the $k$th graded piece of Deligne's weight filtration on cohomology with rational coefficients, and $\gr_{p,q}(H\otimes\CC)$ denotes the summand of Hodge type $(p,q)$ in the complexification of the rational Hodge structure $\gr_{p+q}H$.

\begin{thm}\label{thm:main nonvanishing}
Let $g\geq 1$ and $n\geq 0$ be such that $3g+2n\geq 25$ and $(g,n)\notin\{(12,0),(8,1)\}$. Then $\chi_{11}(\M_{g,n})\neq 0$ or $\chi_{15,0}(\M_{g,n})\neq 0$.
\end{thm}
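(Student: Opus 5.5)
We reduce the statement to an explicit combinatorial formula and then split the $(g,n)$ range into three regimes, as in Figure~\ref{fig:pointcount-regions}.

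\textbf{The formula.} The weight $11$ and Hodge type $(15,0)$ parts of the compactly supported Euler characteristic of $\M_{g,n}$ come from the cusp forms $\Delta$ of weight $12$ and $\Delta_{16}$ of weight $16$. By additivity of Euler characteristics, we stratify $\overline{\M}_{g,n}$ by stable graphs. The only pieces of $H^*(\overline{\M}_{g',n'})$ of weight $11$, or of type $(15,0)$, occurring in the relevant range arise from genus one vertices (through $\M_{1,11}$ and $\M_{1,15}$ respectively) and from genus two vertices. For these low genus strata the relevant Euler characteristics are known explicitly as $\bbS_n$-equivariant classes, so we may take them as input.

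Following the graph sum formalism of \cite{CLPW}, we write $\chi_{11}(\M_{g,n})$ and $\chi_{15,0}(\M_{g,n})$ as explicit sums. Each term is indexed by a genus one or genus two vertex carrying the cusp-form contribution, attached to a "pure" remainder. Using the generating function identities of Getzler--Kapranov type, the remainder contributes a known polynomial-count quantity such as $e_c(\M_{g-1,n+k})$ with weight $0$ specialization. Concretely, we aim for a closed formula of the form
\[
\chi_{11}(\M_{g,n}) = \sum_{k} c_k(n)\, a_{g-1,n+k},
\]
where the $a$'s are computable Euler-characteristic-type numbers (for instance via Harer--Zagier/Bini--Harer type formulas) and the $c_k$ are explicit binomial-type coefficients. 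There is an analogous formula for $\chi_{15,0}$.

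\textbf{The three regimes.} For $g\le 2000$ and $n\le 50\,000$ we evaluate the formulas by computer and check nonvanishing. This extends the $g+n<150$ computation of \cite{CLPW}; the exceptions $(12,0)$ and $(8,1)$ are exactly where both vanish.

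For large $g$ we use an analytic argument. We extract asymptotics of the $a_{g,n}$, which grow like $(2g)!\,(2\pi)^{-2g}$ times explicit factors, and show that a single dominant term in the graph sum exceeds the sum of all others in absolute value. This gives a lower bound of the form $|\chi_{11}|\ge (1-\epsilon)|\text{main term}|>0$.

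For $g\le 2000$ and $n>50\,000$ we use an arithmetic argument. For fixed $g$, the quantity $\chi(\M_{g,n})$ is, as a function of $n$, a combination of $n!$ times a quasi-polynomial. We therefore compute a suitable residue modulo a small prime, or a $p$-adic valuation of the leading term, that is nonzero for all large $n$ and is periodic in $n$. Periodicity lets us verify it on finitely many residue classes.

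\textbf{Main obstacle.} The hardest step is the analytic regime. The expansions for $\chi_{11}$ have many terms with alternating signs and substantial cancellation, so the error bounds must be uniform in both $g$ and $n$ with explicit constants. We would first try to bound the tail by a geometric series in the ratio of consecutive $a_{g,n}$ terms, treating $n\le g$ and $n>g$ separately. If cancellation defeats $\chi_{11}$ in some subrange, we would switch to $\chi_{15,0}$ there.
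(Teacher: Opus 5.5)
Your three-region layout matches the paper's, but both infinite regimes rely on mechanisms that cannot work as stated.

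\textbf{Arithmetic regime.} For fixed $g$, $\chi_{11}(\M_{g,n})$ is not $n!$ times a quasi-polynomial in $n$. The finite expansion $n![z^n]\Fcal_g(z,X)=\sum_e C_{g,e}(X)\rise{X-e}{n}$ shows that it is a fixed $\QQ$-linear combination of the low-degree coefficients $[X^k]\rise{X-e}{n}$ with $k\le 10$. These are Stirling-type sums: already $n!D_1(1,n)=-(n-2)!$, and higher $k$ bring in harmonic-type sums. More importantly, no fixed prime can detect nonvanishing. Each $[X^k]\rise{X-e}{n}$ is a sum of products of all but $k$ of $n$ consecutive integers, so it is divisible by $p^{\lfloor n/p\rfloor-k}$. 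The denominators of the $C_{g,e}$ are bounded for fixed $g$, so $v_p(\chi_{11}(\M_{g,n}))\to\infty$ as $n\to\infty$. Its residue mod $p$ is therefore eventually $0$, not periodic and nonzero. Tracking exact valuations instead would mean controlling $p$-adic behaviour of harmonic-type sums, which is notoriously irregular. The missing idea is to let the prime grow with $n$. From $\rise{X-e}{n+p}\equiv(X^p-X)\rise{X-e}{n}\pmod p$, choose $p$ with $n=9p+s$ and $g<s<p-d_g$. Then $n!D_{10}(g,n)\equiv -s!D_1(g,s)\pmod p$. Here $D_1(g,s)$ is an explicit Bernoulli number times a factorial, nonzero mod $p$ unless $p$ divides the numerator $A_g$. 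One still needs such a prime in $\bigl((n+d_g)/10,(n-g)/9\bigr)$ avoiding the few large prime divisors of $A_g$. The paper gets this from explicit prime-counting estimates for huge $n$, a computed prime cover for intermediate $n$, and direct modular computation below $50000$.

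\textbf{Analytic regime.} No single dominant term controls $\chi_{11}$ uniformly. The correct scale is $\binom{g+n-3}{n}(g-2)!/(2\pi)^g$, not a Harer--Zagier-type $(2g)!/(2\pi)^{2g}$. After this normalization, the leading ($m=0,1$) contribution tends, along each ray with $n/g$ fixed, to $\Trunc{10}\bigl(e^{Lw}\mathscr L_{\rho-1}(w)\bigr)$ with $L=\log(1+n/g)$. This is a polynomial in $L$ with four positive real zeros for each parity of $g$. Along the rays $n\approx(e^L-1)g$ the normalized main term of $\chi_{11}$ tends to $0$. These rays reach arbitrarily large $g$ and lie on both sides of $n=g$, so no uniform domination estimate for $\chi_{11}$ alone can close. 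Neither a finite computation nor your split into $n\le g$ and $n>g$ isolates these regions. Indeed, the paper proves only that $\chi_{11}$ and $\chi_{15,0}$ do not vanish simultaneously for $g>2000$. Your fallback ``switch to $\chi_{15,0}$'' points the right way, but it becomes an argument only once you prove three things. First, for each parity of $g$ the zero sets of the limiting polynomials for $\Gamma=10$ and $\Gamma=14$ are disjoint. Second, $\max_\Gamma|M_\Gamma(g,n)|$ is bounded below uniformly in $g$ and $n$, including $n/g\to\infty$, where for even $g$ the top $L$-coefficient is exponentially suppressed. Third, the error is bounded uniformly below that constant. The paper can do this because both generating functions are $\Trunc{10}$ and $\Trunc{14}$ of one deformation of the weight-zero series, so your input formula must be that specific one; Harer--Zagier and Bini--Harer numbers play no role. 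This lets one expansion serve both truncations. The paper then changes variables to $x=1/g$, $s=g/(g+n)$, $L$ (and $1/L$ for large $L$), and certifies the lower bound $1/5$ against the error bound $1/30$ by interval arithmetic.
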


\noindent The nonpolynomiality of $N_{12,0}$ and $N_{8,1}$ was proved in \cite{CLPW} by showing that the weight-$13$ Euler characteristic $\chi_{13}$ does not vanish in these two cases.

\medskip

The proof of Theorem~\ref{thm:main nonvanishing} has two parts, both based on the generating-function formulas for $\chi_{11}$ and $\chi_{15,0}$ recalled in Section~\ref{sec:generating-functions}. The bounded-genus argument in Section~\ref{sec:mainA} is arithmetic. For $g\leq 2000$, a finite expansion of the weight-$11$ generating function expresses the dependence on $n$ in terms of rising factorials. The behavior of rising factorials modulo primes reduces nonvanishing for large $n$ to finding primes in suitable intervals, which is proved using well-known prime-counting estimates. Finite exact computations cover the remaining small values of $n$. This shows that $\chi_{11}$ is nonzero throughout the required range, apart from $(12,0)$ and $(8,1)$, which were treated in \cite{CLPW} using $\chi_{13}$.

The high-genus argument in Section~\ref{sec:mainB} is analytic. For $g>2000$, we build on the argument for $n=0$ in \cite[Section~6]{CLPW}, which separates a leading contribution from a remainder and proves that the leading contribution dominates. The main difficulty is to obtain such domination uniformly in the number of marked points. For each parity of $g$, as $g$ grows with $n/g$ fixed, the normalized leading contribution, up to an overall sign, has a limit that vanishes at finitely many ratios, which we call \emph{critical slopes}. Near the corresponding lines $n=cg$, cancellation makes the leading contribution small. These troublesome regions extend to arbitrarily large $g$, with $n$ increasing proportionally: estimates for each fixed $n$ do not provide the required uniform control, and computations in any bounded range of genera leave these regions beyond that range untreated.

We overcome this difficulty by considering $\chi_{11}$ and $\chi_{15,0}$ together. For each parity of $g$, their sets of critical slopes are disjoint. Near a critical slope for one Euler characteristic, the limiting leading contribution for the other remains nonzero. For every $g>2000$, uniform estimates show that at least one leading contribution dominates its remainder.

Figure~\ref{fig:first-critical-slopes} shows the three smallest critical slopes for each parity, with different vertical scales in the two panels. The complete sets are displayed in Figure~\ref{fig:all-critical-slopes}, accompanying Remark~\ref{rem:critical-slopes} in Section~\ref{sec:mainB}.

\begin{figure}[tbp]
\centering
\begin{tikzpicture}[font=\small,>=Stealth]
  \begin{scope}[x=.85cm,y=.16cm]
    \node at (3,29) {Even genus};
    \fill[black!5] (0,0) rectangle (2,24);
    \draw[gray,densely dotted] (2,0)--(2,24);
    \draw[->] (0,0)--(6.35,0) node[right] {$g$};
    \draw[->] (0,0)--(0,25.5) node[above] {$n$};
    \foreach \xx/\lab in {0/0,2/2000,4/4000,6/6000} {
      \draw (\xx,0)--(\xx,-.45) node[below,font=\footnotesize] {$\lab$};
    }
    \foreach \yy/\lab in {5/5000,10/10000,15/15000,20/20000} {
      \draw (0,\yy)--(-.08,\yy) node[left,font=\footnotesize] {$\lab$};
    }
    \draw[weightblue,line width=1pt] (0,0)--(6,2.0105030345);
    \draw[hodgeorange,line width=1pt,dashed] (0,0)--(6,1.0430417351);
    \draw[hodgeorange,line width=1pt,dashed] (0,0)--(6,21.3503940769);
  \end{scope}
  \begin{scope}[shift={(8cm,0)},x=.85cm,y=.075cm]
    \node at (3,61.8667) {Odd genus};
    \fill[black!5] (0,0) rectangle (2,51.2);
    \draw[gray,densely dotted] (2,0)--(2,51.2);
    \draw[->] (0,0)--(6.35,0) node[right] {$g$};
    \draw[->] (0,0)--(0,54.4) node[above] {$n$};
    \foreach \xx/\lab in {0/0,2/2000,4/4000,6/6000} {
      \draw (\xx,0)--(\xx,-.96) node[below,font=\footnotesize] {$\lab$};
    }
    \foreach \yy/\lab in {10/10000,20/20000,30/30000,40/40000,50/50000} {
      \draw (0,\yy)--(-.08,\yy) node[left,font=\footnotesize] {$\lab$};
    }
    \draw[weightblue,line width=1pt] (0,0)--(6,14.3247750676);
    \draw[hodgeorange,line width=1pt,dashed] (0,0)--(6,7.7439064865);
    \draw[hodgeorange,line width=1pt,dashed] (0,0)--(6,50.4864346802);
  \end{scope}
  \draw[weightblue,line width=1pt] (4.4,-.85)--(5.1,-.85) node[right,black] {$\chi_{11}$};
  \draw[hodgeorange,line width=1pt,dashed] (7.2,-.85)--(7.9,-.85) node[right,black] {$\chi_{15,0}$};
\end{tikzpicture}
\caption{The three smallest critical slopes for even and odd genus.}
\label{fig:first-critical-slopes}
\end{figure}

The use of both Euler characteristics in the analytic argument is facilitated by the special form of their generating functions. Up to normalization, the generating functions for $\chi_{11}$ and $\chi_{15,0}$ are obtained by applying the truncations $T_{\leq 10}$ and $T_{\leq 14}$, respectively, to the same series, a one-parameter deformation of the generating function for weight-$0$ Euler characteristics conjectured by Zagier and proved in \cite{CFGP}. The same expansion and coefficient estimates therefore apply to both, while the different truncations give different sets of critical slopes. In particular, the finite expansions of Section~\ref{sec:generating-functions}, on which the arithmetic argument rests, are deformed versions of the finiteness of the weight-$0$ generating function in each fixed genus.

Combining the new results of this paper with \cite[Theorem~1.5 and Remark~3.8]{CLPW} gives the following stronger form of Theorem~\ref{thm:main pointcounts}. See Section~\ref{sec:pointcounts}.

\begin{cor}\label{cor:tautological-tate-pointcounts}
Assume $2g-2+n>0$. Then the following are equivalent:
\begin{enumerate}[label=(\roman*)]
\item The $E_1$-page of the weight spectral sequence associated to $\M_{g,n}\subset\overline{\M}_{g,n}$ is generated by tautological classes.
\item The rational cohomology of $\M_{g,n}$ is of Tate type.
\item There is a polynomial $P \in \Z[T]$ such that $\#\M_{g,n}(\F_q)=P(q)$ for all finite fields $\F_q$.
\item Either $g=0$ or $3g+2n\leq24$.
\end{enumerate}
\end{cor}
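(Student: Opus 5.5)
The corollary will follow from a cycle of implications (i)$\Rightarrow$(ii)$\Rightarrow$(iii)$\Rightarrow$(iv)$\Rightarrow$(i). Only the step (iii)$\Rightarrow$(iv) needs the new results of this paper; the others come from known facts and from \cite{CLPW}.

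For (i)$\Rightarrow$(ii): the weight spectral sequence for $\M_{g,n}\subset\overline{\M}_{g,n}$ converges to $H^*(\M_{g,n})$ with its weight filtration. Its $E_1$-page is built from the cohomology of the normalized boundary strata, which are products of $\overline{\M}_{g_i,n_i}$ modulo finite groups. Tautological classes are algebraic cycle classes, so they are of Tate type. If $E_1$ is tautologically generated, every subquotient $\gr^W H^*(\M_{g,n})$ is therefore a sum of Tate Hodge structures, and (ii) holds.

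For (ii)$\Rightarrow$(iii): I would use the comparison and Katz's theorem as recalled in Section~\ref{sec:pointcounts}. Tate type cohomology, together with the Galois-equivariant version of the weight spectral sequence, gives Frobenius traces $\pm q^k$ on $H^*_c$ in $\ell$-adic cohomology. By Grothendieck--Lefschetz this gives a polynomial point count with integer coefficients. This is exactly what \cite[Remark~3.8]{CLPW} packages, and I would cite it there.

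For (iii)$\Rightarrow$(iv): by Katz's theorem as recalled in Section~\ref{sec:pointcounts}, a polynomial point count forces the $E$-polynomial to equal $P$. Hence every Euler characteristic in odd weight, and every Hodge-type Euler characteristic with $p\neq q$, vanishes. In particular $\chi_{11}$ and $\chi_{15,0}$ both vanish. Now suppose $g\geq1$ and $3g+2n\geq25$. If $(g,n)\notin\{(12,0),(8,1)\}$, Theorem~\ref{thm:main nonvanishing} gives a contradiction. In the two exceptional cases, \cite{CLPW} shows $\chi_{13}\neq0$, which again contradicts Katz. So $g=0$ or $3g+2n\leq24$.

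For (iv)$\Rightarrow$(i): this is \cite[Theorem~1.5]{CLPW}. It states that the $E_1$-page is tautologically generated in the range $3g+2n\leq24$. For $g=0$ the same holds because every boundary stratum is a product of $\overline{\M}_{0,m}$'s, whose Chow and cohomology rings are tautological by Keel. I expect the only delicate point to be checking that the cited statements in \cite{CLPW} match these equivalences exactly, including the $g=0$ case and the hypothesis $2g-2+n>0$. The substantive work lies entirely in Theorem~\ref{thm:main nonvanishing}.
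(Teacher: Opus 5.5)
\textbf{There is a genuine gap at (ii)$\Rightarrow$(iii).} Your steps (i)$\Rightarrow$(ii), (iii)$\Rightarrow$(iv) and (iv)$\Rightarrow$(i) are fine. For (iv)$\Rightarrow$(i) the paper actually cites \cite[Remark~3.8]{CLPW} together with \cite{Keel}, and it uses \cite[Theorem~1.5]{CLPW} for (ii)$\Leftrightarrow$(iv) and for polynomiality when $3g+2n\le 24$. The trouble is that (ii)$\Rightarrow$(iii) is the only arrow leading out of (ii) in your cycle, and the argument you give for it does not work.

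Condition (ii) is a statement about the mixed Hodge structure on the cohomology of $\M_{g,n}$ over $\CC$. It carries no information about the Frobenius action on the $\ell$-adic cohomology of the reductions mod $p$. The weight spectral sequence only tells you that Frobenius eigenvalues on the $E_1$-terms have absolute value $q^k$, not that they equal $q^k$. Already $X=\mathrm{Spec}\,\ZZ[i]$ is a counterexample in general: $H^0(X_\CC)=\QQ^2$ is of Tate type, yet $\#X(\F_q)$ is $2$ or $0$ according as $q\equiv 1$ or $3\pmod 4$, which is not a polynomial.

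What does force eigenvalues $q^k$ is (i). Tautological classes are classes of algebraic cycles defined over $\ZZ$ on the smooth proper stacks $\overline{\M}_{g_v,n_v}$. By smooth proper base change and compatibility of cycle class maps, they span the $\ell$-adic $E_1$-page, so Frobenius acts there by $q^k$. Your Frobenius argument therefore proves (i)$\Rightarrow$(iii), but not (ii)$\Rightarrow$(iii). Also, \cite[Remark~3.8]{CLPW} does not package such a statement; the paper uses it only for tautological generation in the range $3g+2n\le24$.

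\textbf{How to close the cycle.} You need an implication out of (ii) that avoids Frobenius altogether. The paper takes (ii)$\Leftrightarrow$(iv) from \cite[Theorem~1.5]{CLPW} and (iii)$\Leftrightarrow$(iv) from Theorem~\ref{thm:main pointcounts}. Thus polynomiality is deduced from the numerical condition (iv), never from Tate type.

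Alternatively, you can prove (ii)$\Rightarrow$(iv) directly with this paper's results. By Poincar\'e duality, Tate type of $H^*(\M_{g,n})$ gives Tate type of $H^*_c(\M_{g,n})$. Hence $\chi_{11}$, $\chi_{15,0}$ and $\chi_{13}$ all vanish, and Theorem~\ref{thm:main nonvanishing} together with \cite[Corollary~1.8]{CLPW} forces (iv). Combined with your (iv)$\Rightarrow$(i)$\Rightarrow$(ii) and with (iii)$\Leftrightarrow$(iv), this completes the proof.
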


The computational verifications used in our proofs are highlighted in boxes below. Code for each verification is available in the following GitHub repository:

\smallskip 

\url{https://github.com/wilthoma/mgn_pointcounts2_code}

\subsection*{Acknowledgments}
We are especially grateful to Sam Canning and Hannah Larson for the many discussions in the course of our joint work that led to the formulation of the conjecture proved here.

\subsection*{Declaration of AI usage}
ChatGPT 5.6 (OpenAI) was used in this work both for generating candidate proof ideas and strategies, for arranging the various estimates, and for writing code to set up the exact computations underlying the arithmetic proof for small genus and the analytic proof for large genus. The code was written in Python and C++, using NumPy \cite{numpy}, FLINT \cite{flint} and Arb \cite{Johansson2017arb}. %
The authors assume responsibility for all content.
\FloatBarrier
\section{Polynomial point counts and Hodge--Deligne polynomials} \label{sec:pointcounts}
The \emph{Hodge--Deligne polynomial} of a complex algebraic variety or Deligne--Mumford stack $X$ records the Euler characteristics of the Hodge types in its compactly supported cohomology:
\[
 E(X;x,y):=
 \sum_{p,q}\Big(\sum_j(-1)^j
 \dim_{\CC}\gr_{p,q}\bigl(H_c^j(X)\otimes\CC\bigr)\Big)x^py^q.
\]
Here cohomology is taken with rational coefficients. In particular, the coefficient of $t^k$ in $E(X;t,t)$ is the weight-$k$ Euler characteristic, whereas the coefficient of $x^py^q$ in $E(X;x,y)$ is the Hodge type $(p,q)$ Euler characteristic.

A theorem of Katz \cite[Theorem~6.1.2(3)]{Katz2008} implies that if $X$ is a separated scheme of finite type over $\ZZ$ and $P(T)\in\ZZ[T]$ satisfies $\#X(\F_q)=P(q)$ for every finite field $\F_q$, then
\[
 E(X_{\CC};x,y)=P(xy).
\]

For $2g+n\ge3$, let $M_{g,n}$ be the coarse moduli space of $\M_{g,n}$, defined over $\ZZ$. Thus, by definition, $N_{g,n}(q)=\#M_{g,n}(\F_q)$. We also have
\[
 \#M_{g,n}(\F_q)=\#\M_{g,n}(\F_q),
\]
by \cite[Proposition~1.3(iii)]{BergstromFaberPayne}. Over $\CC$, the proper coarse moduli map induces an isomorphism on compactly supported rational cohomology, compatible with mixed Hodge structures. Thus, if $N_{g,n}$ is a polynomial $P$, $P\in\ZZ[T]$ \cite[pp.~616--617]{Katz2008}. \mbox{Katz's theorem gives}
\begin{equation}\label{eq:polynomial-hodge-deligne}
 E(\M_{g,n};x,y)=P(xy).
\end{equation}

\begin{proof}[\normalfont\bfseries Proof that Theorem~\ref{thm:main pointcounts} follows from Theorem~\ref{thm:main nonvanishing}]
We have $N_{0,n}(q)=1$ for $n\le2$ and $N_{1,0}(q)=q$. Polynomiality in the remaining cases with $g=0$ or $3g+2n\le24$ follows from \cite[Theorem~1.5]{CLPW}.

It remains to show that $N_{g,n}$ is not polynomial for $g\ge1$ and $3g+2n\ge25$. By \eqref{eq:polynomial-hodge-deligne}, it suffices to show that $\M_{g,n}$ has a nonzero Euler characteristic in odd weight or in Hodge type $(p,q)$ with $p\neq q$. Theorem~\ref{thm:main nonvanishing} gives the required nonvanishing for $(g,n)\notin\{(12,0),(8,1)\}$. For the two exceptional pairs, $\chi_{13}(\M_{g,n})\neq0$ by \cite[Corollary~1.8]{CLPW}.
\end{proof}

A rational mixed Hodge structure $H$ is \emph{of Tate type} if $\gr_{2k+1}H=0$ and $\gr_{2k}H$ is a direct sum of copies of $\QQ(-k)$ for every $k$. Here $\QQ(-k)$ denotes the one-dimensional rational Hodge structure of type $(k,k)$. We say that the cohomology of a variety or Deligne--Mumford stack is of Tate type if each of its cohomology groups is of Tate type.

The $E_1$-page of the weight spectral sequence for $\M_{g,n}\subset\overline{\M}_{g,n}$ has a description in terms of stable graphs with vertex decorations in the cohomology of moduli spaces of stable curves, modulo the signed identifications induced by graph automorphisms; see \cite[\S3.1]{CLPW}. It is \emph{generated by tautological classes} if it is spanned by graphs whose vertex decorations lie in the tautological rings. The tautological rings form the smallest collection of subrings of $H^*(\overline{\M}_{g,n})$ containing the unit classes and closed under pullback and pushforward along gluing and forgetful maps.

\begin{proof}[\normalfont\bfseries Proof that Corollary~\ref{cor:tautological-tate-pointcounts} follows from Theorem~\ref{thm:main pointcounts}]
First, (ii) and (iv) are equivalent by \cite[Theorem~1.5]{CLPW}. If (iv) holds, then (i) follows from \cite[Remark~3.8]{CLPW} when $3g+2n\leq24$, and from \cite{Keel} when $g=0$. Since tautological classes are algebraic, (i) implies that every term on the $E_1$-page is of Tate type. The same is therefore true of the weight-graded compactly supported cohomology, and (ii) follows by Poincar\'e duality. Finally, Theorem~\ref{thm:main pointcounts} gives the equivalence of (iii) and (iv).
\end{proof}

\section{Generating functions for \texorpdfstring{$\chi_{11}$ and $\chi_{15,0}$}{chi11 and chi15,0}}\label{sec:generating-functions}

\subsection{Generating-function formulas}
We recall the generating functions for the $\bbS_n$-equi\-variant compactly supported Euler characteristics of $\M_{g,n}$ in weight $11$ and Hodge type $(15,0)$. Both are built from a product of functions $U_\ell$, defined below, which were introduced by Songhafouo Tsopm\'en\'e and Turchin \cite{TsopmeneTurchin} and which package the all-genus weight-$0$ generating function; see Remark~\ref{rem:CFGP}.

The Bernoulli polynomials $B_r(x)$ and Bernoulli numbers $B_r$ are defined by
\begin{equation}\label{eq:Bernoulli}
 \frac{te^{xt}}{e^t-1}=\sum_{r\ge0}B_r(x)\frac{t^r}{r!},
 \qquad B_r:=B_r(0),
 \qquad\mbox{and}\qquad B_1=-\frac12.
\end{equation}
Set
\[
 B(z):=\sum_{r\ge2}\frac{B_r}{r(r-1)z^{r-1}},
 \qquad E_\ell:=\frac1\ell\sum_{d\mid\ell}\mu(\ell/d)u^{-d},
 \qquad\mbox{and}\qquad \lambda_\ell:=\ell u^\ell(1-u^\ell),
\]
where $\mu$ is the M\"obius function. We define $U_\ell(X,u)=\exp(\log U_\ell(X,u))$ by
\begin{equation} \label{equ:Uelldef} 
\resizebox{.92\hsize}{!}{
$\begin{aligned}
    \log U_\ell(X,u) 
  &=
        X\left(\log(\lambda_\ell E_\ell)-1 \right)+(-E_\ell+X-\textstyle{\frac 1 2} )\log(1-\textstyle{\frac X{E_\ell}}) +
         B(-E_\ell+X)- B(-E_\ell). 
         \end{aligned}
          $}
\end{equation}
For later coefficient calculations, it is useful to record the completely algebraic expansion
\begin{align}
 \log U_\ell(X,u)
 ={}&X\log(\lambda_\ell E_\ell)
 +\frac12\sum_{k\ge1}\frac{X^kE_\ell^{-k}}k
 -\sum_{k\ge2}\frac{X^kE_\ell^{-(k-1)}}{k(k-1)}\notag\\
 &-\sum_{\substack{r\ge2\\r\text{ even}}}
 \frac{B_r}{r(r-1)}
 \sum_{k\ge1}\binom{r+k-2}{k}X^kE_\ell^{-(r+k-1)}.
 \label{eq:Uell-expanded}
\end{align}
Since
\begin{equation}\label{equ:uellEell}
 \ell u^\ell E_\ell(u)
 =1+\sum_{\substack{d\mid\ell\\d<\ell}}
   \mu(\ell/d)u^{\ell-d},
\end{equation}
we may expand $E_\ell^{-1}$ by the geometric series and $E_\ell^{-s}$ for $s\ge2$ by the generalized binomial theorem.  In particular, all coefficients are rational and computable by finite arithmetic.

For a polynomial or power series in $w$, let $T_{\leq r}$ denote the sum of the coefficients of degrees at most $r$:
\[
T_{\leq r}\Bigl(\sum_k a_k w^k\Bigr)
:=
\sum_{k=0}^r a_k.
\]
The weight-$11$ generating function is given by the following formula; see \cite{PayneWillwacher24}.
\begin{equation}
\label{equ:ec11_withn}
\resizebox{.92\hsize}{!}{$
    \frac12 \displaystyle{\sum_{\substack{g,n\geq 0 \\ 2g+n\geq 3}}} u^{g+n} \chi^{\bbS_n}(\gr_{11}^W H^*_c(\M_{g,n}))
      =
      -u\  T_{\leq 10}\bigg(
      \prod_{\ell\geq 1} 
      \frac { 
        U_\ell(\frac 1 \ell \sum_{d\mid \ell} \mu(\ell/d) (-p_d  +1-w^d ), u )
      }
      { 
        U_\ell(\frac 1 \ell \sum_{d\mid \ell} \mu(\ell/d) (-p_d), u )
      }-1\bigg)\, .
      $}
\end{equation}
Similarly, for Hodge type $(15,0)$ we have (see \cite{CLPW2})
\begin{equation}
\label{equ:ec15_withn}
\resizebox{.92\hsize}{!}{$
    \displaystyle{\sum_{\substack{g,n\geq 0 \\ 2g+n\geq 3}}} u^{g+n} \chi^{\bbS_n}\big(\gr_{15,0}(H^*_c(\M_{g,n})\otimes\CC)\big)
      =
      -u\  T_{\leq 14}\bigg(
      \prod_{\ell\geq 1} 
      \frac { 
        U_\ell(\frac 1 \ell \sum_{d\mid \ell} \mu(\ell/d) (-p_d  +1-w^d ), u )
      }
      { 
        U_\ell(\frac 1 \ell \sum_{d\mid \ell} \mu(\ell/d) (-p_d), u )
      }-1\bigg)\, .
      $}
\end{equation}
\begin{rem}\label{rem:CFGP}
The functions $U_\ell$ of \eqref{equ:Uelldef} were introduced by Songhafouo Tsopm\'en\'e and Turchin \cite[Section~2.2]{TsopmeneTurchin}; see also \cite[Proposition~4.1]{PayneWillwacherEuler}. The product $$\prod_{\ell\geq1}U_\ell\Big(\frac1\ell\sum_{d\mid\ell}\mu(\ell/d)\,p_d,u\Big)$$ is the $\bbS_n$-equivariant Euler characteristic of the graph complex computing weight-$0$ compactly supported cohomology of $\M_{g,n}$, with disconnected graphs allowed \cite[Corollary~4.3]{PayneWillwacherEuler}. For each fixed $g\geq 2$, the weight-$0$ generating function $z_g$ for connected graphs is a finite Laurent polynomial in the inhomogeneous power sums $P_d=1+p_d$; see \cite[Remark~1.6]{CFGP}. The numerators in \eqref{equ:ec11_withn} and \eqref{equ:ec15_withn} mark one vertex by replacing $p_d$ with $p_d-1+w^d$, and the truncations $T_{\leq 10}$ and $T_{\leq 14}$ select the part of that vertex carrying the weight-$11$, respectively Hodge type $(15,0)$, class.
\end{rem}

From the $\bbS_n$-equivariant Euler characteristics one may obtain the non-equivariant Euler characteristics by evaluating the symmetric group character encoded by the symmetric function on the conjugacy class of the identity permutation.
After setting $p_1=p$ and $p_d=0$ for $d\geq 2$, the non-equivariant Euler characteristic is recovered by multiplying the coefficient of $p^n$ by $n!$. This gives the following formulas.

\begin{cor}\label{cor:EC formula}
The non-equivariant Euler characteristics are given by
\begin{equation}
\label{equ:ec11_non}
\resizebox{.92\hsize}{!}{$
    \frac12 \displaystyle{\sum_{\substack{g,n\geq 0 \\ 2g+n\geq 3}}} 
     u^{g+n} \frac{p^n}{n!} \chi(\gr_{11}^W H^*_c(\M_{g,n}))
      =
      -u\  T_{\leq 10}\bigg(
      \prod_{\ell\geq 1} 
      \frac { 
        U_\ell(\frac 1 \ell \sum_{d\mid \ell} \mu(\ell/d) (1-w^d ) -\frac {\mu(\ell)} \ell p, u )
      }
      { 
        U_\ell(-\frac {\mu(\ell)} \ell p, u )
      }-1\bigg)\, ,
      $}
\end{equation}
and
\begin{equation}
\label{equ:ec15_non}
\resizebox{.92\hsize}{!}{$
    \displaystyle{\sum_{\substack{g,n\geq 0 \\ 2g+n\geq 3}}} 
    u^{g+n} \frac{p^n}{n!} \chi\big(\gr_{15,0}(H^*_c(\M_{g,n})\otimes\CC)\big)
      =
      -u\  T_{\leq 14}\bigg(
      \prod_{\ell\geq 1} 
      \frac { 
        U_\ell(\frac 1 \ell \sum_{d\mid \ell} \mu(\ell/d) (1-w^d ) -\frac {\mu(\ell)} \ell p, u )
      }
      { 
        U_\ell(-\frac {\mu(\ell)} \ell p, u )
      }-1\bigg)\, .
      $}
\end{equation}
\end{cor}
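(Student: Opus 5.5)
The plan is to specialize the equivariant identities \eqref{equ:ec11_withn} and \eqref{equ:ec15_withn} along the ring homomorphism $\Lambda\to\QQ[p]$ that sends $p_1\mapsto p$ and $p_d\mapsto 0$ for $d\ge2$, applied coefficientwise in $u$ and $w$. Both sides of each identity are power series in $u$ whose coefficients are polynomials in $w$ with values in (a completion of) the ring of symmetric functions. The truncations $T_{\le10}$ and $T_{\le14}$ act only on the $w$-variable, so they commute with this specialization. Since $u$ carries the grading, each $u$-coefficient involves only finitely many power sums, and the substitution is therefore well defined on both sides.

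For the left-hand sides, I will use the standard fact that the Frobenius characteristic of an $\bbS_n$-representation $V$ is
\[
\mathrm{ch}(V)=\frac{1}{n!}\sum_{\sigma\in\bbS_n}\chi_V(\sigma)\,p_{\lambda(\sigma)} .
\]
Setting $p_d=0$ for $d\ge2$ kills every term except the one for $\sigma=\mathrm{id}$. That term is $\frac{\dim V}{n!}p^n$. This is additive, so it passes to virtual representations and hence to equivariant Euler characteristics. Consequently $\chi^{\bbS_n}(\cdots)$ specializes to $\frac{p^n}{n!}\chi(\cdots)$, which is exactly the left-hand side of \eqref{equ:ec11_non} and \eqref{equ:ec15_non}.

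For the right-hand sides, I substitute directly into the arguments of the $U_\ell$. In the denominator, $\frac1\ell\sum_{d\mid\ell}\mu(\ell/d)(-p_d)$ becomes $-\frac{\mu(\ell)}{\ell}p$, because only $d=1$ survives and it carries the coefficient $\mu(\ell)$. In the numerator, the terms $\frac1\ell\sum_{d\mid\ell}\mu(\ell/d)(1-w^d)$ involve no power sums and are unchanged, while the $-p_d$ part again contributes $-\frac{\mu(\ell)}{\ell}p$. This yields the stated arguments.

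The only step needing care is the justification that substitution commutes with the infinite product over $\ell$ and with $\exp$ and $\log$. My argument for this is as follows.
\begin{itemize}
\item By \eqref{eq:Uell-expanded}, $\log U_\ell$ is a power series in $X$ with coefficients in $\QQ((u))$-type expressions in $E_\ell^{-1}$, and \eqref{equ:uellEell} shows $E_\ell^{-1}=O(u^\ell)$.
\item The ratio of numerator to denominator for each $\ell$ therefore differs from $1$ only in high $u$-degree. Its logarithm has no $X$-independent term, since the terms $\log(\lambda_\ell E_\ell)X$ and $X$ are linear in the argument.
\item Hence each $u$-coefficient of the product is a finite expression, and the substitution, being a continuous ring homomorphism, commutes with all the operations involved.
\end{itemize}
I would spend a sentence confirming the $u$-adic convergence of the factors for large $\ell$. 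I expect this convergence check to be the main (mild) obstacle; everything else is formal.
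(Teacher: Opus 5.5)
Your proposal is correct and is the paper's argument: apply the specialization $p_1\mapsto p$, $p_d\mapsto 0$ for $d\ge2$, so that only the identity class survives in the Frobenius characteristic (giving $\frac{p^n}{n!}\chi$) and the arguments of $U_\ell$ become exactly those stated. Your added $u$-adic convergence check is fine and is essentially the estimate in the proof of Lemma~\ref{lem:ell range}; note, though, that the linear term $W_\ell\log(\lambda_\ell E_\ell)$ only has $u$-order at least $\lceil\ell/2\rceil$ (from the proper divisors in $\Delta_\ell$), so $E_\ell^{-1}=O(u^\ell)$ alone does not cover it.
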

To unify \eqref{equ:ec11_non} and \eqref{equ:ec15_non}, we introduce the notation
\begin{align}\label{equ:cGamma def}
    c^{(\Gamma)}_{g,n}
    :=
    [u^{g+n}p^n] 
    \left(-u\  T_{\leq \Gamma}\bigg(
      \prod_{\ell\geq 1} 
      \frac { 
        U_\ell(\frac 1 \ell \sum_{d\mid \ell} \mu(\ell/d) (1-w^d ) -\frac {\mu(\ell)} \ell p, u )
      }
      { 
        U_\ell(-\frac {\mu(\ell)} \ell p, u )
      }-1\bigg) \right)\, ,
\end{align}
where $[u^{g+n}p^n](\cdots)$ denotes the coefficient of $u^{g+n}p^n$ in the expression $(\cdots)$. Set
\[
 a_{g,n}:=c^{(10)}_{g,n} \qquad\mbox{and}\qquad b_{g,n}:=c^{(14)}_{g,n}.
\]
Then, by Corollary~\ref{cor:EC formula},
\[
 \chi(\gr_{11}^W H^*_c(\M_{g,n}))=2n!a_{g,n}
 \qquad\mbox{and}\qquad
 \chi\big(\gr_{15,0}(H^*_c(\M_{g,n})\otimes\CC)\big)=n!b_{g,n}.
\]

By these identities, Theorem~\ref{thm:main nonvanishing} is equivalent to showing that $a_{g,n}$ and $b_{g,n}$ do not vanish simultaneously in the stated range. We use $c^{(\Gamma)}_{g,n}$ to treat the two Euler characteristics together.

Finally, we note that for each fixed $g$ and $n$ the (a priori) infinite products over $\ell$ above may be restricted to a finite range.
\begin{lem}\label{lem:ell range}
Let $g\geq 1$ and $n\geq 0$. Then for computing the coefficient of $u^{g+n}p^n$ in \eqref{equ:ec11_non}, \eqref{equ:ec15_non} or  \eqref{equ:cGamma def} the infinite product over $\ell$ may be restricted to the range $1\leq \ell \leq \max \{1,2g-2\}$.
\end{lem}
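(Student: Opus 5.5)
The plan is a grading argument. I would show that for every $\ell>\max\{1,2g-2\}$ the factor
\[
R_\ell:=\frac{U_\ell(X_0+Y,u)}{U_\ell(Y,u)},
\qquad
X_0:=\frac1\ell\sum_{d\mid\ell}\mu(\ell/d)(1-w^d),
\qquad
Y:=-\frac{\mu(\ell)}{\ell}p,
\]
equals $1$ plus terms that cannot contribute to the coefficient of $u^{g+n}p^n$. Since $T_{\leq\Gamma}$ is linear and acts only on $w$, it suffices to prove this before truncating.

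Assign to a monomial $u^ap^b w^c$ the \emph{excess} $a-b$. The target monomial $u^{g+n}p^n$ has excess $g$. The prefactor $-u$ has excess $1$, so the relevant part of the product has excess $g-1$.

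\textbf{Step 1: expand $\log R_\ell$.} I would write
\[
\log R_\ell=\log U_\ell(X_0+Y,u)-\log U_\ell(Y,u)
\]
using \eqref{eq:Uell-expanded}. Every surviving term is a monomial of the form $X^kE_\ell^{-m}$ (or the linear term $X\log(\lambda_\ell E_\ell)$), with $X=X_0+Y$ expanded and at least one factor $X_0$ present, because the pure-$Y$ parts cancel. Each $X_0$ carries no $p$, and $Y$ carries $p^1$ and no $u$.

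\textbf{Step 2: lower-bound the $u$-order of the building blocks.} By \eqref{equ:uellEell},
\[
E_\ell^{-1}=\ell u^\ell\Bigl(1+\sum_{d\mid\ell,\,d<\ell}\mu(\ell/d)u^{\ell-d}\Bigr)^{-1},
\]
so $E_\ell^{-m}$ has $u$-order at least $\ell m$. Also
\[
\log(\lambda_\ell E_\ell)=\log(1-u^\ell)+\log\bigl(\ell u^\ell E_\ell\bigr),
\]
and this has $u$-order at least $\ell-\ell/p_{\min}\geq\ell/2$, where $p_{\min}$ is the smallest prime divisor of $\ell$ (for $\ell\geq2$).

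\textbf{Step 3: bound the excess of each term in $\log R_\ell$.} A term $X^kE_\ell^{-m}$ that contains at least one $X_0$ has $p$-degree at most $k-1$. This gives the following lower bounds on excess:
\begin{itemize}
\item the linear term: at least $\ell/2$;
\item $\tfrac12X^kE_\ell^{-k}/k$: at least $\ell k-(k-1)\geq\ell$;
\item $X^kE_\ell^{-(k-1)}$ with $k\geq2$: at least $(\ell-1)(k-1)\geq\ell-1\geq\ell/2$;
\item the Bernoulli terms: at least $\ell(r+k-1)-(k-1)\geq\ell$.
\end{itemize}
Hence every monomial of $\log R_\ell-0$ has excess at least $\ell/2$ when $\ell\geq2$. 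The same computation with $\ell/2$ replaced by $0$ shows that every factor $R_{\ell'}$, including $\ell'=1$ with $X_0=1-w$, has only monomials of nonnegative excess.

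\textbf{Step 4: conclude.} In the product over all $\ell$, no factor can lower the excess. So any monomial that uses a non-constant term of some $R_\ell$ with $\ell\geq 2g-1$ has excess at least $\ell/2>g-1$. Such a monomial cannot appear in the coefficient of $u^{g+n}p^n$. Therefore the factors with $\ell>\max\{1,2g-2\}$ may be replaced by $1$. For $g=1$ the bound $\ell/2>0=g-1$ holds for all $\ell\geq2$, which matches the case $\max\{1,2g-2\}=1$.

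The main obstacle is the bookkeeping in Step 3. One has to make sure that the pure-$Y$ parts really cancel between numerator and denominator, so that each surviving term has $p$-degree strictly smaller than $k$. One also has to check that the linear $\log(\lambda_\ell E_\ell)$ term, which gives the weakest bound $\ell/2$, is exactly what produces the threshold $2g-2$.
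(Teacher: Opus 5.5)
Your proposal is correct and is essentially the paper's proof. The paper also grades $u^ap^b$ by $a-b$ and expands the logarithm of each factor via \eqref{eq:Uell-expanded}. It bounds the logarithmic term by $u$-order at least $\lceil\ell/2\rceil$ (proper divisors are at most $\ell/2$), and the other terms by the cancellation of the top power of $p$ together with the $u$-order $\ell s$ of $E_\ell^{-s}$ for $s\ge\max\{1,k-1\}$, giving degree at least $\ell s-(k-1)\ge\ell-1$. So every nonconstant term of the $\ell$th factor has degree at least $\lceil\ell/2\rceil$ for $\ell\ge2$, all factors have nonnegative degree, and the prefactor $-u$ finishes the argument; your term-by-term list in Step 3 is an unpacked version of that uniform estimate.
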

\begin{proof}
    Assign $u^ap^b$ auxiliary degree $a-b$. We will show that the factors have no terms of negative auxiliary degree, and that every nonconstant term in the $\ell$th factor has auxiliary degree at least $\lceil\ell/2\rceil$ for $\ell\geq 2$. The prefactor $-u$ then gives the stated bound.

    We consider the logarithm of the $\ell$th term, 
    \[
    \log U_\ell\bigg(\frac 1 \ell \sum_{d\mid \ell} \mu(\ell/d) (1-w^d ) -\frac {\mu(\ell)} \ell p,u\bigg)
    -
    \log U_\ell\left( -\frac {\mu(\ell)} \ell p,u\right)
    ,
    \]
    and expand each summand using \eqref{eq:Uell-expanded}. 
    The first (logarithmic) term of \eqref{eq:Uell-expanded} contributes a $p$-independent term to the difference, which is furthermore of $u$-order at least $\lceil \ell/2\rceil$ since
    \[
 \lambda_\ell E_\ell(u)
 =(1-u^\ell) \bigg( 1+\sum_{\substack{d\mid\ell\\d<\ell}}
   \mu(\ell/d)u^{\ell-d} \bigg),
\]
and every proper divisor $d$ is at most $\ell/2$.

So we are left with studying the contributions of the remaining terms of \eqref{eq:Uell-expanded}, which are all multiples of $X^kE_\ell^{-s}$ with $s\geq \max\{1,k-1\}$. From \eqref{equ:uellEell} we see that $E_\ell^{-s}$ is of $u$-order $\ell s$. 
The factor $X^k$ can contribute terms of $p$-degree $k$, but the top powers cancel due to the subtraction, and hence the maximal contributed $p$-degree is $k-1$.
Hence any resulting monomial $u^ap^b$ has auxiliary degree
\[
a-b\geq \ell s - (k-1) \geq \ell-1 \geq \lceil \ell/2\rceil,
\]
with the last inequality true for $\ell\geq 2$.
The argument also shows that the factor with $\ell=1$ only contributes monomials of non-negative auxiliary degree. Exponentiation and products add auxiliary degrees, so the lemma follows.
\end{proof}

\subsection{Separating the $\ell=1$ factor}
For later use, we now rewrite and reorganize \eqref{equ:cGamma def}. Set
\[
 z:=-up \qquad\mbox{and}\qquad \tau:=\frac{u}{1-z}.
\]

Define the auxiliary function 
\begin{equation}\label{eq:H-definition}
\Hcal(\tau, w) := \exp\left( -\sum_{r\geq 1} \frac{B_{r+1}(1-w)-B_{r+1}}{r(r+1)}\tau^r\right),
\end{equation}
and denote its $h$th Taylor coefficient in $\tau$ by $H_h(w)$, so that 
\[
\Hcal(\tau, w) = \sum_{h\geq 0} H_h(w) \tau^h.
\]
The Bernoulli polynomials \eqref{eq:Bernoulli} satisfy \(B_{r+1}(1)=B_{r+1}\) for \(r\ge1\), and hence
\[
 H_0(w)=1,
 \qquad
 \Hcal(\tau,0)=1,
 \qquad\mbox{and}\qquad
 H_h(0)=0\quad(h\ge1).
\]

Furthermore, define 
\begin{equation}\label{eq:Rcal-def}
 \Rcal(u,z,w):=(1-u)^{1-w}
 \prod_{\ell\ge2}
 \frac{U_\ell( \frac 1 \ell \sum_{d\mid \ell} \mu(\ell/d) (1-w^d ) +\frac{\mu(\ell)}{\ell}z/u,u)}{U_\ell(\frac{\mu(\ell)}{\ell} z/u,u)},
\end{equation}
and denote its Taylor coefficients by $R_{m,j}(w)\in\Q[w]$, so that
\begin{equation}\label{eq:R-expansion}
 \Rcal(u,z,w)=\sum_{m\ge0}\sum_{j=0}^{m}R_{m,j}(w)u^mz^j
 \qquad\mbox{and}\qquad R_{0,0}=1.
\end{equation}

Note that the factor with $\ell=1$ in \eqref{equ:cGamma def} is
\[
 (1-z)^{1-w}(1-u)^{1-w}\Hcal\!\left(\frac{u}{1-z},w\right),
\]
so the entire product factors as
\[
  (1-z)^{1-w}\Hcal\!\left(\frac{u}{1-z},w\right)\Rcal(u,z,w)
\]
and we obtain 
\begin{equation}
    \label{equ:series expanded}
\sum_{g,n} (-1)^{n+1} c^{(\Gamma)}_{g,n} u^{g-1}z^n = \Trunc{\Gamma} \biggl((1-z)^{1-w}\Hcal\!\left(\frac{u}{1-z},w\right)\Rcal(u,z,w) -1\biggr).
\end{equation}

To see the support condition $j\le m$ in \eqref{eq:R-expansion}, consider a term of degree $k$ in $X$ from \eqref{eq:Uell-expanded}. For us, only the difference of \eqref{eq:Uell-expanded} evaluated at $X=\frac 1\ell \sum_{d\mid\ell}\mu(\ell/d)(1-w^d)+\mu(\ell)z/(u\ell)$ and the evaluation at $X=\mu(\ell)z/(u\ell)$ is relevant. 
In this difference, a surviving monomial of $z$-degree $j$ has $j\le k-1$ and contributes $u^{-j}$.  The first sum in negative powers of $E_\ell$ contains $E_\ell^{-k}$, of $u$-valuation $\ell k$, while the second contains $E_\ell^{-(k-1)}$, of valuation $\ell(k-1)$.  Since $\ell\ge2$, the resulting $u$-degree is at least $j$ in both cases.  The Bernoulli terms have still larger valuation, and the linear logarithmic term has $z$-degree zero after subtraction.  The inequality $j\le m$ is preserved by products and exponentiation.

Let $[z^k](\cdots)$ again denote the coefficient of $z^k$ in the formal power series $(\cdots)$.
Then we record for later use:
\begin{prop}\label{prop:raw}
Extend \(c^{(\Gamma)}_{g,n}\) by zero when \(2g+n<3\), and let
\(\delta_{r,s}\) denote the Kronecker delta.  For \(g\ge1\) and \(n\ge0\),
\begin{align}\label{eq:raw-first}
 (-1)^{n+1}c^{(\Gamma)}_{g,n}
 ={}&\sum_{m=0}^{g-1}\sum_{j=0}^{\min(m,n)}
 \Trunc{\Gamma}\!\Bigl(
 R_{m,j}(w)H_{g-1-m}(w)
 [z^{n-j}](1-z)^{2-g+m-w}
 \Bigr) -\delta_{g,1}\delta_{n,0}.
\end{align}
\end{prop}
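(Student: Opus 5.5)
The plan is to read off coefficients directly from the factored identity \eqref{equ:series expanded}. Its left side is $\sum_{g,n}(-1)^{n+1}c^{(\Gamma)}_{g,n}u^{g-1}z^n$. We first check that this identity is exactly what one gets from \eqref{equ:cGamma def} after the substitution $z=-up$. The coefficient of $u^{g+n}p^n$ in $-u\,F(u,p)$ equals the coefficient of $u^{g-1}p^n$ in $-F$ after dividing by $u^{n+1}$. Since $p^n u^n=(-1)^n z^n$, this becomes the coefficient of $u^{g-1}z^n$ in $(-1)^{n+1}F$ with $F$ rewritten in $(u,z)$.

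For this to make sense, $F$ must be a power series in $u$ and $z$ and not merely a Laurent series in $u$. This follows from the support condition $j\le m$ for $\Rcal$ and from the explicit form of the $\ell=1$ factor. The $-1$ inside $\Trunc{\Gamma}(\cdots-1)$ contributes only to $u^0z^0$, that is, to $(g,n)=(1,0)$, and this gives the term $-\delta_{g,1}\delta_{n,0}$. The convention of extending $c^{(\Gamma)}$ by zero when $2g+n<3$ handles the only other low case, and this needs a brief check: $(g,n)=(1,0)$ is the only pair with $g\ge1$ excluded by $2g+n\ge3$. We verify there that the identity is consistent, namely that the product's constant term is $1$, so the right side of \eqref{eq:raw-first} vanishes.

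Next we expand the product. We write
\[
\Hcal\!\left(\tfrac{u}{1-z},w\right)=\sum_h H_h(w)\,u^h(1-z)^{-h},
\qquad
\Rcal=\sum_{m,j}R_{m,j}(w)u^mz^j.
\]
The coefficient of $u^{g-1}$ then collects the pairs with $h+m=g-1$, that is $h=g-1-m$ for $0\le m\le g-1$. Multiplying by $(1-z)^{1-w}$ gives the total $z$-dependent factor
\[
z^j(1-z)^{1-w-(g-1-m)}=z^j(1-z)^{2-g+m-w}.
\]
Extracting $[z^n]$ yields $[z^{n-j}](1-z)^{2-g+m-w}$, which is nonzero only for $j\le n$. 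Combined with $j\le m$, this gives the range $0\le j\le\min(m,n)$.

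Finally, $\Trunc{\Gamma}$ is linear over coefficients that do not involve $w$, so it commutes with extracting coefficients in $u$ and $z$. This is because each coefficient of $u^az^b$ is a polynomial in $w$, as are $R_{m,j}$, $H_h$ and the binomial coefficients of $(1-z)^{s-w}$. Hence $\Trunc{\Gamma}$ may be applied termwise, giving \eqref{eq:raw-first}.

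The main point needing care is the justification of \eqref{equ:series expanded} itself, specifically that the $\ell=1$ factor equals $(1-z)^{1-w}(1-u)^{1-w}\Hcal(u/(1-z),w)$. To establish this, we would evaluate \eqref{equ:Uelldef} at $\ell=1$, where $E_1=u^{-1}$ and $\lambda_1=u(1-u)$. We take the difference at $X=1-w+z/u$ and at $X=z/u$, and use $-E_1+X=(z-1)/u$ together with the Bernoulli-polynomial identity
\[
B(a+x)-B(a)\;\longleftrightarrow\;\sum_r \frac{B_{r+1}(x)-B_{r+1}}{r(r+1)}a^{-r},
\]
which comes from the Stirling-type expansion of $\log\Gamma$. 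Here $a=(z-1)/u$, so $a^{-1}=-u/(1-z)=-\tau$, and signs must be tracked carefully. The logarithmic terms should produce $(1-w)\log(1-u)$ and $(1-w)\log(1-z)$ after cancellation. The remaining bookkeeping is then purely formal. If the paper already regards \eqref{equ:series expanded} as established, the proposition reduces to the coefficient extraction above.
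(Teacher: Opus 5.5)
Your proposal is correct and follows the paper's argument: the paper's proof consists only of collecting the coefficient of $u^{g-1}z^n$ on the right-hand side of \eqref{equ:series expanded}, and your expansion, with $h=g-1-m$, exponent $2-g+m-w$, and $[z^n]z^j(\cdot)=[z^{n-j}](\cdot)$, does exactly that. Your extra checks (the signs under $z=-up$, the $(1,0)$ term, and the sketch of the $\ell=1$ factor, which the paper simply asserts) supply details the paper leaves implicit; one correction to that sketch is that the logarithmic terms leave the remainder $(a+\tfrac12-w)\log\bigl(1+\tfrac{1-w}{a}\bigr)-(1-w)$, which does not cancel but combines with $B(a+1-w)-B(a)$ through the generalized Stirling series to give $\log\Hcal(\tau,w)$.
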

\begin{proof}
This follows by collecting the coefficient of $u^{g-1}z^n$ on the right-hand side of \eqref{equ:series expanded}.
\end{proof}

\section{Arithmetic proof of nonvanishing for \texorpdfstring{$g\leq 2000$}{g <= 2000}}
\label{sec:mainA}
In this section, we prove the following slight strengthening of Theorem~\ref{thm:main nonvanishing} for $g\leq 2000$.

\begin{thm}\label{thm:mainA}
Let $1\leq g\leq 2000$ and $n\geq 0$.
    If $3g+2n\geq 25$ and $(g,n)\neq (12,0), (8,1)$, then 
    $\chi_{11}(\M_{g,n})\neq 0$.
\end{thm}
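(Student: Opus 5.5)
Since $\chi_{11}(\M_{g,n})=2n!\,a_{g,n}$, it suffices to show $a_{g,n}=c^{(10)}_{g,n}\neq 0$ in the stated range. The plan starts from Proposition~\ref{prop:raw} with $\Gamma=10$. For fixed $g$, it makes the dependence on $n$ explicit. The coefficient $[z^{n-j}](1-z)^{2-g+m-w}$ equals $\rise{g-2-m+w}{n-j}/(n-j)!$, a polynomial in $w$ whose coefficients are rational functions of $n$. Because $T_{\leq 10}$ only sees $w$-degrees up to $10$, and the polynomials $R_{m,j}(w)H_{g-1-m}(w)$ are computable by finite arithmetic (Lemma~\ref{lem:ell range}), the sum collapses to a finite combination.

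Concretely, I would expand $\rise{x+w}{N}$ in powers of $w$ around the integer $x=g-2-m$. After multiplying by $(n)!$, $(-1)^{n+1}n!\,a_{g,n}$ becomes a finite sum of terms of the form
\[
 (\text{rational constant depending on } g)\cdot \fall{n}{j}\cdot \rise{g-2-m}{n-j}\cdot P_{k}(n),
\]
where the $P_k$ are sums of reciprocals arising from $w$-derivatives of rising factorials (harmonic-type sums). The cleaner route is to group by the integer $s=g-2-m$. When $s\le 0$ the rising factorial vanishes for large $n$ except through the $w$-derivative terms. So for $n$ large compared with $g$, I expect a dominant structure like $(n+s_{\max}-1)!$ times a rational function of $n$, with only finitely many shapes $(n+r)!/\text{(small factorials)}$ appearing for $|r|\le g$.

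The arithmetic step is as follows. Clear denominators, multiplying by a fixed integer $D_g$ depending only on $g$, so that $D_g\,n!\,a_{g,n}$ is an integer combination $\sum_{r} \alpha_r(n)\,(n+r)!$ with $r$ in a window of length $O(g)$ and $\alpha_r$ polynomials in $n$ of bounded degree. Then pick a prime $p$ with $p>D_g$, with $p$ larger than the coefficients involved, and with $n+r_0< p\le n+r_1$ for the right pair of indices. Such a $p$ divides every term except the one with the smallest factorial, and does not divide that one, forcing nonvanishing. By Bertrand/Nagura/Dusart-type bounds on $\pi(x)$, there is a prime in $(x, x(1+\epsilon)]$ for $x\ge x_0(\epsilon)$. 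This is needed only in an interval of length comparable to $g$ near $n$, which holds once $n$ exceeds an explicit threshold such as $50\,000$ for $g\le 2000$, because the gap needed is a bounded multiple of $g$ while prime gaps near $x$ are at most $x/(25\log^2x)$. I must also check that the lowest-factorial coefficient $\alpha_{r_0}(n)$ is nonzero modulo $p$; since it is a fixed polynomial of bounded degree, it has finitely many roots, which can be excluded or handled by computation.

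For $n$ below the threshold, and all $g\le 2000$, I would compute $a_{g,n}$ exactly, using the expansion \eqref{eq:raw-first} with exact rational arithmetic in FLINT and restricting the product via Lemma~\ref{lem:ell range}. The only zeros in $3g+2n\ge25$ should be $(12,0)$ and $(8,1)$. The main obstacle is making the "prime in a short interval" step genuinely work. One must identify correctly which factorial term is isolated, since the leading candidate coefficient may vanish identically for particular $g$, in which case the next index must be used. One must also ensure that the denominators $D_g$ and the polynomial coefficients do not swallow the prime, and that the explicit prime-gap bound kicks in below the computational cutoff. I would first try taking $p$ just above $n$, so that $p\nmid n!$ is the only surviving term. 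If that fails, I would fall back to choosing $p$ in $(n, n+g]$ to isolate the lowest rising factorial.
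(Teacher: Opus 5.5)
There is a genuine gap at the arithmetic step. The structure you posit, $D_g\,n!\,a_{g,n}=\sum_r\alpha_r(n)(n+r)!$ with $D_g$ independent of $n$ and each $\alpha_r$ a polynomial, is false. The truncation $\Trunc{10}$ keeps the $w$-degrees up to $10$ of $(1-z)^{e-w}=(1-z)^e\exp\bigl(w\log\frac1{1-z}\bigr)$. So $n!\,a_{g,n}$ is built from the numbers $[X^k]\rise{X-e}{n}$ with $k\ge1$, for example $[X]\rise{X+1}{n}=n!H_n$. These are Stirling-type numbers. The harmonic sums you identified yourself have denominators that grow with $n$, and they are not polynomial multiples of factorials. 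More seriously, these terms do not vanish modulo a prime $p\le n$: for $n/2<p\le n$ one has $n!H_n\equiv n!/p\equiv-(n-p)!\not\equiv0\pmod p$. So the mechanism ``$p$ divides every term but one'' has nothing to stand on. Even for honest factorial terms $(n+r)!$ with consecutive $r$, isolating the smallest one would require a prime exactly at $n+r_0+1$, which no prime-gap bound supplies. Your threshold $n\ge50000$ is likewise unsupported.

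The missing idea is exactly how these terms behave modulo $p$. The paper uses the finite expansion $\Fcal_g=\sum_{e=2-g}^{g}C_{g,e}(w)(1-z)^{e-w}$ together with $n![z^n](1-z)^{e-X}=\rise{X-e}{n}$. It then observes that any $p$ consecutive linear factors satisfy $\prod_{j=0}^{p-1}(X-e+n+j)\equiv X^p-X\pmod p$. Take $p>\max(10,2g)$; the bound $p>2g$ makes all denominators of the $C_{g,e}$ invertible. Writing $n=qp+s$ then gives $n!D_{10}(g,n)\equiv(-1)^q s!D_{10-q}(g,s)\pmod p$, so each period lowers the $X$-degree by one instead of killing the term. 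Choosing $q=9$ lands on $D_1$. For $g\ge3$, with $h=\lfloor(g-1)/2\rfloor$ and $s>g$, it has the closed form $s!D_1(g,s)=(-1)^{g+1}\frac{B_{2h}}{2h}\rise{2h-1}{s}$. This is nonzero mod $p$ once $s+2h-2<p$ and $p$ does not divide the numerator of $B_{2h}/2h$. So one only needs a prime in the long window $\bigl((n+2h-2)/10,(n-g)/9\bigr)$, of length about $n/90$, avoiding boundedly many bad primes. Dusart's bounds supply such a prime for $n\ge2\cdot10^6$. A prime-cover search handles $50000\le n<2\cdot10^6$, and a direct computation modulo $2013265921$ handles $n<50000$. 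By contrast, a prime just below $n$ corresponds to $q=1$ and only reduces you to $D_9(g,n-p)$, for which no closed form is available.
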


\subsection{Finite expansion}
For \(g\ge1\), define
\begin{equation}\label{eq:Fg-def}
 \Fcal_g(z,w)
 :=[u^{g-1}]
 \biggl(
 (1-z)^{1-w}\Hcal\!\Bigl(\frac{u}{1-z},w\Bigr)
 \Rcal(u,z,w)
 \biggr).
\end{equation}
Then \eqref{eq:raw-first} becomes
\begin{equation}\label{eq:c-via-Fg}
 (-1)^{n+1}c^{(\Gamma)}_{g,n}
 =\Trunc{\Gamma}[z^n]\Fcal_g(z,w)-\delta_{g,1}\delta_{n,0}.
\end{equation}

\noindent A priori, the expressions $\Fcal_g(z,w)$ are infinite series in $z$. They become finite sums, however, when written in terms of $(1-z)^{e-w}$. This is a direct analogue, after forgetting the $\bbS_n$-action, of the fixed-genus weight-$0$ formulas conjectured by Zagier and proved in \cite[Theorem~1.1]{CFGP}. For each $g\geq 2$, those formulas express the generating function as a finite Laurent polynomial in the inhomogeneous power sum symmetric functions. See Remark~\ref{rem:finite-weight0}.

\begin{prop}\label{prop:finite-singular}
For each $g\geq 1$, we have the expansion
\begin{equation}\label{eq:Fg-singular}
 \Fcal_g(z,w)=\sum_{e=2-g}^{g}C_{g,e}(w)(1-z)^{e-w},
\end{equation}
where each $C_{g,e}(w)$ is a polynomial in $\Q[w]$. More explicitly,
\begin{equation}\label{eq:Cge-explicit}
 C_{g,e}(w)
 =\sum_{\substack{0\le m\le g-1\\0\le j\le m\\0\le\ell\le j\\
                   2-g+m+\ell=e}}
 (-1)^\ell\binom{j}{\ell}
 R_{m,j}(w)H_{g-1-m}(w).
\end{equation}
\end{prop}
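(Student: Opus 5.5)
We expand the definition \eqref{eq:Fg-def} directly and extract the coefficient of $u^{g-1}$. Using $\Hcal(\tau,w)=\sum_h H_h(w)\tau^h$ with $\tau=u/(1-z)$, and \eqref{eq:R-expansion} for $\Rcal$, the product inside \eqref{eq:Fg-def} equals
\[
 \sum_{h\ge0}\sum_{m\ge0}\sum_{j=0}^{m} H_h(w)R_{m,j}(w)\,u^{h+m}z^j(1-z)^{1-w-h}.
\]
The coefficient of $u^{g-1}$ forces $h=g-1-m$ with $0\le m\le g-1$, so
\[
 \Fcal_g(z,w)=\sum_{m=0}^{g-1}\sum_{j=0}^{m}R_{m,j}(w)H_{g-1-m}(w)\,z^j(1-z)^{2-g+m-w}.
\]
This is a finite sum, because the support condition $j\le m$ established after \eqref{eq:R-expansion} bounds the $z$-degree of each coefficient of $\Rcal$. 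This reorganization is legitimate as an identity of formal power series in $u,z$ with coefficients in $\Q[w]$, where $(1-z)^{a-w}$ is understood via the binomial series. Extracting $[z^n]$ of this sum recovers Proposition~\ref{prop:raw}, which is a useful consistency check.

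The key step is to rewrite $z^j$ in terms of powers of $(1-z)$. Writing $z=1-(1-z)$ and applying the binomial theorem,
\[
 z^j=\sum_{\ell=0}^{j}(-1)^\ell\binom{j}{\ell}(1-z)^\ell .
\]
Substituting, each term becomes $(-1)^\ell\binom{j}{\ell}R_{m,j}(w)H_{g-1-m}(w)(1-z)^{2-g+m+\ell-w}$. Collecting by the exponent $e=2-g+m+\ell$ yields exactly \eqref{eq:Cge-explicit}. The coefficients are polynomials in $w$ because $R_{m,j}$ and $H_h$ are.

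It remains to check the range of $e$. Since $m,\ell\ge0$, we have $e\ge 2-g$. Since $\ell\le j\le m\le g-1$, we have $e\le 2-g+2(g-1)=g$. This gives \eqref{eq:Fg-singular}.

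The only point needing care is that $H_h(w)$ and $R_{m,j}(w)$ are genuinely polynomials in $w$. For $H_h$ this is clear from \eqref{eq:H-definition}, since the Bernoulli polynomials are polynomials. For $R_{m,j}$ it follows from the expansion \eqref{eq:Uell-expanded}, in which $X$ is polynomial in $w$ and $(1-u)^{1-w}$ has polynomial coefficients. This is routine, so I expect no real obstacle: the main content is the bound $j\le m$, which the paper has already established.
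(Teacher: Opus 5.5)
Your proposal is correct and follows the paper's own proof. Both expand \eqref{eq:Fg-def} using \eqref{eq:R-expansion} and the Taylor series of $\Hcal$, extract the coefficient of $u^{g-1}$, and rewrite $z^j=(1-(1-z))^j$ binomially. Grouping by $e=2-g+m+\ell$ then gives \eqref{eq:Cge-explicit}, and the inequalities $0\le\ell\le j\le m\le g-1$ give the range $2-g\le e\le g$.
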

\begin{proof}
Expanding \eqref{eq:Fg-def} by \eqref{eq:R-expansion} gives
\[
 \Fcal_g(z,w)
 =\sum_{m=0}^{g-1}\sum_{j=0}^{m}
 z^jR_{m,j}(w)H_{g-1-m}(w)(1-z)^{2-g+m-w}.
\]
Put $\xi=1-z$.  Since
\[
 z^j=(1-\xi)^j=\sum_{\ell=0}^j(-1)^\ell\binom{j}{\ell}\xi^\ell,
\]
we obtain terms with exponent $e=2-g+m+\ell$.  Grouping them by $e$ gives \eqref{eq:Fg-singular} and \eqref{eq:Cge-explicit}.  The inequalities $0\le\ell\le j\le m\le g-1$ imply $2-g\le e\le g$.
\end{proof}

\begin{rem}\label{rem:finite-weight0}
Under the non-equivariant specialization, $P_1=1-z$ and $P_d=1$ for $d\geq 2$. Thus, for each fixed $g\geq 2$, the weight-$0$ generating function is a Laurent polynomial in $1-z$; see \cite[Remark~1.6]{CFGP}. Proposition~\ref{prop:finite-singular} gives an analogous statement: $(1-z)^w\Fcal_g(z,w)$ is a Laurent polynomial in $1-z$, with coefficients in $\Q[w]$.
\end{rem}

Thus, for fixed $g$, the full dependence on $n$ is determined by finitely many polynomials $C_{g,e}$.  Also note that, after expanding $(1-z)^{-w}=\exp(w\log(1/(1-z)))$ and applying $\Trunc{\Gamma}$, the generating function in $n$ is a finite linear combination of terms
\[
 (1-z)^e\left(\log\frac1{1-z}\right)^k.
\]

\subsection{Rising factorials and coefficient extraction}
For \(m\ge0\), we use the rising and falling
factorials
\begin{equation}\label{eq:factorial-conventions}
 \rise{x}{m}:=\prod_{r=0}^{m-1}(x+r),
 \qquad
 \fall{x}{m}:=\prod_{r=0}^{m-1}(x-r),
 \qquad\mbox{and}\qquad
 \rise{x}{0}=\fall{x}{0}:=1.
\end{equation}

\begin{lem}\label{lem:rising-coefficient}
For every integer \(e\), every \(n\ge0\), and a formal variable \(X\),
\[
 n![z^n](1-z)^{e-X}=\rise{X-e}{n}.
\]
\end{lem}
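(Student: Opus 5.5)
The plan is to expand $(1-z)^{e-X}$ by the generalized binomial theorem over the ring $\Q[X]$ and then identify the coefficient of $z^n$ with the rising factorial. Write $a:=X-e$, so that $(1-z)^{e-X}=(1-z)^{-a}$. In the formal power series ring $\Q[X][[z]]$ this is defined as $\exp\bigl(a\log\frac1{1-z}\bigr)$, which is also how the paper uses it in \eqref{eq:Fg-singular}. Because $e$ is an integer, the factor $(1-z)^{e}$ is an honest Laurent polynomial in $1-z$. This exponential definition is therefore consistent with multiplying $(1-z)^{-X}$ by $(1-z)^e$.

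The key identity to establish is
\[
(1-z)^{-a}=\sum_{n\ge0}\binom{a+n-1}{n}z^n=\sum_{n\ge0}\frac{\rise{a}{n}}{n!}z^n .
\]
I would prove it by the standard differential-equation argument, which works purely formally.
\begin{itemize}
\item Let $f(z)=\exp\bigl(a\log\frac1{1-z}\bigr)$. Then $f(0)=1$ and $(1-z)f'(z)=af(z)$.
\item Write $f=\sum_n f_nz^n$ and compare coefficients of $z^n$ on both sides. This gives $(n+1)f_{n+1}-nf_n=af_n$, that is, $(n+1)f_{n+1}=(a+n)f_n$.
\item Induction from $f_0=1$ then gives $n!f_n=\prod_{r=0}^{n-1}(a+r)=\rise{a}{n}$.
\end{itemize}
Substituting $a=X-e$ yields $n![z^n](1-z)^{e-X}=\rise{X-e}{n}$, which is exactly the claim.

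There is an alternative route if one prefers it. Both sides are polynomials in $X$ with rational coefficients, so it suffices to check the identity for infinitely many integer values $X=N\ge e$. For such $N$ the claim reduces to the classical expansion of $(1-z)^{-(N-e)}$, with $N-e\ge0$ a nonnegative integer, as a negative binomial series.

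I expect no real obstacle here. The only point needing care is the convention for non-integer or formal exponents. One must make sure the definition of $(1-z)^{e-X}$ agrees with the one used in Proposition~\ref{prop:finite-singular}, and the exponential/log definition handles this. One should also check the edge case $n=0$, where both sides equal $1$ by the convention $\rise{x}{0}=1$.
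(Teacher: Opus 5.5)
Your proposal is correct and follows the same route as the paper, which applies the generalized binomial theorem to $(1-z)^{-(X-e)}$ and compares coefficients of $z^n$. The only difference is that you also prove the binomial expansion formally, via the recursion $(n+1)f_{n+1}=(a+n)f_n$, where the paper simply cites it.
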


\begin{proof}
The generalized binomial theorem gives
\[
 (1-z)^{-(X-e)}
 =\sum_{n\ge0}\frac{\rise{X-e}{n}}{n!}z^n.
\]
Since \((1-z)^{e-X}=(1-z)^{-(X-e)}\), comparison of coefficients proves the
identity.
\end{proof}

For \(r,n\ge0\), define
\begin{equation}\label{eq:D-def}
 D_r(g,n):=\Trunc{r}[z^n]\Fcal_g(z,w) =(-1)^{n+1}c^{(r)}_{g,n} +\delta_{g,1}\delta_{n,0}.
\end{equation}

Define the formal power series in \(X\)
\begin{equation}\label{eq:Pgn-def}
 \Qcal_{g,n}(X)
 :=\frac1{1-X}
 \sum_{e=2-g}^{g}C_{g,e}(X)\rise{X-e}{n}
 \in\Q\llbracket X\rrbracket,
\end{equation}
with $C_{g,e}(X)\in \Q[X]$ the polynomials of Proposition \ref{prop:finite-singular}. 

\begin{prop}\label{prop:P-encodes-D}
For every $g\ge1$, $n\ge0$, and $r\ge0$,
\begin{equation}\label{eq:Q-D}
 [X^r]\Qcal_{g,n}(X)=n!D_r(g,n).
\end{equation}
Consequently, away from $(g,n)=(1,0)$,
\begin{equation}\label{eq:Q-c}
 [X^\Gamma]\Qcal_{g,n}(X)
 =n!(-1)^{n+1}c^{(\Gamma)}_{g,n}.
\end{equation}
\end{prop}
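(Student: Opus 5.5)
We need to show $[X^r]\Qcal_{g,n}(X)=n!\,D_r(g,n)$. The plan is to combine three earlier facts: the finite expansion of Proposition~\ref{prop:finite-singular}, the coefficient formula of Lemma~\ref{lem:rising-coefficient}, and the observation that the truncation $\Trunc{r}$ of a polynomial in $w$ is the coefficient of $X^r$ after multiplying by $1/(1-X)$.

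First I record that observation. If $f(w)=\sum_k a_k w^k$ is a polynomial or power series, then
\[
 [X^r]\frac{f(X)}{1-X}=\sum_{k=0}^r a_k=\Trunc{r}f(w).
\]
This is just the Cauchy product with the geometric series $\sum_{s\ge0}X^s$.

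Next I compute $[z^n]\Fcal_g(z,w)$ as a polynomial in $w$. By Proposition~\ref{prop:finite-singular},
\[
 \Fcal_g=\sum_{e=2-g}^{g} C_{g,e}(w)(1-z)^{e-w},
\]
a finite sum with polynomial coefficients. Extracting $[z^n]$ termwise and applying Lemma~\ref{lem:rising-coefficient} with the formal variable $w$ gives
\[
 n!\,[z^n]\Fcal_g(z,w)=\sum_e C_{g,e}(w)\rise{w-e}{n}.
\]
The right-hand side is a polynomial in $w$.

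One point needs a little care: the coefficient extraction must commute with the $w$-expansion. I would handle this by viewing $(1-z)^{e-w}$ as an element of $\Q[w]\llbracket z\rrbracket$, via $\exp(-w\log(1-z))$. In that ring each $z^n$-coefficient is a polynomial in $w$, and the generalized binomial theorem holds identically in $w$. The same interpretation was already implicit in Proposition~\ref{prop:finite-singular}, so this step is bookkeeping rather than a real obstacle.

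To finish, substitute $w=X$ and apply the observation with $f(w)=n![z^n]\Fcal_g(z,w)$. By \eqref{eq:Pgn-def} this gives
\[
 [X^r]\Qcal_{g,n}(X)=\Trunc{r}\bigl(n![z^n]\Fcal_g\bigr)=n!\,D_r(g,n),
\]
using the definition \eqref{eq:D-def}. This proves \eqref{eq:Q-D}. For \eqref{eq:Q-c}, take $r=\Gamma$ and use \eqref{eq:D-def} again: $D_\Gamma(g,n)=(-1)^{n+1}c^{(\Gamma)}_{g,n}+\delta_{g,1}\delta_{n,0}$. Away from $(g,n)=(1,0)$ the Kronecker delta vanishes, which gives \eqref{eq:Q-c}.
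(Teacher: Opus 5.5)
Your proposal is correct and follows essentially the same route as the paper: expand $\Fcal_g$ via Proposition~\ref{prop:finite-singular}, extract $[z^n]$ termwise with Lemma~\ref{lem:rising-coefficient}, and use that $[X^r]\,f(X)/(1-X)=\Trunc{r}f$; the second identity then follows from \eqref{eq:D-def}, since the Kronecker delta vanishes away from $(g,n)=(1,0)$. Your extra remark about working in $\Q[w]\llbracket z\rrbracket$ makes explicit a step that the paper leaves implicit, and does no harm.
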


\begin{proof}
By Proposition \ref{prop:finite-singular} and Lemma \ref{lem:rising-coefficient},
\[
 n![z^n]\Fcal_g(z,X)
 =\sum_{e=2-g}^{g}C_{g,e}(X)\rise{X-e}{n}.
\]
If $f(X)=\sum_{j\ge0}f_jX^j$, then
\[
 [X^r]\frac{f(X)}{1-X}=\sum_{j=0}^rf_j.
\]
Applying this identity to the preceding polynomial gives
\[
 [X^r]\Qcal_{g,n}(X)
 =n!\sum_{j=0}^r[w^j][z^n]\Fcal_g(z,w)
 =n!D_r(g,n).
\]
Equation \eqref{eq:Q-c} follows from \eqref{eq:D-def} and \eqref{eq:c-via-Fg}.
\end{proof}

For later use, let us also record here the following explicit formula for $D_1(g,n)$.

\begin{prop}\label{prop:D1-formula}
Let \(s>g\).  For \(g=1,2\),
\begin{equation}\label{eq:D1-g12}
 s!D_1(1,s)=-(s-2)!,
 \qquad
 s!D_1(2,s)=(s-2)!.
\end{equation}
For $g\geq 3$, put $h:=\lfloor(g-1)/2\rfloor$. Then
\begin{equation}\label{eq:D1-general}
 s!D_1(g,s)
 =(-1)^{g+1}\frac{B_{2h}}{2h}\rise{2h-1}{s}=(-1)^{g+1}
 \frac{B_{2h}}{2h(2h-2)!}(s+2h-2)!.
\end{equation}
\end{prop}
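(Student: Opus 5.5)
The plan is to bypass the polynomials $C_{g,e}$ and work directly from \eqref{eq:Fg-def} and \eqref{eq:D-def}. By definition $D_1(g,s)=[w^0][z^s]\Fcal_g+[w^1][z^s]\Fcal_g$. I would compute the $w$-expansion of the full product $(1-z)^{1-w}\Hcal(\tfrac{u}{1-z},w)\Rcal(u,z,w)$ to first order in $w$, and then discard every term that is a polynomial in $z$ of degree $<s$. The hypothesis $s>g$ is what lets me discard these terms.

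\emph{Order zero.} We have $\Hcal(\tau,0)=1$. For $\ell\ge2$ the argument $\frac1\ell\sum_{d\mid\ell}\mu(\ell/d)(1-w^d)$ vanishes at $w=0$, since $\sum_{d\mid\ell}\mu(\ell/d)=0$. Hence each ratio of $U_\ell$'s equals $1$ at $w=0$, and $\Rcal(u,z,0)=1-u$. So the $w^0$ part of the product is $(1-z)(1-u)$. Its $u^{g-1}$-coefficient is a polynomial in $z$ of degree $\le1<s$, so the $w^0$ contribution to $D_1(g,s)$ vanishes for all $g\ge1$.

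\emph{Order one.} The $w^1$ part equals $(1-z)(1-u)$ times $\partial_w$ of the logarithm at $w=0$. That derivative is a sum of four kinds of terms:
\begin{itemize}[label=$\cdot$]
\item $-\log(1-z)-\log(1-u)$;
\item the $\ell\ge2$ terms $\partial_w\log$ of the $U_\ell$-ratios;
\item the $\Hcal$-term.
\end{itemize}
For the $\Hcal$-term, differentiate \eqref{eq:H-definition}. Using $\frac{d}{dx}B_{r+1}(x)=(r+1)B_r(x)$, this gives
\[
\partial_w\log\Hcal(\tau,w)\big|_{w=0}=\sum_{r\ge1}\frac{B_r(1)}{r}\tau^r ,
\]
with $B_1(1)=\tfrac12$ and $B_r(1)=B_r$ for $r\ge2$.

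\emph{Discarding terms.} The $-\log(1-u)$ term is $z$-free, so after multiplying by $(1-z)$ it has $z$-degree $\le1$. For the $\ell\ge2$ terms, the support argument after \eqref{eq:R-expansion} shows that their $u^m$-coefficients have $z$-degree $\le m\le g-1$. After multiplying by $(1-z)$ the $z$-degree is $\le g<s$, so they contribute nothing. The term $-(1-z)(1-u)\log(1-z)$ has $u$-degree $\le1$ and so contributes only when $g\le2$.

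\emph{Case $g\ge3$.} Only $(1-z)(1-u)\sum_r \frac{B_r(1)}r u^r(1-z)^{-r}$ survives. Its $u^{g-1}$-coefficient is
\[
\frac{B_{g-1}(1)}{g-1}(1-z)^{2-g}-\frac{B_{g-2}(1)}{g-2}(1-z)^{3-g}.
\]
Since odd Bernoulli numbers of index $\ge3$ vanish, exactly one of these survives modulo polynomials in $z$. The only exception is $r=g-2=1$ when $g=3$, where the term $-\tfrac12(1-z)^0$ is a constant and is dropped.
\begin{itemize}[label=$\cdot$]
\item If $g$ is odd, the survivor is $+\frac{B_{2h}}{2h}(1-z)^{1-2h}$.
\item If $g$ is even, the survivor is $-\frac{B_{2h}}{2h}(1-z)^{1-2h}$.
\end{itemize}
Applying Lemma~\ref{lem:rising-coefficient} with $X=0$ and $e=1-2h$ gives $s![z^s](1-z)^{1-2h}=\rise{2h-1}{s}=(s+2h-2)!/(2h-2)!$. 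This yields \eqref{eq:D1-general} with sign $(-1)^{g+1}$.

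\emph{Cases $g=1,2$.} Here I would keep the extra terms $-(1-z)(1-u)\log(1-z)$ and the $r=1,2$ terms of the $\Hcal$-sum. For $g=1$, take the $u^0$ part: $-(1-z)\log(1-z)$ gives $s![z^s]=-(s-2)!$. For $g=2$, take the $u^1$ part. The term $+(1-z)\log(1-z)$ coming from $-u$ gives $+(s-2)!$. The term $\tfrac12 u$ coming from the $\Hcal$-sum is a polynomial in $z$ and drops. The $\ell\ge2$ terms contribute nothing, by the degree bound above.

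The main obstacle is justifying rigorously that the $\ell\ge2$ factors contribute only polynomials of $z$-degree $\le g$ at first order in $w$. I would derive this from the support condition $j\le m$ in \eqref{eq:R-expansion}, which holds coefficientwise in $w$ and in particular for the $w$-linear part of $\Rcal$. The remaining work is careful bookkeeping of signs and of the Bernoulli parity.
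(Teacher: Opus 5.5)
Your proposal is correct and follows essentially the same route as the paper: expand the product to first order in $w$, use the support condition $j\le m$ to discard the $\ell\ge2$ contributions (the paper's $R_1$), and read off the surviving Bernoulli term from $\partial_w\log\Hcal|_{w=0}=\sum_{r\ge1}B_r(1)\tau^r/r$, together with the $(1-z)\log(1-z)$ term when $g\le 2$. Your explicit check that the $w^0$ part $(1-z)(1-u)$ contributes nothing to $D_1(g,s)$ for $s>g$ is a small welcome addition, since the paper leaves that step implicit.
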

\begin{proof}
We use the Bernoulli polynomial identities \cite[Equations~(24.4.3) and~(24.4.34)]{NIST:DLMF}
\[
 B_{r+1}'(w)=(r+1)B_r(w)
 \qquad\mbox{and}\qquad
 B_r(1)=(-1)^rB_r.
\]
Since $\Hcal(\tau,0)=1$, logarithmic differentiation of \eqref{eq:H-definition} at $w=0$ gives
\[
 [w]\Hcal(\tau,w)
 =\sum_{r\geq 1 } \frac{B_{r+1}'(1)}{r(r+1)}\tau^r
 =
 \sum_{r\ge1}\frac{B_r(1)}r\tau^r
 =
 \sum_{r\ge1}\frac{(-1)^rB_r}r\tau^r
 .
\]
Moreover,
\[
 \Rcal(u,z,0)=1-u,
\]
because for $\ell\ge2$ one has
$\ell^{-1}\sum_{d\mid\ell}\mu(\ell/d)=0$.  The coefficient of $u^m$ in $[w]\Rcal$ is a polynomial in $z$ of degree at most $m$, by \eqref{eq:R-expansion}.

Put $L=\log(1/(1-z))$ and write
\[
 \Rcal(u,z,w)=(1-u)+wR_1(u,z)+O(w^2).
\]
Taking the coefficient of $w$ in \eqref{eq:Fg-def} gives
\begin{align}
 [w]\Fcal_g(z,w)
 =[u^{g-1}](1-z)\left(
 L(1-u)+(1-u)\sum_{r\ge1}\frac{(-1)^rB_r}r
 \left(\frac{u}{1-z}\right)^r+R_1(u,z)\right).
 \label{eq:Fg-linear}
\end{align}
The contribution of $R_1$ has $z$-degree at most $g$ and hence vanishes in degree $s>g$.  For $g\ge3$, the first term has no coefficient of $u^{g-1}$.  The remaining coefficient of $u^{g-1}$ is
\[
 \frac{(-1)^{g-1}B_{g-1}}{g-1}(1-z)^{2-g}
 -\frac{(-1)^{g} B_{g-2}}{g-2}(1-z)^{3-g}.
\]
All odd Bernoulli numbers above $B_1$ vanish.  Whether $g$ is odd or even, the coefficient of $z^s$ is therefore
\[
 (-1)^{g+1}\frac{B_{2h}}{2h}
 [z^s](1-z)^{-(2h-1)}.
\]
Lemma \ref{lem:rising-coefficient} gives \eqref{eq:D1-general}.

For $g=1$, the coefficient of $u^0$ in \eqref{eq:Fg-linear} is $(1-z)L$.  Hence, for $s>1$,
\[
 D_1(1,s)=[z^s](1-z)L=-\frac1{s(s-1)},
\]
which gives the first formula in \eqref{eq:D1-g12}.  For $g=2$, the coefficient of $u^1$ is $-(1-z)L$ plus a polynomial of degree at most two in $z$.  Thus, for $s>2$,
\[
 D_1(2,s)=-[z^s](1-z)L=\frac1{s(s-1)},
\]
and the second formula follows.    
\end{proof}

\subsection{Modular reduction via rising factorials}
The rising factorials have the following elementary property when considered modulo a prime number $p$.
\begin{lem}\label{lem:prime-period}
Let \(p\) be prime.  Then for every integer \(e\) and every \(n\ge0\) we have the following equality in $\F_p[X]$,
\begin{equation}\label{eq:prime-period-rising}
 \rise{X-e}{n+p}
 \equiv (X^p-X)\rise{X-e}{n}\pmod p.
\end{equation}
\end{lem}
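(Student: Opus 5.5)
We split the rising factorial of length $n+p$ into the first $n$ factors and the last $p$ factors, and then use the classical factorisation of $X^p-X$ over $\F_p$. By the definition \eqref{eq:factorial-conventions},
\[
 \rise{X-e}{n+p}=\prod_{r=0}^{n+p-1}(X-e+r)
 =\rise{X-e}{n}\cdot\prod_{r=n}^{n+p-1}(X-e+r).
\]
This identity holds in $\ZZ[X]$, hence also after reduction modulo $p$. It therefore suffices to show that the second product is congruent to $X^p-X$ in $\F_p[X]$.

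The shifts $-e+r$ for $r=n,\dots,n+p-1$ are $p$ consecutive integers. Their residues modulo $p$ therefore run exactly once through all of $\F_p$. Consequently, in $\F_p[X]$,
\[
 \prod_{r=n}^{n+p-1}(X-e+r)=\prod_{a\in\F_p}(X+a)=\prod_{a\in\F_p}(X-a).
\]
The last equality reindexes by $a\mapsto -a$, which is a bijection of $\F_p$.

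It remains to use the standard identity $\prod_{a\in\F_p}(X-a)=X^p-X$ in $\F_p[X]$. By Fermat's little theorem, $X^p-X$ vanishes at every $a\in\F_p$. It is monic of degree $p$ and has these $p$ distinct roots in the field $\F_p$. Since $\F_p[X]$ is a UFD, it is therefore equal to the monic product of the $p$ linear factors $X-a$.

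Combining the three displays gives \eqref{eq:prime-period-rising}. The argument is uniform in $e\in\ZZ$ and $n\ge0$, and it also covers $n=0$, where $\rise{X-e}{0}=1$. No step poses a real obstacle. The only point needing care is that the $p$ consecutive shifts form a complete residue system regardless of the sign of $e$, which holds because any $p$ consecutive integers do.
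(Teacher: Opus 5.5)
Your proof is correct and matches the paper's argument: split off the last $p$ factors $X-e+n+j$, note that their constant terms form a complete residue system modulo $p$, and identify their product with $\prod_{a\in\F_p}(X-a)=X^p-X$. Your reindexing $a\mapsto -a$ and your justification of the last identity via Fermat's little theorem only spell out steps the paper leaves implicit.
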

\begin{proof}
Factor the rising factorial as
\[
 \rise{X-e}{n+p}
 =\rise{X-e}{n}\prod_{j=0}^{p-1}(X-e+n+j).
\]
The residues \(-e+n+j\), \(0\le j<p\), form a complete residue system
modulo \(p\).  Hence, writing \(\mathbb F_p\) for the field with \(p\)
elements,
\[
 \prod_{j=0}^{p-1}(X-e+n+j)
 \equiv\prod_{a\in\mathbb F_p}(X-a)=X^p-X\pmod p.
\]
This proves \eqref{eq:prime-period-rising}.
\end{proof}

We want to apply the lemma to the series $\Qcal_{g,n}(X)$ of \eqref{eq:Pgn-def}.
Since the coefficients of this series are a priori rationals and not integers, we have to be slightly careful. We say that the prime $p$ is \emph{admissible up to degree $W$} for $\Qcal_{g,n}(X)$ if all denominators appearing in the evaluation of $[X^k]\Qcal_{g,n}(X)$ via the formulas above have nonzero (reduced) denominators modulo $p$, for $k\leq W$.
In this case we may consider all such coefficients as elements of $\F_p$, consider the series $\Qcal_{g,n}(X)+O(X^{W+1})$ as an element of $\F_p\llbracket X \rrbracket / \F_p\llbracket X \rrbracket_{\geq W+1}$, and evaluate it using finite field arithmetic.
Also note that for our application we care about the series coefficients up to and including degree 14, but never beyond.

\begin{lem}\label{lem:large-prime-admissible}
Let $g\geq 1$ and $n\geq 0$. Then every prime $p>2g$
is admissible for $\Qcal_{g,n}$ up to any degree.
\end{lem}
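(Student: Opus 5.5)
We need to show that every denominator arising in the computation of $[X^k]\Qcal_{g,n}(X)$ is prime to $p$ whenever $p>2g$. The factor $1/(1-X)$ has integer coefficients, and the rising factorials $\rise{X-e}{n}$ lie in $\ZZ[X]$. It therefore suffices to show that each polynomial $C_{g,e}(X)$ from Proposition~\ref{prop:finite-singular} has $p$-integral coefficients. By \eqref{eq:Cge-explicit}, $C_{g,e}$ is a $\ZZ$-linear combination of the products $R_{m,j}(X)H_{g-1-m}(X)$ with $m\le g-1$. So the plan is to prove that, for $p>2g$, the coefficients of $H_h$ with $h\le g-1$ and of $R_{m,j}$ with $m\le g-1$ are $p$-integral.

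For $H_h$ with $h\le g-1$, the exponent in \eqref{eq:H-definition} involves only $r\le h\le g-1$. Its coefficients are $(B_{r+1}(1-w)-B_{r+1})/(r(r+1))$. By von Staudt--Clausen, the Bernoulli numbers $B_i$ with $i\le g$ have denominators divisible only by primes $q$ with $q-1\mid i$, hence $q\le g+1<p$. The Bernoulli polynomial $B_{r+1}(1-w)=\sum_i\binom{r+1}{i}B_i(1-w)^{r+1-i}$ is therefore $p$-integral. The factor $1/(r(r+1))$ only involves primes at most $g$. Exponentiating a series without constant term and truncating at degree $h$ introduces the factorials $1/k!$ with $k\le h<p$. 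Hence $H_h$ is $p$-integral.

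For $R_{m,j}$ with $m\le g-1$, Lemma~\ref{lem:ell range} (or the same auxiliary-degree argument) shows that only the factors with $\ell\le 2g-2$ can contribute to $u$-degree at most $g-1$. Each such factor is built from \eqref{eq:Uell-expanded} with $\ell\le 2g-2<p$. The ingredients are as follows.
\begin{itemize}
\item The arguments $X$ involve only $1/\ell$ and integers.
\item The series $E_\ell^{-s}$ are, by \eqref{equ:uellEell}, $(\ell u^\ell)^s$ times a series in $\ZZ[[u]]$ with unit constant term, so only powers of $\ell$ enter the denominators.
\item The coefficients $1/k$, $1/(k(k-1))$ and $B_r/(r(r-1))$ must be bounded by degree: a term $X^kE_\ell^{-s}$ has $u$-valuation $\ell s$ shifted by $-j$ from $z/u$, so reaching total $u$-degree at most $g-1$ forces $k$ and $r$ to be bounded in terms of $g$ (roughly $r+k\lesssim 2g$).
\item The logarithm $\log(\lambda_\ell E_\ell)$ is $\log$ of $1+(\text{integer series})$, so its $u^a$ coefficient has denominator dividing $a\le g-1$.
\item The factor $(1-u)^{1-w}$ expands with $1/k!$, $k\le g-1$.
\end{itemize}
Finally, exponentiating up to degree $g-1$ introduces only factorials below $p$.

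The main obstacle is the bookkeeping in the third item: making precise that every Bernoulli index $r$ and every $k$ appearing up to $u$-degree $g-1$ satisfies a bound keeping all primes involved below $2g$. I would first use the valuation estimate from the proof of Lemma~\ref{lem:ell range}. Its auxiliary degree is at least $\ell s-(k-1)$ with $s\ge r+k-1$, which bounds $r$ and $k$ linearly in $g$. I would then check that the resulting primes $q\mid r(r-1)k(k-1)$, or $q-1\mid r$, are all at most $2g$, with a margin for small $g$ checked directly.
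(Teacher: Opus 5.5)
Your proposal is correct and follows essentially the same route as the paper's proof. Both reduce to $p$-integrality of $H_h$ and $R_{m,j}$ for $h,m\le g-1$, apply von Staudt--Clausen, restrict to $\ell\le 2g-2$ via Lemma~\ref{lem:ell range}, use integrality of $(\ell u^\ell E_\ell)^{-1}$, bound $k$ and $r$ by $u$-valuation, and observe that exponentiation only introduces factorials below $p$.

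To finish your bookkeeping step, your own estimate gives $\ell s-(k-1)\ge 2r+k-1$. Hence $2r+k\le g$ for the Bernoulli terms and $k\le g$ for the others, so every prime involved is at most $\max\{2g-2,g+1\}<p$, and no small-$g$ check is needed. The heuristic ``$r+k\lesssim 2g$'' taken literally would be too weak: von Staudt--Clausen would then allow the prime $2g+1$ in a denominator, and this prime can equal $p$.
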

\begin{proof}
By \eqref{eq:Pgn-def} and \eqref{eq:Cge-explicit} it suffices to control the denominators appearing in $R_{m,j}(w)$ and $H_h(w)$ for $m,h\leq g-1$. The case $g=1$ is immediate since $C_{1,1}=1$, so we assume $g\geq 2$.

By Lemma \ref{lem:ell range} only factors in the infinite product with $\ell\leq 2g-2$ can contribute to the coefficient of $u^{g-1}$ in $\Rcal$.
Moreover, look at the proof of Lemma \ref{lem:ell range}, but with the substitution $p=-z/u$ so that the auxiliary degree in the proof becomes the $u$-degree. The proof then shows that a term of degree $k$ in $X$ from \eqref{eq:Uell-expanded} has $u$-degree at least $k-1$, and a term involving a Bernoulli number $B_r$ has $u$-degree at least $\ell r$. Thus only $k\leq g$ and $r< g$ may occur.
The explicit divisors in these terms have the form $\ell,2,k,k-1,r,r-1$. By the von Staudt-Clausen-Theorem \cite[§24.10(i)]{NIST:DLMF} every prime $p$ dividing the denominator of $B_r$ ($r\geq 2$) is at most $r+1$. The expansion of the inverse of $\ell u^\ell E_\ell$ contains only integer coefficients, and hence the negative powers of $E_\ell$ appearing in \eqref{eq:Uell-expanded} have integer coefficients as well. Finally, $\log \Rcal$ has positive $u$-degree, so that the exponentiation can at worst produce denominators in the coefficient of $u^{g-1}$ that divide $(g-1)!$.
Hence the factor $R_{m,j}(w)$ can only contribute denominators with prime factors $\leq 2g-2$.

The definition of $H_h$ involves only Bernoulli polynomials of index $\leq h+1\leq g$. Their coefficients are integer multiples of Bernoulli numbers of no larger index, and hence at worst contribute primes $\leq g+1$ to the denominators.
Exponentiation and the division by $r$, $r+1$ in the definition of $H_h$ (see \eqref{eq:H-definition}) do not change that. It follows that any prime appearing in a relevant denominator is at most $\max\{2g-2,g+1\}$, which is less than $p$.
\end{proof}

From Lemma \ref{lem:prime-period} above we then immediately obtain:
\begin{cor}
\label{prop:P-translation}
Let \(p\) be admissible for $\Qcal_{g,n}$ up to degree $W$.  Then in $\F_p\llbracket X \rrbracket / \F_p\llbracket X \rrbracket_{\geq W+1}$ we have that
\[
 \Qcal_{g,n+p}(X)
 \equiv(X^p-X)\Qcal_{g,n}(X)\pmod p.
\]
More generally, if \(n=qp+s\) with \(q\ge0\) and \(0\le s<p\), then
\[
 \Qcal_{g,n}(X)
 \equiv(X^p-X)^q\Qcal_{g,s}(X)\pmod p.
\]
\end{cor}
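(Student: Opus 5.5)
The plan is to apply Lemma~\ref{lem:prime-period} termwise to the definition \eqref{eq:Pgn-def} of $\Qcal_{g,n}(X)$, and then iterate.

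\textbf{Setting up the reduction.} By admissibility up to degree $W$, the coefficients of $X^k$ for $k\le W$ in each ingredient have denominators prime to $p$. These ingredients are the polynomials $C_{g,e}(X)$, the factor $1/(1-X)=\sum_j X^j$, and the rising factorials, which have integer coefficients. Reduction modulo $p$ is a ring homomorphism from the $p$-integral rationals $\ZZ_{(p)}$ to $\F_p$. It therefore extends to a ring homomorphism
\[
\ZZ_{(p)}[X]/(X^{W+1}) \to \F_p[X]/(X^{W+1}),
\]
which commutes with sums and products. One point needs care: admissibility refers to the denominators occurring in the evaluation of the formula. I read this as saying that the truncated polynomials $C_{g,e}(X) \bmod X^{W+1}$ have $p$-integral coefficients. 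Lemma~\ref{lem:large-prime-admissible} establishes exactly this for $p>2g$, and the condition does not depend on $n$. Consequently $p$ is admissible for $\Qcal_{g,n'}$ for every $n'$ at once.

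\textbf{The one-step identity.} Reducing modulo $p$ and truncating at degree $W$, I get
\[
\Qcal_{g,n+p}(X)\equiv \frac{1}{1-X}\sum_{e=2-g}^{g} C_{g,e}(X)\,\rise{X-e}{n+p}.
\]
By Lemma~\ref{lem:prime-period}, each $\rise{X-e}{n+p}$ is congruent in $\F_p[X]$ to $(X^p-X)\rise{X-e}{n}$. The factor $X^p-X$ is independent of $e$, so it pulls out of the sum. Multiplication in the truncated ring is compatible with truncation, since any term of degree above $W$ in a product only receives contributions that are already dropped. Hence
\[
\Qcal_{g,n+p}(X)\equiv (X^p-X)\,\Qcal_{g,n}(X) \pmod p.
\]

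\textbf{Iteration.} Write $n=qp+s$ with $0\le s<p$. Induction on $q$, applying the one-step identity at $n=s+(q-1)p$ each time, gives
\[
\Qcal_{g,n}(X)\equiv (X^p-X)^q\,\Qcal_{g,s}(X) \pmod p.
\]

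The only genuine subtlety is the bookkeeping that makes reduction modulo $p$ legitimate uniformly in $n$. The rising factorials are integral, so everything rests on the $p$-integrality of the $C_{g,e}$ modulo $X^{W+1}$. Granting that, the statement is immediate.
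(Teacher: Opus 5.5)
Your proposal is correct and follows the paper's route: the paper obtains the corollary directly from Lemma~\ref{lem:prime-period}, applied termwise to \eqref{eq:Pgn-def} and then iterated. You also point out that admissibility depends only on the $n$-independent polynomials $C_{g,e}$, so reduction modulo $p$ is valid simultaneously for $\Qcal_{g,s}$, $\Qcal_{g,n}$ and $\Qcal_{g,n+p}$; the paper uses this without stating it.
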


Taking the coefficient of $X^\Gamma$ we hence obtain the following nonvanishing criterion for the numbers $c^{(\Gamma)}_{g,n}$.
\begin{cor}
\label{cor:modular-reduction}
Let \(\Gamma\in\{10,14\}\), let \(p>\Gamma\) be a prime that is admissible for $\Qcal_{g,n}$ up to degree $\Gamma$, and write
\[
 n=qp+s,
 \qquad 0\le s<p,
 \qquad 0\le q\le\Gamma-1.
\]
Then
\[
 n!D_\Gamma(g,n)
 \equiv(-1)^q s!D_{\Gamma-q}(g,s)\pmod p.
\]
For every pair \((g,n)\ne(1,0)\), this is equivalently
\[
 n!(-1)^{n+1}c^{(\Gamma)}_{g,n}
 \equiv(-1)^q s!D_{\Gamma-q}(g,s)\pmod p.
\]
In particular, if
\[
 D_{\Gamma-q}(g,s)\not\equiv0\pmod p,
\]
then \(c^{(\Gamma)}_{g,n}\ne0\), provided \((g,n)\ne(1,0)\).
\end{cor}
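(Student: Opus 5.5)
We take the coefficient of $X^\Gamma$ in the congruence of Corollary~\ref{prop:P-translation}. Since $p$ is admissible up to degree $\Gamma$, we may work in $\F_p\llbracket X\rrbracket/\F_p\llbracket X\rrbracket_{\ge \Gamma+1}$, and there
\[
 \Qcal_{g,n}(X)\equiv (X^p-X)^q\,\Qcal_{g,s}(X)\pmod p .
\]
Since $p>\Gamma$, we have $X^p\equiv 0$ in this truncated ring. Hence $(X^p-X)^q\equiv(-X)^q=(-1)^qX^q$, and the congruence becomes $\Qcal_{g,n}(X)\equiv(-1)^qX^q\Qcal_{g,s}(X)$ modulo $p$ and modulo $X^{\Gamma+1}$.

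Comparing coefficients of $X^\Gamma$ gives $[X^\Gamma]\Qcal_{g,n}\equiv(-1)^q[X^{\Gamma-q}]\Qcal_{g,s}$. Here $0\le\Gamma-q\le\Gamma$ because $q\le\Gamma-1$, so the right-hand coefficient lies in the admissible range. Note that $\Qcal_{g,s}$ is admissible up to degree $\Gamma$ as well: the denominators come only from the $C_{g,e}$ and the factor $1/(1-X)$, neither of which depends on $n$. Applying Proposition~\ref{prop:P-encodes-D} on both sides, with $r=\Gamma$ and $r=\Gamma-q$, yields
\[
 n!D_\Gamma(g,n)\equiv(-1)^q s!D_{\Gamma-q}(g,s)\pmod p,
\]
where both sides are read in $\F_p$. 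Their values are $p$-integral, again by admissibility.

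The second form follows from \eqref{eq:D-def}: for $(g,n)\ne(1,0)$ we have $D_\Gamma(g,n)=(-1)^{n+1}c^{(\Gamma)}_{g,n}$, equivalently \eqref{eq:Q-c}. For the final claim, suppose $D_{\Gamma-q}(g,s)\not\equiv0$ modulo $p$. Because $s<p$, the factor $s!$ is a unit mod $p$, so $s!D_{\Gamma-q}(g,s)\not\equiv0$. Then $n!(-1)^{n+1}c^{(\Gamma)}_{g,n}$ is a $p$-integral rational number that is nonzero mod $p$. In particular it is nonzero, so $c^{(\Gamma)}_{g,n}\neq0$.

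The only subtle step is justifying that the truncated congruence makes sense, namely that the $p$-integrality of the coefficients is preserved, including when $s$ replaces $n$. Everything else is a direct coefficient comparison.
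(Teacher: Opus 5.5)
Your proposal is correct and follows the paper's proof: take the $X^\Gamma$-coefficient of the congruence $\Qcal_{g,n}\equiv(X^p-X)^q\Qcal_{g,s}$, discard $X^p$ because $p>\Gamma$, and translate back via Proposition~\ref{prop:P-encodes-D}. Your extra remarks fill in details the paper leaves as ``easily follow'': admissibility carries over to $\Qcal_{g,s}$ because the only denominators come from the $n$-independent $C_{g,e}$, and $s!$ is a unit modulo $p$.
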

\begin{proof}
We have by \eqref{eq:Q-D}
\[
n!D_\Gamma(g,n)
=
[X^\Gamma]\Qcal_{g,n}(X)
\equiv [X^\Gamma] ( (X^p-X)^q\Qcal_{g,s}(X)) \pmod p.
\]
Since $p>\Gamma$ there is no contribution from the summand $X^p$ and the remaining statements easily follow.
\end{proof}

\subsection{Nonvanishing for large $n$}
Note that by Proposition \ref{prop:D1-formula} we see that $D_1(g,s)$ is nonzero as long as $s>g$. This holds also modulo a prime $p$, as long as $p$ is large enough and $B_{2h}\neq 0 \mod p$. Hence, if that holds and if in Corollary \ref{cor:modular-reduction} we can choose $p$ so that $q=\Gamma-1$, then we can conclude that \(c^{(\Gamma)}_{g,n}\ne0\) as desired.

To make this argument more precise, put for $g\ge3$
\begin{equation}\label{eq:Ag-def}
 h:=\left\lfloor\frac{g-1}{2}\right\rfloor,
 \qquad
 d_g:=2h-2,
 \qquad\mbox{and}\qquad
 (-1)^{g+1}\frac{B_{2h}}{2h}=\frac{A_g}{C_g}
\end{equation}
in lowest terms, with $C_g>0$.  For $g=1,2$, set $A_g=C_g=1$ and $d_g=0$. Thus $0\le d_g\le1996$ for $1\le g\le2000$. Then we have the following nonvanishing criterion.

\begin{prop}\label{prop:tail-prime-criterion}
Let $1\le g\le2000$, $\Gamma\in\{10,14\}$, and $n\ge0$.  Suppose there is a prime $p$ such that
\begin{equation}\label{eq:prime-window}
 \frac{n+d_g}{\Gamma}<p<\frac{n-g}{\Gamma-1},
 \qquad
 p>2\cdot2000+\Gamma+2,
 \qquad
 p\nmid A_g.
\end{equation}
Then $c^{(\Gamma)}_{g,n}\ne0$.
\end{prop}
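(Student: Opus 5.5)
The plan is to apply Corollary~\ref{cor:modular-reduction} with $q=\Gamma-1$ and then evaluate the resulting quantity $s!D_1(g,s)$ through Proposition~\ref{prop:D1-formula}. Given a prime $p$ as in \eqref{eq:prime-window}, write $n=qp+s$ with $0\le s<p$. The upper bound $p<(n-g)/(\Gamma-1)$ gives $(\Gamma-1)p<n-g$, so $q\ge\Gamma-1$ and also $s=n-(\Gamma-1)p>g$ when $q=\Gamma-1$. The lower bound $p>(n+d_g)/\Gamma$ gives $\Gamma p>n+d_g\ge n$, so $q\le\Gamma-1$. Together these force $q=\Gamma-1$ and $s=n-(\Gamma-1)p$. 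They also give the sharper inequality $s+d_g=n+d_g-(\Gamma-1)p<p$, which we will need.

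Next we check the hypotheses of Corollary~\ref{cor:modular-reduction}. We have $p>4000+\Gamma+2>\Gamma$, and since $p>2g$, Lemma~\ref{lem:large-prime-admissible} shows that $p$ is admissible up to every degree. Also $(g,n)\ne(1,0)$, because $n>(\Gamma-1)p+g$ is large. The corollary then yields
\[
 n!(-1)^{n+1}c^{(\Gamma)}_{g,n}\equiv(-1)^{\Gamma-1}s!D_1(g,s)\pmod p,
\]
so it suffices to show that $s!D_1(g,s)\not\equiv0\pmod p$.

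Because $s>g$, Proposition~\ref{prop:D1-formula} applies. For $g=1,2$ it gives $s!D_1(g,s)=\pm(s-2)!$. Here $s-2<s<p$, so $(s-2)!$ is a product of integers smaller than $p$ and is a unit modulo $p$ (note $d_g=0$ in this case).

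For $g\ge3$ it gives
\[
 s!D_1(g,s)=\frac{A_g}{C_g}\cdot\frac{(s+2h-2)!}{(2h-2)!}=\frac{A_g}{C_g}\cdot\frac{(s+d_g)!}{d_g!}.
\]
We check each factor modulo $p$:
\begin{enumerate}[label=(\alph*)]
\item $(s+d_g)!$ is a unit because $s+d_g<p$, as shown above.
\item $d_g!$ is a unit because $d_g\le1996<p$.
\item $C_g$ is a unit because, by von Staudt--Clausen, its prime factors are at most $2h+1\le g<p$, or divide $2h<p$.
\item $A_g$ is a unit by the hypothesis $p\nmid A_g$.
\end{enumerate}
Hence $s!D_1(g,s)\not\equiv0$, and therefore $c^{(\Gamma)}_{g,n}\neq0$.

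The main obstacle is keeping the inequalities straight. We must derive both $q=\Gamma-1$ and the bound $s+d_g<p$ from the window, and confirm that the window is arranged so that the factorial $(s+d_g)!$ stays below $p$. The denominators need bookkeeping too: $C_g$, and the $1/(2h)$ already absorbed into $A_g/C_g$, must be prime to $p$, which the lower bound $p>4000+\Gamma+2$ comfortably guarantees.
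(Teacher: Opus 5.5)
Your proposal is correct and follows the paper's proof essentially step for step. You set $s=n-(\Gamma-1)p$, use the two window inequalities to get $s>g$ and $s+d_g<p$, apply Corollary~\ref{cor:modular-reduction} with $q=\Gamma-1$ (admissibility coming from Lemma~\ref{lem:large-prime-admissible}), and check that $s!D_1(g,s)$ from Proposition~\ref{prop:D1-formula} is nonzero modulo $p$. The only cosmetic difference is that you justify invertibility of $C_g$ directly via von Staudt--Clausen, whereas the paper deduces it from the admissibility of $p$, which itself rests on the same theorem.
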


\begin{proof}
Set $s=n-(\Gamma-1)p$.  The upper bound on $p$ gives $s>g$, while the lower bound gives $p>s+d_g$.  Hence $0\le s<p$ and
$n=(\Gamma-1)p+s$.  The large-prime condition makes $p$ admissible by Lemma \ref{lem:large-prime-admissible}.  Corollary \ref{cor:modular-reduction} with $q=\Gamma-1$ gives
\[
 n!(-1)^{n+1}c^{(\Gamma)}_{g,n}
 \equiv(-1)^{\Gamma-1}s!D_1(g,s)\pmod p.
\]
For $g=1,2$, the right-hand side is nonzero by \eqref{eq:D1-g12}, since $s<p$.  For $g\ge3$, admissibility implies that $C_g$ is invertible modulo $p$, and $p\nmid A_g$ makes $A_g/C_g$ nonzero.  Every factor in $\rise{2h-1}{s}$ is at most $s+d_g<p$.  Formula \eqref{eq:D1-general} is therefore nonzero modulo $p$.
\end{proof}

The above reduces the nonvanishing of $c^{(\Gamma)}_{g,n}$ to an existence problem for suitable primes. We will show that this problem is solvable when $n$ is sufficiently large. We begin with an elementary auxiliary lemma.

\begin{lem}\label{lem:large-prime-divisors}
Let $A\ne0$ be an integer and $L>1$.  The number of distinct prime divisors $p\mid A$ with $p>L$ is at most
\[
 \left\lfloor\frac{\log|A|}{\log L}\right\rfloor.
\]
\end{lem}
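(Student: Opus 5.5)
Let $p_1<\dots<p_k$ be the distinct prime divisors of $A$ that exceed $L$; the goal is to show $k\le \log|A|/\log L$. Since $k$ is an integer, the bound with the floor follows immediately.

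The argument is a size comparison. Each $p_i$ divides $A$ and the $p_i$ are pairwise distinct primes, so their product $p_1\cdots p_k$ divides $A$. As $A\neq 0$, this gives
\[
 |A|\ \ge\ p_1\cdots p_k\ >\ L^k
\]
when $k\ge1$, because each $p_i>L>1$.

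Taking logarithms, which is legitimate since $\log L>0$, we obtain $k\log L<\log|A|$, so $k<\log|A|/\log L$ and in particular $k\le\lfloor\log|A|/\log L\rfloor$. When $k=0$ the bound is trivial, because $|A|\ge1$ makes the right-hand side nonnegative.

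There is no real obstacle. The only point needing care is the degenerate case $|A|=1$, where there are no prime divisors at all, so $k=0$ and the floor equals $0$.
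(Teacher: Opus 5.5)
Your proposal is correct and follows the paper's proof: the product of the distinct primes $p_i>L$ dividing $A$ is bounded by $|A|$ and exceeds $L^k$, and taking logarithms gives the bound. Your separate handling of the case $k=0$ is a harmless extra detail.
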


\begin{proof}
If $p_1,\dots,p_k>L$ are distinct prime divisors of $A$, then
$L^k<p_1\cdots p_k\le|A|$.  Taking logarithms proves the claim.
\end{proof}

Furthermore, we will use the explicit prime-counting estimates of Dusart \cite[Corollary 5.3]{Dusart}, with $\pi(x)$ the number of primes up to $x$:
\begin{equation}\label{eq:Dusart}
 \frac{x}{\log x-1}\le\pi(x)\quad(x\ge5393),
 \qquad
 \pi(x)\le\frac{x}{\log x-1.112}\quad(x\ge 4). %
\end{equation}

In the following we restrict to $\Gamma=10$, in order to show Theorem \ref{thm:mainA}.
Define the genus-independent prime window
\begin{equation}\label{eq:universal-prime-window}
 L(n):=\frac{n+1996}{10}
 \qquad\mbox{and}\qquad
 U(n):=\frac{n-2000}{9}-1.
\end{equation}
Every prime satisfying $L(n)<p\le U(n)$ satisfies the two strict inequalities in \eqref{eq:prime-window} (for $\Gamma=10$) for every $1\le g\le2000$.

We will need the following finite numerical data. 
\begin{inputbox}
\begin{computerverification}[\texttt{ag\_bounds.py}]\label{input:tail-endpoints}
Exact arithmetic verifies, for $A_g$ as in \eqref{eq:Ag-def},
\begin{align}
 |A_g|&<200000^{782} &&(1\le g\le2000),\label{eq:Ag-bound-10}
\end{align}
\end{computerverification}
\end{inputbox}

\begin{prop}\label{prop:infinite-prime-windows}
For every $1\le g\le2000$ and $n\ge2000000$, there is a prime satisfying the conditions of Proposition \ref{prop:tail-prime-criterion} for $\Gamma=10$.
In particular, $a_{g,n}\ne0$ for each such $(g,n)$.
\end{prop}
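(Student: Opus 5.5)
The argument counts primes in the window $L(n)<p\le U(n)$ and shows that there are more of them than there are primes that can be excluded. Every prime in this window automatically satisfies the two strict inequalities in \eqref{eq:prime-window} for every $1\le g\le2000$. It also satisfies the size condition $p>4012$ as soon as $L(n)\ge 4012$, which holds for $n\ge2000000$ since then $L(n)>200000$. So the only remaining requirement is $p\nmid A_g$. It therefore suffices to show that, for each fixed $g$, the number of primes in $(L(n),U(n)]$ strictly exceeds the number of prime divisors of $A_g$ larger than $L(n)$.

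The excluded primes are handled by Lemma~\ref{lem:large-prime-divisors} together with Computer Verification~\ref{input:tail-endpoints}. Since $L(n)\ge 200000$ for $n\ge2000000$ (indeed $L(n)\ge 200199.6$), the bound $|A_g|<200000^{782}$ shows that $A_g$ has at most $\lfloor 782\log 200000/\log L(n)\rfloor\le 782$ prime divisors exceeding $L(n)$. This bound is uniform in $g$. Note that $A_g\neq0$, because $B_{2h}\ne0$; for $g=1,2$ we have $A_g=1$.

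The main step is the lower bound
\[
\pi(U(n))-\pi(L(n))\ge 783
\]
for all $n\ge 2000000$, which we obtain from Dusart's estimates \eqref{eq:Dusart}. We have $U(n)\approx n/9$ and $L(n)\approx n/10$, and both exceed $5393$, so
\[
\pi(U)-\pi(L)\ge \frac{U}{\log U-1}-\frac{L}{\log L-1.112}.
\]
The right-hand side is roughly $\frac{n}{\log n}\bigl(\tfrac19-\tfrac1{10}\bigr)$ up to lower-order corrections. At $n=2\cdot10^6$ this is already of order $10^4$, far above $783$. To make the bound rigorous and uniform in $n$, I would check the inequality at $n=2000000$ directly, where $U\approx 222000$, $L\approx 200200$, and the difference is about $19000/11.3-\dots$, a few hundred at worst and in fact around $1500$ once computed carefully. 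Then I would show that the function
\[
f(x)=\frac{x/9-c_1}{\log(x/9)-1}-\frac{x/10+c_2}{\log(x/10)-1.112}
\]
is increasing for $x\ge2000000$, for instance by differentiating or by bounding $\frac{x}{\log x - a}$ between linear functions over dyadic ranges. I expect this to be the main obstacle, because the $0.112$ discrepancy between the constants in the two Dusart bounds costs a factor that has to be absorbed by the margin $\frac19-\frac1{10}=\frac1{90}$. One must verify carefully that the margin wins already at $n=2\cdot10^6$ and that it only grows afterwards. The derivative of $x/(\log x-a)$ is $(\log x-a-1)/(\log x-a)^2$, which makes the monotonicity check elementary.

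With this bound in hand, for each $g$ there is a prime $p$ with $L(n)<p\le U(n)$ and $p\nmid A_g$. Proposition~\ref{prop:tail-prime-criterion} with $\Gamma=10$ then gives $c^{(10)}_{g,n}=a_{g,n}\ne0$.
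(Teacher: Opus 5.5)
Your proposal is correct and follows the paper's proof essentially step for step. The shared ingredients are the genus-uniform window $L(n)<p\le U(n)$, Lemma~\ref{lem:large-prime-divisors} with $|A_g|<200000^{782}$ (which in fact leaves at most $781$ excluded primes), Dusart's two bounds, an evaluation at $n=2000000$ (the paper finds $F(2000000)>1583$), and monotonicity of $F(n)=U(n)/(\log U(n)-1)-L(n)/(\log L(n)-1.112)$ via the derivative formula you quote; the paper handles the fact that $(\log y-a-1)/(\log y-a)^2$ decreases in $\log y$, so that the larger argument $\log U(n)$ works against you, by inserting $\log U(n)-\log L(n)<\log(10/9)<1/9$ and checking an explicit rational function of $u=\log L(n)-11>0$. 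Two small slips do not affect the argument: the main term at $n=2\cdot10^6$ is about $1.5\cdot10^3$ rather than of order $10^4$, and your $f$ should use $\log U(n)$ and $\log L(n)$ instead of $\log(x/9)$ and $\log(x/10)$, so that it is literally the Dusart lower bound.
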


\begin{proof}
With $L(n)$ and $U(n)$ as in \eqref{eq:universal-prime-window}, consider
\begin{equation}\label{equ:ULdiff}
 F(n) := \frac{U(n)}{\log U(n)-1}
 -\frac{L(n)}{\log L(n)-1.112}.
\end{equation}
We first show that this function is increasing in $n\geq 2000000$. We compute
\[
 \frac{d}{dy}\frac{y}{\log y-a}
 =\frac{\log y-a-1}{(\log y-a)^2},
\]
and note that the last expression decreases as a function of $\log y$ once $\log y>a+2$.
We will also use
\[
 0<\log U(n)-\log L(n)
 <\log\frac{10}{9}<\frac1{9}.
\]
Write $\log L(n)=11+u$ with $u>0$, using that $\log L(n)\geq \log L(2000000)>11$. Thus $\log U(n)<\log L(n)+\frac 19 = 11+\frac 19 +u$. Then we bound the derivative of \eqref{equ:ULdiff} from below as
\begin{align*}
F'(n) &= \frac 19 \frac{\log U(n)-2}{(\log U(n)-1)^2}
-\frac{1}{10}
\frac{\log L(n)-2.112}{(\log L(n)-1.112)^2}
\\&>
\frac 19 \frac{11+\frac 19+u-2}{(11+\frac 1 9+u-1)^2}
-\frac{1}{10}
\frac{11+u-2.112}{(11+u-1.112)^2}
\\
&=
\frac{140625u^3+3779875u^2+34004765u+102686845}
{10(9u+91)^2(125u +1236)^2}
 >0.
\end{align*}

By explicit evaluation we have that 
\[
F(2000000)>1583,
\]
and hence, since $F$ is increasing, $F(n)>1583$ for all $n\geq 2000000$. In this range both of Dusart's estimates in \eqref{eq:Dusart} apply and show that the interval
$L(n)<p\le U(n)$ contains more than $1583$ primes, hence at least $1584$. Every such prime is larger than $200000$.  By the bound \eqref{eq:Ag-bound-10} of Computer Verification~\ref{input:tail-endpoints} and Lemma \ref{lem:large-prime-divisors}, at most $781$ of them divide $A_g$.  Thus at least one satisfies all conditions of Proposition \ref{prop:tail-prime-criterion}, so that $a_{g,n}\neq 0$.
\end{proof}

The proposition reduces Theorem \ref{thm:mainA} to a finite computation. The estimates in its proof are coarse, however, and the region of the $(g,n)$-table that remains to be covered is still large. We extend the range covered by a computer search as follows.
For a fixed prime $p$, the two strict inequalities in \eqref{eq:prime-window} are equivalent, for integral $n$, to
\[
 (\Gamma-1)p+g+1\le n\le\Gamma p-d_g-1.
\]
So we merely search through suitable primes to cover a larger interval of $n$'s.

\begin{inputbox}
\begin{computerverification}[\texttt{prime\_cover.py}]
\label{input:tail-cover}
For every $1\le g\le2000$, there is a finite list of primes
$\mathcal P_{g}$ such that the following holds:
\begin{enumerate}[label=(\roman*)]
 \item every prime in $\mathcal P_g$ exceeds $2\cdot2000+12$;
 \item every $p\in\mathcal P_{g}$ satisfies $p\nmid A_g$;
 \item the intervals
 \[
 [9p+g+1,\,10p-d_g-1],
 \qquad p\in\mathcal P_{g},
 \]
 cover every integer $50000\le n<2000000$;
\end{enumerate}
\end{computerverification}
\end{inputbox}
\noindent Combining Computer Verification~\ref{input:tail-cover} with Propositions~\ref{prop:tail-prime-criterion} and~\ref{prop:infinite-prime-windows}, we have:
\begin{cor}\label{cor:tail}
For \(1\le g\le2000\) and $n\ge50000$ we have $a_{g,n}\ne0$.
\end{cor}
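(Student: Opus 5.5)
The corollary is a direct assembly of results already stated, so the plan is to split the range $n\ge 50000$ at $n=2000000$ and cite the relevant result in each part.

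\emph{Range $n\ge 2000000$.} Proposition~\ref{prop:infinite-prime-windows} applies verbatim for every $1\le g\le 2000$ and gives $a_{g,n}\ne0$. No further work is needed here.

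\emph{Range $50000\le n<2000000$.} Fix $g$ and $n$. By Computer Verification~\ref{input:tail-cover}(iii) there is a prime $p\in\mathcal P_g$ with
\[
9p+g+1\le n\le 10p-d_g-1.
\]
As noted before that verification, for integral $n$ these are exactly the two strict inequalities of \eqref{eq:prime-window} with $\Gamma=10$:
\[
\frac{n+d_g}{10}<p<\frac{n-g}{9}.
\]
The remaining two conditions of \eqref{eq:prime-window} come from parts (i) and (ii) of the same verification. Part (i) gives $p>2\cdot 2000+12$, which is the required bound $p>2\cdot2000+\Gamma+2$ for $\Gamma=10$. Part (ii) gives $p\nmid A_g$. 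Proposition~\ref{prop:tail-prime-criterion} then yields $c^{(10)}_{g,n}=a_{g,n}\ne0$.

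The only point needing care is the hypothesis $(g,n)\ne(1,0)$ hidden in Corollary~\ref{cor:modular-reduction}, which Proposition~\ref{prop:tail-prime-criterion} uses. It is automatic, since $n\ge50000$. There is no substantive obstacle: the content lies in the computer verification and in the prime-counting argument already carried out. The one thing to check is the translation between the integer intervals and the strict inequalities, namely that $9p+g+1\le n$ is equivalent to $p<(n-g)/9$ and that $n\le 10p-d_g-1$ is equivalent to $p>(n+d_g)/10$. Both follow because all quantities involved are integers.
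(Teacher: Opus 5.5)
Your proposal is correct and is the paper's argument: the corollary is obtained exactly by splitting at $n=2000000$, citing Proposition~\ref{prop:infinite-prime-windows} for the upper range, and, for $50000\le n<2000000$, feeding the primes of Computer Verification~\ref{input:tail-cover} into Proposition~\ref{prop:tail-prime-criterion}. Your integer reformulation of the two strict inequalities and your remark that $(g,n)\ne(1,0)$ holds automatically are both right.
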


\subsection{Completion of the arithmetic proof}
Corollary \ref{cor:tail} reduces the proof of Theorem \ref{thm:mainA} to a manageable finite computation. Namely, for each $g\leq 2000$ we compute $a_{g,n}$ for $n<50000$ modulo a suitable prime, and show that these numbers are nonzero in the range stated in Theorem \ref{thm:mainA}.
The program is available in the \href{https://github.com/wilthoma/mgn_pointcounts2_code}{GitHub repository} linked in the introduction.

Recall from \eqref{eq:D-def} that $D_{10}(g,n)=(-1)^{n+1}\chi_{11}(\M_{g,n})/(2n!)$ in the range below. Since $2n!$ is invertible modulo $2013265921$ for $n<50000$, the following verification proves the required nonvanishing of $\chi_{11}(\M_{g,n})$.

\begin{inputbox}
\begin{computerverification}[\texttt{finite\_cover.py}]
\label{input:finite-modular}
The program verifies that for every $1\le g\le2000$ and $0\le n<50000$ such that $3g+2n\geq 25$ and $(g,n)\notin \{(8,1),(12,0)\}$
\[
D_{10}(g,n) \not\equiv 0 \mod 2013265921.
\]
\end{computerverification}
\end{inputbox}

Computer Verification~\ref{input:finite-modular} and Corollary~\ref{cor:tail} together prove Theorem \ref{thm:mainA}.\hfill \qed

\section{Analytic proof of nonvanishing for \texorpdfstring{$g>2000$}{g > 2000}}
\label{sec:mainB}

We next turn to the analytic argument for high genus. The estimates below hold for $g\geq 2000$; together with Section~\ref{sec:mainA}, they show Theorem \ref{thm:main nonvanishing} by treating the remaining range $g>2000$.
This part is shown by a quite different argument, following roughly the strategy of \cite[Section 6]{CLPW}. 
Concretely, we split $c_{g,n}^{(\Gamma)}$ into a leading contribution and an error term. We may then conclude nonvanishing of $c_{g,n}^{(\Gamma)}$ from analytic estimates on the leading and error terms. Here it is essential to consider both $\Gamma=10$ and $\Gamma=14$, as we can only show that they do not vanish together, but not that they do not vanish individually.

\subsection{Leading terms, error bounds, and nonvanishing}
\label{sec:dominance}
Define
\begin{equation}\label{eq:Sfac}
 \Sfac_g:=\frac{(g-2)!}{(2\pi)^g}
 \qquad\mbox{and}\qquad
 \Sfac_{g,n}:=\binom{g+n-3}{n}\Sfac_g.
\end{equation}
With the rising factorials of \eqref{eq:factorial-conventions}, let
\begin{equation}\label{eq:NP}
 P_{g,n}(w):=\frac{\rise{g-2+w}{n}}{\rise{g-2}{n}},
\end{equation}
so that for $g\geq 3$ we have 
\begin{equation}\label{eq:binomial-P}
 [z^n](1-z)^{2-g-w}=\binom{g+n-3}{n} P_{g,n}(w).
\end{equation}
Furthermore, let for $h\ge 1$
\begin{equation}\label{eq:Hhat}
 \widehat H_h(w):=\frac{H_h(w)}{\Sfac_{h+1}},
\end{equation}
and for \(m\le g-3\), define
\begin{equation}\label{eq:omega}
 \omega_{m,j}(g,n):=(2\pi)^m\frac{g-m-2}{g-2}
 \frac{\fall{n}{j}}{\fall{g+n-3}{m+j}}.
\end{equation}
Then using the abbreviations \eqref{eq:Sfac}, \eqref{eq:NP}, \eqref{eq:Hhat}, and \eqref{eq:omega}, together with \eqref{eq:binomial-P}, in \eqref{eq:raw-first} we obtain
\begin{equation}\label{eq:normalized-full}
 \frac{(-1)^{n+1}c^{(\Gamma)}_{g,n}}{\Sfac_{g,n}}
 =\sum_{m=0}^{g-3}\sum_{j=0}^{\min(m,n)}
 \omega_{m,j}\Trunc{\Gamma}
 \bigl(R_{m,j}P_{g-m,n-j}\widehat H_{g-1-m}\bigr)
 +\partial_\Gamma(g,n),
\end{equation}
where \(\partial_\Gamma\) is the contribution of the terms with
\(m=g-2,g-1\) in the sum over $m$. We separate these terms only because the normalization factors used above become singular and zero, respectively; the original terms themselves are well-defined.

For $f(w)=\sum f_kw^k$, write
\begin{equation}\label{eq:norm}
 \|f\|_{1,\le14}:=\sum_{k=0}^{14}|f_k|
 \qquad\mbox{and}\qquad
 r_m:=\sum_{j=0}^m\|R_{m,j}\|_{1,\le14}.
\end{equation}
The polynomial $P_{g,n}$ has nonnegative coefficients, so
\begin{equation}\label{eq:Pnorm}
 \|P_{g,n}\|_{1,\le14}
 \le P_{g,n}(1)=\frac{g+n-2}{g-2}.
\end{equation}

\begin{lem}\label{lem:term-bound}
For $1\le m\le g-3$, the sum of the absolute values of all terms in \eqref{eq:normalized-full} with this fixed $m$ satisfies
\[
  \sum_{j=0}^{\min(m,n)} \left|
 \omega_{m,j}\Trunc{\Gamma}
 \bigl(R_{m,j}P_{g-m,n-j}\widehat H_{g-1-m}\bigr)
 \right|\leq 
 \frac{\|\widehat H_{g-1-m}\|_{1,\le14}r_m(2\pi)^m}
 {(g-2)\fall{g+n-3}{m-1}}.
\]
The absolute value of the sum of the final two terms with $m=g-2,g-1$ satisfies
\[
 |\partial_\Gamma(g,n)| \leq \frac{r_{g-2}+2r_{g-1}}{\Sfac_{g,n}}.
\]
\end{lem}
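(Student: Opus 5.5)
For fixed $m$, I would bound each term by submultiplicativity of the truncated $\ell^1$-norm. For polynomials $f,g,h$ in $w$,
\[
 |\Trunc{\Gamma}(fgh)|\le\|fgh\|_{1,\le14}\le\|f\|_{1,\le14}\|g\|_{1,\le14}\|h\|_{1,\le14}.
\]
The first inequality holds because $\Gamma\le14$. The second holds because the coefficients of degree at most $14$ in a product only involve coefficients of degree at most $14$ of the factors. This gives
\[
 \bigl|\omega_{m,j}\Trunc{\Gamma}(R_{m,j}P_{g-m,n-j}\widehat H_{g-1-m})\bigr|
 \le|\omega_{m,j}|\,\|R_{m,j}\|_{1,\le14}\,\|P_{g-m,n-j}\|_{1,\le14}\,\|\widehat H_{g-1-m}\|_{1,\le14}.
\]

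Next I would apply \eqref{eq:Pnorm} with $g-m\ge3$, which gives $\|P_{g-m,n-j}\|_{1,\le14}\le (g-m+n-j-2)/(g-m-2)$. The factor $g-m-2$ cancels against the numerator of $\omega_{m,j}$, leaving
\[
 |\omega_{m,j}|\,\|P_{g-m,n-j}\|_{1,\le14}\le (2\pi)^m\frac{1}{g-2}\cdot\frac{\fall{n}{j}\,(g+n-m-j-2)}{\fall{g+n-3}{m+j}}.
\]
The key step is to show that
\[
 \frac{\fall{n}{j}\,(g+n-m-j-2)}{\fall{g+n-3}{m+j}}\le\frac{1}{\fall{g+n-3}{m-1}}.
\]
To do this, write $\fall{g+n-3}{m+j}=\fall{g+n-3}{m-1}\cdot\fall{g+n-m-2}{j+1}$, where $\fall{g+n-m-2}{j+1}$ is the product $(g+n-m-2)(g+n-m-3)\cdots(g+n-m-j-2)$. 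Its last factor cancels $(g+n-m-j-2)$. It then remains to check $\fall{n}{j}\le\fall{g+n-m-2}{j}$, which holds factor by factor since $g-m-2\ge1>0$.

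Summing over $j$ then produces $\sum_j\|R_{m,j}\|_{1,\le14}\le r_m$, which is the first claim. This factorial bookkeeping is the only delicate point; there the inequality $g-m-2\ge 1$ (from $m\le g-3$) matters, and the case $j=n$ with $m\ge1$ is still fine.

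For $\partial_\Gamma$, I would return to \eqref{eq:raw-first} with $m=g-2$ and $m=g-1$, before any normalization. For $m=g-1$ we have $H_0=1$, and $[z^{n-j}](1-z)^{1-w}$ is a polynomial in $w$ whose $\ell^1$-norm up to degree 14 is at most $2$. This is because $(1-z)(1-z)^{-w}$ has coefficients $[z^k](1-z)^{-w}-[z^{k-1}](1-z)^{-w}$, each with nonnegative coefficients in $w$ and total sum $1$ at $w=1$. For $m=g-2$ we have $H_1(w)$ and $[z^{n-j}](1-z)^{-w}$. The latter has norm at most $1$. For $H_1$, note $H_1(w)=-(B_2(1-w)-B_2)/2=(w-w^2)/2$, which has norm $1$.

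The Kronecker delta term only occurs for $g=1$, outside this range, so it can be ignored. Summing over $j$ and dividing by $\Sfac_{g,n}$ gives $|\partial_\Gamma|\le(r_{g-2}+2r_{g-1})/\Sfac_{g,n}$.
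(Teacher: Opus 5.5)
Your proposal is correct and follows essentially the same route as the paper. You bound each term by the product of truncated $\ell^1$ norms, cancel the factor $g-m-2$ and the last factor of $\fall{g+n-3}{m+j}$, and compare $\fall{n}{j}$ with $\fall{g+n-m-2}{j}$. For $\partial_\Gamma$ you use $\|H_0\|_{1,\le14}=\|H_1\|_{1,\le14}=1$ together with the coefficient-norm bounds $1$ and $2$ for $[z^N](1-z)^{-w}$ and $[z^N](1-z)^{1-w}$. The only difference is that you spell out the submultiplicativity of the truncated norm and why the bound $2$ holds, both of which the paper leaves implicit.
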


\begin{proof}
Multiply \eqref{eq:omega} by \eqref{eq:Pnorm}.  The factor $g-m-2$ cancels, as does the last factor of $\fall{g+n-3}{m+j}$.  Then use
\[
 \fall{g+n-3}{m+j-1}
 =\fall{g+n-3}{m-1}\fall{g+n-m-2}{j},
 \qquad
 \frac{\fall{n}{j}}{\fall{g+n-m-2}{j}}\le1,
\]
and sum over $j$. For the final two terms, we note that $H_0=1$ and $H_1=(w-w^2)/2$, so that $\|H_1\|_{1,\le14}=1=\|H_0\|_{1,\le14}$.
Also note that the sum of the absolute coefficients of $[z^N](1-z)^{-w}$ is one (evaluate at $w=1$), while the corresponding sum for $[z^N](1-z)^{1-w}$ is at most two. 
This yields 
\begin{align*}
\Sfac_{g,n}|\partial_\Gamma(g,n)|&\leq 
\sum_{j=0}^{\min(g-2,n)}
\left|
 \Trunc{\Gamma}
 \Bigl(R_{g-2,j}H_{1} [z^{n-j}](1-z)^{-w}\Bigr)
 \right|
 \\&\quad\quad\quad\quad\quad\quad+
 \sum_{j=0}^{\min(g-1,n)}
\left|
 \Trunc{\Gamma}
 \Bigl(R_{g-1,j} H_{0}[z^{n-j}](1-z)^{1-w} \Bigr)
 \right|
 \\
 &\leq 
 \sum_{j=0}^{\min(g-2,n)}
 \|R_{g-2,j}\|_{1,\leq 14}
 + 2
 \sum_{j=0}^{\min(g-1,n)}
\|R_{g-1,j}
 \|_{1,\leq 14} \leq r_{g-2}+2r_{g-1}. \qedhere
\end{align*}
\end{proof}
Expanding $\Rcal$ through $u^2$ gives
\begin{equation}\label{eq:R-low}
 R_{0,0}=1,
 \qquad
 R_{1,0}=-1+\frac12w+\frac12w^2,
 \qquad
 R_{1,1}=\frac12w-\frac12w^2,
 \qquad
 r_2=\frac{55}{12}.
\end{equation}
We define the leading term by keeping precisely the terms with $m=0,1$ in \eqref{eq:normalized-full}:
\begin{align}
 M_\Gamma(g,n):={}&
 \Trunc{\Gamma}\bigl(P_{g,n}\widehat H_{g-1}\bigr)\notag\\
 &+\omega_{1,0}\Trunc{\Gamma}\bigl(
 R_{1,0}P_{g-1,n}\widehat H_{g-2}\bigr)\notag\\
 &+\mathbf 1_{n\ge1}\,\omega_{1,1}\Trunc{\Gamma}\bigl(
 R_{1,1}P_{g-1,n-1}\widehat H_{g-2}\bigr).
 \label{eq:M-leading}
\end{align}
The error term is the scalar majorant of the remaining summands of \eqref{eq:normalized-full}
\begin{equation}\label{eq:E-error}
 \Ecal(g,n):=
 \sum_{m=2}^{g-3}
 \frac{\|\widehat H_{g-1-m}\|_{1,\le14}r_m(2\pi)^m}
 {(g-2)\fall{g+n-3}{m-1}}
 +\frac{r_{g-2}+2r_{g-1}}{\Sfac_{g,n}} \geq 0.
\end{equation}

We will derive the $g\geq 2000$-part of Theorem \ref{thm:main nonvanishing} from the following nonvanishing criterion.
\begin{prop}\label{prop:criterion}
For \(g\ge4\), \(n\ge0\), and \(\Gamma\in\{10,14\}\),
\[
 \left|
 \frac{(-1)^{n+1}c^{(\Gamma)}_{g,n}}{\Sfac_{g,n}}
 -M_\Gamma(g,n)\right|
 \le\Ecal(g,n).
\]
Consequently, if
\(
 \max_{\Gamma\in \{10,14\}}|M_\Gamma(g,n)|>\Ecal(g,n),
\)
then \(a_{g,n}\ne0\) or \(b_{g,n}\ne0\).
\end{prop}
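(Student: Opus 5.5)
The plan is to read the inequality off from the decomposition \eqref{eq:normalized-full} together with Lemma~\ref{lem:term-bound}. For $g\ge4$ we have $g-3\ge1$, so the sum over $m$ in \eqref{eq:normalized-full} really contains the terms $m=0$ and $m=1$. These are exactly what \eqref{eq:M-leading} keeps.

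The first step is to check that the $m=0,1$ terms of \eqref{eq:normalized-full} coincide with $M_\Gamma(g,n)$.
\begin{itemize}
\item For $m=0$ only $j=0$ occurs. Here $R_{0,0}=1$ by \eqref{eq:R-low}, and $\omega_{0,0}=1$ from \eqref{eq:omega}, since $\fall{n}{0}=\fall{g+n-3}{0}=1$ and the factor $(g-2)/(g-2)$ equals one. This gives $\Trunc{\Gamma}(P_{g,n}\widehat H_{g-1})$.
\item For $m=1$ the index $j$ runs over $0\le j\le\min(1,n)$. So the $j=1$ term is present exactly when $n\ge1$, which accounts for the indicator $\mathbf 1_{n\ge1}$.
\end{itemize}
I would also confirm that \eqref{eq:normalized-full} itself is legitimate. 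One substitutes \eqref{eq:binomial-P}, applied with $g-m$ and $n-j$ in place of $g$ and $n$ (valid since $g-m\ge3$ for $m\le g-3$), into \eqref{eq:raw-first}. One then checks that $\Sfac_{g,n}\,\omega_{m,j}\binom{g-m+n-j-3}{n-j}\Sfac_{g-m}$ reproduces the factor $\Sfac_{g,n}\binom{\cdot}{\cdot}H/\widehat H$. This is a routine factorial identity: $(g-m-3)!/(g-3)!$ combined with the binomial coefficients gives $\fall{n}{j}/\fall{g+n-3}{m+j}$ times $(g-m-2)/(g-2)$. The paper states this, so I would simply cite it.

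The second step is to bound the difference. Subtracting $M_\Gamma(g,n)$ leaves the terms with $2\le m\le g-3$ plus $\partial_\Gamma(g,n)$. By the triangle inequality and Lemma~\ref{lem:term-bound}, applied for each fixed $m\in[2,g-3]$ and to $\partial_\Gamma$, the absolute value is at most the sum of the stated majorants. That sum is literally $\Ecal(g,n)$ as defined in \eqref{eq:E-error}.

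The consequence follows immediately. Choose $\Gamma\in\{10,14\}$ with $|M_\Gamma(g,n)|>\Ecal(g,n)$. The reverse triangle inequality gives $|c^{(\Gamma)}_{g,n}|/\Sfac_{g,n}\ge|M_\Gamma|-\Ecal>0$, and $\Sfac_{g,n}>0$, so $c^{(\Gamma)}_{g,n}\ne0$. Since $a_{g,n}=c^{(10)}_{g,n}$ and $b_{g,n}=c^{(14)}_{g,n}$, this is the claim.

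The only real obstacle is bookkeeping: making sure the $m=0,1$ terms match \eqref{eq:M-leading} exactly, including the edge case $n=0$. Beyond that, I expect no analytic difficulty, since all the estimates are already packaged in Lemma~\ref{lem:term-bound}.
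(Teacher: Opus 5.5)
Your proof is correct and is the paper's own argument. The paper likewise applies Lemma~\ref{lem:term-bound} to every summand of \eqref{eq:normalized-full} not kept in \eqref{eq:M-leading}, namely the terms with $2\le m\le g-3$ and $\partial_\Gamma(g,n)$, sums the bounds to obtain exactly $\Ecal(g,n)$, and concludes with the reverse triangle inequality; your check that the $m=0,1$ terms give $M_\Gamma(g,n)$ (via $\omega_{0,0}=R_{0,0}=1$ and the indicator $\mathbf 1_{n\ge1}$ at $n=0$) is just the bookkeeping the paper leaves implicit.
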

\begin{proof}
Apply Lemma \ref{lem:term-bound} to every term not included in \eqref{eq:M-leading}, sum the resulting bounds, and use the reverse triangle inequality.
\end{proof}

\subsection{Analytic bounds and proof of Theorem \ref{thm:main nonvanishing}}

The following bounds verify the criterion of Proposition~\ref{prop:criterion} for all $g\geq2000$ and $n\geq0$.
\begin{prop}\label{prop:M lower}
For each $g\geq 2000$ and $n\geq 0$ we have that 
\begin{equation}\label{equ:M lower}
\max_{\Gamma\in \{10,14\}}|M_\Gamma(g,n)| \geq \frac 15.
\end{equation}
\end{prop}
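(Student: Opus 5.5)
We will make the leading term explicit and then compare it with its limiting profile as $g\to\infty$ with the ratio $x:=n/g$ held fixed.

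\textbf{Limiting profile.} First we need the large-$h$ asymptotics of $\widehat H_h(w)$. Write $\log\Hcal(\tau,w)$ as a power series in $\tau$. The coefficients $B_{r+1}(1-w)-B_{r+1}$ grow like $(r+1)!/(2\pi)^{r+1}$ times the trigonometric factors coming from the Fourier expansion of the Bernoulli polynomials. So $\Hcal$, viewed as a function of $\tau$, has its dominant singularities at $\tau=\pm 2\pi i$. A Darboux/transfer argument, as in \cite[Section~6]{CLPW}, then gives
\[
\widehat H_h(w)=\phi_{h\bmod 2}(w)+O(1/h)
\]
with explicit polynomials $\phi_0,\phi_1$, uniformly in the coefficients of degree $\le 14$. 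The normalization $\Sfac_{h+1}$ is chosen exactly so that this limit is finite.

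Next consider $P_{g,n}(w)$. Its coefficients of $w^k$ are elementary symmetric functions of $1/(g-2+i)$ for $0\le i<n$, so $P_{g,n}(w)=\exp\bigl(w\log\tfrac{g+n}{g}+O(1/g)\bigr)$ coefficientwise. Hence it converges to $(1+x)^w$ uniformly for $x$ in compacts. The terms with $\omega_{1,j}$ are $O(1)$ but carry the factor $2\pi/(g+n)$ relative to $m=0$, so they are $O(1/(g(1+x)))$ and we drop them in the limit. Thus
\[
M_\Gamma(g,n)=\Trunc{\Gamma}\bigl((1+x)^w\phi_{g\bmod 2}(w)\bigr)+O(1/g),
\]
and the limiting function is $f_{\Gamma,\epsilon}(x):=\Trunc{\Gamma}\bigl(e^{w\log(1+x)}\phi_\epsilon(w)\bigr)$. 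This is a polynomial of degree $\le\Gamma$ in $y=\log(1+x)$. Its zeros are the critical slopes.

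\textbf{Nonvanishing of the maximum.} We then show that for each parity $\epsilon$, $\max(|f_{10,\epsilon}(x)|,|f_{14,\epsilon}(x)|)\ge 1/5+\delta$ for all $x\ge0$, with an explicit margin $\delta>0$. Since these are polynomials in $y\in[0,\infty)$, we handle two regimes.

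For large $y$, the leading monomials $y^{10}$ and $y^{14}$ dominate, and the bound is easy.

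For $y$ in a compact interval, we do a rigorous interval-arithmetic check. This uses the disjointness of the two zero sets from Figure~\ref{fig:first-critical-slopes} and Remark~\ref{rem:critical-slopes}.

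\textbf{Making the errors explicit.} Finally we make the $O(1/g)$ errors explicit for $g\ge2000$ and uniform in $n$, so that they stay below $\delta$. This has three parts:
\begin{itemize}
\item an explicit remainder for $\widehat H_h-\phi_{h\bmod2}$, from the subleading singularities at $\pm4\pi i$, which are exponentially smaller;
\item an explicit bound on $\|P_{g,n}-(1+x)^w\|_{1,\le14}$ using $\sum_i 1/(g-2+i)^2\le 1/(g-3)$;
\item the explicit $m=1$ terms.
\end{itemize}
We substitute the discrete $x=n/g$ directly, so no interpolation is needed.

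\textbf{Main obstacle.} The main obstacle is the uniformity in $n$, both for unbounded $x$ and near the critical slopes. For large $x$ the coefficients of $(1+x)^w$ grow like $y^k/k!$. We therefore must compare relative rather than absolute errors, bounding the error by $\epsilon\cdot\Trunc{\Gamma}$ of the absolute-value series. We expect to need a $y$-dependent margin, and possibly to normalize by $(1+y)^{\Gamma}$, to close the argument.
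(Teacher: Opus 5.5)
\textbf{Verdict: there is a genuine gap.} It sits exactly at the step you flag as the main obstacle, namely uniformity for unbounded $y=\log(1+n/g)$, and the remedies you sketch cannot work for even $g$.

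Your compact-range plan is a workable variant of the paper's Case A, provided you use relative margins. That plan is a limit profile whose critical slopes are disjoint for $\Gamma=10,14$, plus explicit error bounds on a bounded $y$-interval. The paper instead keeps $x=1/g\in[0,1/2000]$ as an interval variable and evaluates the full leading term, including the $m=1$ summands, directly.

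\textbf{Why the large-$y$ regime fails for even $g$.}
\begin{itemize}
\item Since $\widehat H_h(0)=0$, the profile $\Trunc{\Gamma}(e^{yw}\phi)$ has degree at most $\Gamma-1$ in $y$, not $\Gamma$ as in your ``$y^{10}$ and $y^{14}$''.
\item For even $g$ the top coefficient vanishes exactly. Here $h=g-1$ is odd, so $[w]\widehat H_h=B_h(1)/(h\,\Sfac_{h+1})=0$, and the main term grows only like $y^{\Gamma-2}$.
\item Suppose you control $\widehat H_{g-1}-\phi$ only in the norm $\|\cdot\|_{1,\le14}$, say by $C/g^2$. This leaves room for a spurious linear coefficient, producing an error of size $Cy^{\Gamma-1}/((\Gamma-1)!\,g^2)$.
\item The ratio of that error to the main term grows like $y/g^2$. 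Since $n$ is unbounded for each fixed $g$, this regime always occurs. Neither normalizing by $(1+y)^\Gamma$ nor comparing against the absolute-value series (itself of degree $\Gamma-1$) can repair it.
\item The $m=1$, $j=0$ summand also has a nonzero $y^{\Gamma-1}$ coefficient, proportional to $[w]\widehat H_{g-2}\neq0$. It is harmless only because $\omega_{1,0}$ carries the factor $s=g/(g+n)=e^{-y}$, so $ys$ stays bounded.
\item The $j=1$ summand has no $y^{\Gamma-1}$ term, because $R_{1,1}(0)=0$.
\end{itemize}
So the large-$y$ step needs these exact identities, not norm estimates.

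\textbf{How the paper closes it.} For fixed $(x,s)$, the enclosure \eqref{eq:Mcal} is a polynomial in $L$ of degree at most $\Gamma-1$. For $L\ge32$ the paper uses $\Gamma=10$ alone and substitutes $\vartheta=1/L$.
\begin{itemize}
\item For odd $g$ it checks $\varepsilon\vartheta^9\Mcal_{10,\rho}-\frac15\vartheta^9>0$ on a compact box.
\item For even $g$ it first sets the $w$-coefficient of $\mathbf H_{\rho,0}$ to $[0,0]$ exactly, via \eqref{eq:H-linear-parity}. It then splits off the top term as $sJ_\rho L^9$ and checks $\varepsilon(\vartheta^8A_\rho+\kappa J_\rho)-\frac15\vartheta^8>0$, with $\kappa=Ls\in[0,32e^{-32}]$ as an extra compact variable.
\end{itemize}

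\textbf{Further corrections to your asymptotics for $\widehat H_h$.}
\begin{itemize}
\item $\Hcal(\tau,w)$ is factorially divergent in $\tau$: its coefficients are of size $\Sfac_{h+1}=(h-1)!/(2\pi)^{h+1}$. So there are no singularities at $\tau=\pm2\pi i$ to which Darboux or transfer theorems apply.
\item The limit of $\widehat H_h$ is $\mathscr L_{h\bmod4}$. It depends on $h\bmod 4$, and on parity only up to sign.
\item The $O(1/h)$ correction is the $r=1$ term $\frac{\pi}{h}(w-w^2)\mathscr L_{h-1}$ of the recursion \eqref{eq:H-recurrence}. It does not come from the modes at $\pm4\pi i$, which contribute only the $O(2^{-h})$ error $F_h$.
\end{itemize}
The paper obtains these asymptotics from that recursion together with the Fourier expansion of the Bernoulli polynomials.
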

\begin{prop}\label{prop:E upper}
For each $g\geq 2000$ and $n\geq 0$ we have that 
\begin{equation}\label{equ:E upper}
\Ecal(g,n) \leq \frac 1{30}.
\end{equation}
\end{prop}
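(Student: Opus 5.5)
The statement to prove is Proposition~\ref{prop:E upper}, the uniform bound $\Ecal(g,n)\le\frac1{30}$ for $g\ge2000$ and $n\ge0$. The plan is to bound the two pieces of \eqref{eq:E-error} separately, using three ingredients: growth estimates for $r_m$, uniform estimates for $\|\widehat H_h\|_{1,\le14}$, and the monotonicity of $\Ecal$ in $n$.

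\emph{Monotonicity in $n$.} Every term of $\Ecal(g,n)$ is decreasing in $n$.
\begin{itemize}
\item The falling factorials $\fall{g+n-3}{m-1}$ increase with $n$.
\item The factor $\Sfac_{g,n}=\binom{g+n-3}{n}\Sfac_g$ also increases with $n$.
\end{itemize}
Hence it suffices to treat $n=0$, where $\fall{g-3}{m-1}$ and $\Sfac_g=(g-2)!/(2\pi)^g$ appear.

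\emph{Bounds on $\widehat H_h$.} By \eqref{eq:H-definition}, the function $\Hcal(\tau,w)$ has radius of convergence governed by the Bernoulli generating function. Indeed, $B_{r+1}(1-w)-B_{r+1}$ grows like $(r+1)!/(2\pi)^{r+1}$ times a bounded factor (for bounded $w$-coefficients up to degree $14$). A Cauchy/saddle estimate on a circle $|\tau|=\rho$ slightly less than $2\pi$ would give $|[w^k]H_h|\lesssim$ const$\cdot (h-1)!/(2\pi)^{h+1}\cdot$ poly$(h)$, but I would rather prove directly, by induction using the recursion $hH_h=\sum_r(\text{coefficient})\,H_{h-r}$ obtained from logarithmic differentiation, that $\|\widehat H_h\|_{1,\le14}\le K$ for an explicit constant $K$ and all $h$. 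The small $h$ would be verified by exact computation, and the induction would rest on the dominant contribution coming from the $r=h$ Bernoulli term, since $\Sfac_{h+1}=(h-1)!/(2\pi)^{h+1}$ matches $B_{h+1}$-asymptotics.

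\emph{Bounds on $r_m$.} Similarly, from \eqref{eq:Uell-expanded} and Lemma~\ref{lem:ell range}, the coefficients of $\log\Rcal$ at $u^m$ have polynomial-in-$w$ size bounded geometrically, like $C^m$ times factorial growth at most $\sim (m/\ell)!$ coming from Bernoulli terms with $\ell\ge2$, i.e.\ roughly $(m/2)!\,(2\pi)^{-m/2}$. I would aim for a majorant bound $r_m\le A\,\beta^m\,\lfloor m/2\rfloor!$, obtained by replacing all coefficients by absolute values (majorant series in $u$ with $z$ set to $1$, $w$ norms bounded by evaluation at $|w|=1$ after trimming degrees above 14), computing $r_m$ exactly for $m\le M_0$ (say $M_0=60$) by computer.

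\emph{Summation.} With these, the $m$th term at $n=0$ is at most $K A (2\pi\beta)^m\lfloor m/2\rfloor!/((g-2)\fall{g-3}{m-1})$. For $m=2$ this is $K r_2(2\pi)^2/(g-2)$, with $r_2=55/12$ from \eqref{eq:R-low}, which is $\approx 0.1K$ times $10^{-1}$ at $g=2000$. The ratio between consecutive terms is roughly $2\pi\beta\sqrt{m}/(g-m)$, which is small for $m\ll g$ and still less than $1$ up to $m=g-3$, because $\lfloor m/2\rfloor!$ is negligible against $\fall{g-3}{m-1}$. So the sum is geometric-dominated by the first terms, giving about $c/g$. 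The boundary term $(r_{g-2}+2r_{g-1})/\Sfac_g$ is at most of order $\beta^g (g/2)!(2\pi)^g/(g-2)!$, which is astronomically small.

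The main obstacle is obtaining rigorous, uniform-in-$h$ and uniform-in-$m$ bounds on $\|\widehat H_h\|$ and $r_m$. Tracking the constants well enough that the total stays below $1/30$ will likely need computer-verified exact values for small indices, combined with a crude tail induction.
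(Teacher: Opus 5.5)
Your outline follows the paper's. You reduce to the worst case in $n$ (your monotonicity argument is correct), bound $\|\widehat H_q\|_{1,\le14}$ uniformly by induction on \eqref{eq:H-recurrence} with computed base cases, bound $r_m$ by a geometric-times-factorial majorant of $\log\Rcal$, and then sum.

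\textbf{The summation argument fails as written.} You claim consecutive terms keep decreasing up to $m=g-3$. For your majorant $t_m\propto(2\pi\beta)^m\lfloor m/2\rfloor!/\fall{g-3}{m-1}$ one has $t_{m+2}/t_m=(2\pi\beta)^2(\lfloor m/2\rfloor+1)/((g-m-2)(g-m-3))$. This exceeds $1$ once $g-m\lesssim2\pi\beta\sqrt{m/2}$; for $\beta=6$ and $g=2000$ that happens from about $m=1100$ on. So the terms are U-shaped, not geometrically decreasing. The near-diagonal terms are also exactly the new ones that appear as $g$ grows, so nothing you wrote gives a bound uniform in $g$.

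The paper splits the sum at $m=1998$:
\begin{itemize}
\item For $2\le m\le1997$ it uses $(g-2)\fall{g+n-3}{m-1}\ge1998\fall{1997}{m-1}$, which is the value at $g=2000$, $n=0$, and evaluates the resulting finite sum exactly.
\item For $1998\le m\le g-3$ it uses $(g-2)\fall{g+n-3}{m-1}\ge(m+1)!$. This turns the tail into a $g$-independent series with two-step ratio $72\pi^2/(m+3)<1$.
\end{itemize}

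\textbf{The constants carry the proof, and your estimates misjudge them.}

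First, smallness of the tail and of the boundary term $(r_{g-2}+2r_{g-1})/\Sfac_g$ at $g=2000$ is delicate. With $r_m\le6^m\Gamma(m/2+1)$ (Corollary \ref{cor:r-bound}) the boundary bound is below $4\cdot10^{-11}$. Replacing $6$ by $6.1$ multiplies it by roughly $e^{33}$, and it exceeds $1$. So the majorant has to be sharp, not crude; this is the role of Lemmas \ref{lem:lambda-majorant} and \ref{lem:T-contraction}.

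Second, your $m=2$ term is missing the factor $g-3$. It equals $\|\widehat H_{g-3}\|_{1,\le14}\,r_2(2\pi)^2/((g-2)(g-3))$, so the sum is $O(g^{-2})$, not $c/g$. A $c/g$ bound with $c\approx Kr_2(2\pi)^2$ would be far above $1/30$.

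Third, there is little room for $K$. At $g=2000$ the $m=2$ term is about $4.5\cdot10^{-5}\|\widehat H_{1997}\|_{1,\le14}$. Since $\widehat H_q$ is within $O(1/q)$ of $\mathscr L_q$ and $\|\mathscr L_q\|_{1,\le14}\approx533$, this term is at least $0.024$ in $\Ecal$ itself. With the paper's $K=600$ the finite part of \eqref{eq:E-majorized} is just under $0.033<1/30$. So, with the paper's $r_m$ bounds, $K$ can exceed its asymptotic value by only about 15\%, and a ``crude tail induction'' will not do. The paper needs three ingredients here:
\begin{itemize}
\item base cases for all $q\le99$;
\item the two-mode Fourier bound $\|\ell_q\|_{1,\le14}<535$ for $q\ge50$;
\item the estimate $\sum_{r\ge2}c_{q,r}\le3/(q(q-1))$.
\end{itemize}

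\textbf{Where your plan would improve on the paper.} Computing $r_m$ exactly for small $m$ buys real slack. Already \eqref{eq:r-log-recurrence} with the tabulated $\lambda_m$ gives $r_3\le17.1$, against $6^3\Gamma(5/2)\approx287$. That cuts the $m=3$ contribution from about $0.0054$ to about $3\cdot10^{-4}$.
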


Theorem \ref{thm:main nonvanishing} now follows from these propositions and Proposition \ref{prop:criterion}.
\begin{proof}[Proof of Theorem \ref{thm:main nonvanishing}]
The region $g\leq 2000$ is covered by Theorem \ref{thm:mainA}.
By the above propositions we have that for any $g\geq 2000$, $n\geq 0$
\[
\max_{\Gamma\in \{10,14\}}|M_\Gamma(g,n)| - \Ecal(g,n)
\geq \frac 15 - \frac 1{30} =\frac16 > 0.
\]
Hence by Proposition \ref{prop:criterion} it follows that \(a_{g,n}\ne0\) or \(b_{g,n}\ne0\) for any $g\geq 2000$, $n\geq 0$.
\end{proof}

It remains to prove Propositions~\ref{prop:M lower} and~\ref{prop:E upper}.

\subsection{Proof of Proposition \ref{prop:E upper} }
\label{subsec:error-proof}
To show the error bound \eqref{equ:E upper}, we use the following auxiliary estimates, whose proofs are deferred to Section \ref{subsec:auxiliary-estimates}:
\begin{equation}\label{eq:auxiliary-summary}
 \|\widehat H_q\|_{1,\le14}<600\quad(q\ge1),
 \qquad
 r_2=\frac{55}{12},
 \qquad
 r_m\le6^m\Gamma\left(\frac m2+1\right)\quad(m\ge3).
\end{equation}
For $2\le m\le1997$, $g\ge2000$, and $n\ge0$,
\[
 (g-2)\fall{g+n-3}{m-1}
 \ge1998\fall{1997}{m-1}.
\]
If $1998\le m\le g-3$, then
\[
 (g-2)\fall{g+n-3}{m-1}\ge(m+1)!.
\]
Finally, $\Sfac_{g,n}\ge\Sfac_g=(g-2)!/(2\pi)^g$.  Substituting these bounds into \eqref{eq:E-error} gives
\begin{equation}
\label{eq:E-majorized}
\begin{aligned}
 \Ecal(g,n)\le{}&
 \frac{600(55/12)(2\pi)^2}{1998\cdot1997}\\
 &+\sum_{m=3}^{1997}
 \frac{600\,6^m\Gamma(m/2+1)(2\pi)^m}
 {1998\fall{1997}{m-1}}\\
 &+\sum_{m=1998}^{\infty}
 \frac{600\,6^m\Gamma(m/2+1)(2\pi)^m}{(m+1)!}\\
 &+\sup_{h\ge2000}
 \frac{\bigl(6^{h-2}\Gamma(h/2)
 +2\cdot6^{h-1}\Gamma((h+1)/2)\bigr)(2\pi)^h}{(h-2)!}.
\end{aligned}
\end{equation}
The first two lines form a finite sum.  We now reduce the remaining infinite series and supremum to finitely many quantities.

The quotient of the terms with indices $m+2$ and $m$ in the infinite series is
\[
 \frac{72\pi^2}{m+3}.
\]
It decreases with $m$.  Splitting the series into its even and odd subsequences gives
\begin{align}
 &\sum_{m=1998}^{\infty}
 \frac{600\,6^m\Gamma(m/2+1)(2\pi)^m}{(m+1)!}\notag\\
 &\qquad\le
 \frac{600\,6^{1998}\Gamma(1000)(2\pi)^{1998}}
 {1999!\left(1-72\pi^2/2001\right)}
 +\frac{600\,6^{1999}\Gamma(2001/2)(2\pi)^{1999}}
 {2000!\left(1-72\pi^2/2002\right)}.
 \label{eq:series-geometric}
\end{align}

The two summands in the last line of \eqref{eq:E-majorized} have two-step ratios
\begin{equation}\label{eq:boundary-ratios}
 \frac{72\pi^2}{h-1},
 \qquad
 \frac{72\pi^2(h+1)}{h(h-1)}.
\end{equation}
Both are smaller than one and decreasing for $h\ge2000$.  Hence each parity subsequence is decreasing, and the supremum in \eqref{eq:E-majorized} is the maximum of the two values obtained at $h=2000$ and $h=2001$.
We evaluate these expressions by computer:

\begin{inputbox}
\begin{computerverification}
[\texttt{error\_finite.py}]
\label{input:error-bound}
The checker verifies
\begin{align*}
 &\frac{600(55/12)(2\pi)^2}{1998\cdot1997}
 +\sum_{m=3}^{1997}
 \frac{600\,6^m\Gamma(m/2+1)(2\pi)^m}
 {1998\fall{1997}{m-1}}
 <\frac{33}{1000}, %
 \\
 &\frac{600\,6^{1998}\Gamma(1000)(2\pi)^{1998}}
 {1999!\left(1-72\pi^2/2001\right)}
 +\frac{600\,6^{1999}\Gamma(2001/2)(2\pi)^{1999}}
 {2000!\left(1-72\pi^2/2002\right)}
 <10^{-9},%
\end{align*}
and the maximum of the two explicit boundary expressions at $h=2000,2001$ is smaller than $377/10^{13}$. It follows that
\[
\frac{33}{1000}+10^{-9}
 +\frac{377}{10^{13}}<\frac1{30}.
\]
\end{computerverification}
\end{inputbox}

Combining \eqref{eq:E-majorized}, \eqref{eq:series-geometric}, the monotonicity following \eqref{eq:boundary-ratios}, and Computer Verification \ref{input:error-bound} proves Proposition \ref{prop:E upper}.
\hfill\qed

\subsection{Interval arithmetic}
Interval arithmetic is a computational tool that allows for proving that the values of a scalar function $f(x)$ are contained in an interval $J$ if the variable $x$ varies over a compact interval $I$, or a product of such in higher dimensions. We give a short overview here and refer to the literature for more details, see \cite[§5]{Rump2010} or \cite{MKCinterval}.
Let $\mathcal I$ be the set of compact intervals $[a,b]$ in $\R$. Then we may extend the standard arithmetic operations to $\mathcal I$ by saying that the operation $\star$ (addition, subtraction, multiplication, division) on $I,J\in \mathcal I$ is 
\[
I \star J = \{x \star y \mid x\in I, y\in J\}.
\]
In the case that $\star$ is division, we additionally require that $0\notin J$ to make the operation well-defined.
In all cases the endpoints of the resulting interval $I \star J$ are elementary functions of the endpoints of $I$ and $J$, for example
\[
[a,b] + [c,d] = [a+c, b+d].
\]
Interval arithmetic is carried out by a computer program, the \emph{interval checker}, for which we use Arb \cite{Johansson2017arb}. 
Here one has to mind that if the endpoints have to be rounded to fit into finite precision numerical representations, then this rounding is always done outwards.
Let $f(x)$ be some rational arithmetic expression in the variable $x$. Then if the interval checker asserts that $f(I)\subset J$ for some compact intervals $I$ and $J$, then we have proven that $f(x)\in J$ for all $x\in I\subset \R$.
Several remarks are in order:
\begin{itemize}
    \item Note that interval arithmetic does not follow the usual computational rules, for example $x-x=0$ for all $x\in \R$, but $[0,1]-[0,1]=[-1,1]$. Hence strictly speaking the interval checker is not applied to the function $f$, but to an underlying expression tree encoding $f$.
    \item The estimate obtained by an interval checker can be improved by subdividing the interval. For example, subdividing $[0,1]$ into $n$ subintervals $[j/n,(j+1)/n]$ the expression $x-x$ would be evaluated by the interval checker to $[-1/n,1/n]$, thus improving our previous estimate $[-1,1]$. Below we shall indicate how fine we need to make this subdivision to attain useful estimates whenever we use an interval checker.
    \item Constants $c$ that can be represented exactly numerically can just be handled as zero-length intervals $[c,c]$. Other constants such as $\pi$ have to be enclosed in an interval of positive size to allow for numerical evaluation. This is handled by the program internally, we will hide this replacement in formulas.
    \item Non-rational functions like $\exp(-)$ or $\log(-)$ can be handled by sandwiching them between rational approximations, valid on the parameter range we consider.
\end{itemize}

The main strategy in the proof of Proposition \ref{prop:M lower} is to rewrite the leading order term $M_\Gamma(g,n)$ in a way that makes it amenable to be probed by an interval checker and proven to be nonzero, using variables substituting $g,n$ that vary over compact intervals.

We will use interval arithmetic not only for numerical programs, but also to encode analytic estimates on the objects we consider. It is easiest to introduce the relevant notation in one (albeit trivial) example. 
Suppose we consider a function $f(x)$, for example $f(x)=\sin(x)$, for $x\in [0, \pi/2]$. Then we may make the estimates $2x/\pi\leq \sin(x) \leq x$, and encode that estimate in an interval expression 
\[
\mathbf f(x) =\Bigl[\frac{2}{\pi} x, x\Bigr].
\]
Then $f(x)\in \mathbf f(x)$ for all $x\in [0,\pi/2]$.
Generally, we will follow the convention of indicating such interval estimates by using boldface letters, like $\mathbf f$ for the interval estimate of $f$ above. Note that here we might again insert an interval for $x$, to get an estimate for the range of $f$, for example 
\[
\mathbf f\Bigl(\Bigl[\frac{1}{3}, \frac{1}{2}\Bigr]\Bigr) = \Bigl[\frac{2}{3\pi}, \frac{1}{2}\Bigr],
\]
and we may thus conclude that $f(x)\in [2/(3\pi),1/2]$ for all $x\in [1/3,1/2]$. The latter computation could have been done numerically by our interval checker, apart from the replacement of $\pi$ by a rational interval enclosing it.

We also use similar notation for encoding estimates for the coefficients of polynomials. An interval polynomial in some variable $w$ is simply an expression 
\[
\mathbf p(w) = \sum_{k=0}^n I_k w^k,
\]
with $I_k$ intervals. For some (ordinary) polynomial $p(w)=\sum_{j=0}^m p_j w^j$ in $w$ we will use the notation 
\[
 p(w)\incoeff \mathbf p(w)=\sum_{k=0}^{n}I_kw^k
\]
to mean that $p_k=[w^k]p(w)\in I_k$ for all $k$.
The interval arithmetic and polynomial arithmetic naturally combine, i.e., we can naturally add and multiply interval polynomials. Furthermore, the endpoints of the intervals in an interval polynomial could depend on other variables, and these variables could again be evaluated on intervals by our interval checker, as will happen below.
    
\subsection{Proof of Proposition \ref{prop:M lower}}
To obtain \eqref{equ:M lower} we start from the exact expression \eqref{eq:M-leading} of the leading term, whose magnitude we want to bound from below, i.e.,
\begin{align*}
 M_\Gamma(g,n)={}&\Trunc{\Gamma}\bigl(P_{g,n}\widehat H_{g-1}\bigr)
 +\omega_{1,0}\Trunc{\Gamma}\bigl(R_{1,0}P_{g-1,n}\widehat H_{g-2}\bigr)\\
 &+\mathbf 1_{n\ge1}\,\omega_{1,1}\Trunc{\Gamma}\bigl(R_{1,1}P_{g-1,n-1}\widehat H_{g-2}\bigr),
\end{align*}
with $R_{1,0}=-1+w/2+w^2/2$ and $R_{1,1}=w/2-w^2/2$ as in \eqref{eq:R-low}.
We next replace $g$ and $n$ by the variables (depending on $g,n$)
\begin{equation}\label{eq:x-s-L}
 x:=\frac1g,
 \qquad
 s:=\frac g{g+n},
 \qquad\mbox{and}\qquad
 L:=-\log s=\log\left(1+\frac ng\right).
\end{equation}
Thus
\[
 0\le x\le\frac1{2000},
 \qquad 0<s\le1,
 \qquad L\ge0,
 \qquad s=e^{-L}.
\]
Our goal is to do this replacement, and then (essentially) just run an interval checker on the resulting function to bound $M_\Gamma(g,n)$, using compact ranges of our variables.  
In terms of these variables
\begin{align*}
\omega_{1,0}(g,n) &= 2\pi \frac{g-3}{g-2} \frac{1}{g+n-3}
=2\pi \frac{1-3x}{1-2x} \frac{xs}{1-3xs}
=2 \pi xs \frac{1-3x}{(1-2x)(1-3xs)}%
\\
\omega_{1,1}(g,n) &=2\pi \frac{g-3}{g-2} \frac{n}{(g+n-3)(g+n-4)}
=
2\pi \frac{1-3x}{1-2x} \frac{xs (1-s)}{(1-3xs)(1-4xs)}
\\&=
2\pi xs \frac{(1-3x)(1-s)}{(1-2x)(1-3xs)(1-4xs)}%
\end{align*}
Next consider 
$$
P_{g,n}=\frac{\rise{g-2+w}{n}}{\rise{g-2}{n}}
=\prod_{k=0}^{n-1}\left(1+\frac{w}{g-2+k}\right).
$$
Here the problem is that since the number of factors depends on $n$, we cannot readily replace $n$ by our compact-range variables, or rather this would not yield an elementary function that the interval checker can probe.
Rather, we need a more complicated estimate of the coefficient ranges of the polynomial $P_{g,n}$ as an interval polynomial, and likewise of $P_{g-1,n}$ and $P_{g-1,n-1}$. 

\begin{lem}\label{lem:P-estimate}
Let $(m,j)$ be one of $(0,0),(1,0),(1,1)$, with $n\ge j$. Define $\boldsymbol\beta_{m,j,q}$ by
\begin{align}
 \boldsymbol\beta_{m,j,1}:={}&
 \log(1-(m+j+2)xs)-\log(1-(m+2)x)\notag\\
 &+\left[0,
 \max\left\{0,\frac{x}{1-(m+2)x}
 -\frac{xs}{1-(m+j+2)xs}\right\}\right],
 \label{eq:beta1}\\
 \boldsymbol\beta_{m,j,q}:={}&
 \frac{(-1)^{q+1}}q\Biggl(
 \frac{
 \left(\frac{x}{1-(m+2)x}\right)^{q-1}
 -\left(\frac{xs}{1-(m+j+2)xs}\right)^{q-1}}
 {q-1}\notag\\
 &\hspace{6mm}
 +\left[0, \max\left\{0,
 \left(\frac{x}{1-(m+2)x}\right)^q
 -\left(\frac{xs}{1-(m+j+2)xs}\right)^q\right\}
 \right]\Biggr)
 \quad(2\le q\le14).
 \label{eq:betaq}
\end{align}
The interval endpoints depend on $x$ and $s$. Set $\mathbf v_{m,j,0}=[1,1]$ and define the coefficients $\mathbf v_{m,j,k}$ for $1\le k\le14$ and the polynomial $\mathbf V_{m,j}$ by
\begin{equation}\label{eq:V-recurrence}
 k\mathbf v_{m,j,k}
 =\sum_{q=1}^{k}q\boldsymbol\beta_{m,j,q}\mathbf v_{m,j,k-q}
 \qquad\mbox{and}\qquad
 \mathbf V_{m,j}(x,s,w)
 :=\sum_{k=0}^{14}\mathbf v_{m,j,k}w^k.
\end{equation}
Then
\begin{equation}\label{eq:P-estimate}
 P_{g-m,n-j}(w)
 \incoeff e^{Lw}\mathbf V_{m,j}(x,s,w)
 \pmod{w^{15}}.
\end{equation}
\end{lem}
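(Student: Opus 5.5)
The plan is to pass to logarithms, so that $P_{g-m,n-j}$ becomes a sum of power sums of reciprocals of consecutive integers, and to bound those power sums by integrals. By \eqref{eq:NP},
\[
 P_{g-m,n-j}(w)=\prod_{k=0}^{n-j-1}\Bigl(1+\frac{w}{g-m-2+k}\Bigr),
\]
a product of $n-j$ factors over the integers $t$ with $a\le t\le b$, where $a:=g-m-2\ge1$ and $b:=g+n-m-j-3$. Since all these $t$ are positive, we can take the formal logarithm in $\Q[[w]]$:
\[
 \log P_{g-m,n-j}(w)=\sum_{q\ge1}\frac{(-1)^{q+1}}{q}S_q\,w^q,
 \qquad S_q:=\sum_{t=a}^{b}t^{-q}.
\]
Modulo $w^{15}$ only $S_1,\dots,S_{14}$ matter.

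Next I translate the endpoints into the variables \eqref{eq:x-s-L}. One checks
\[
 \frac1a=\frac{x}{1-(m+2)x}
 \qquad\mbox{and}\qquad
 \frac1{b+1}=\frac{xs}{1-(m+j+2)xs},
\]
using $g+n=g/s$. Both denominators are positive because $g\ge2000$. For the bounds, I use that $f(t)=t^{-q}$ is decreasing on $[a,b+1]$, which gives
\[
 \int_a^{b+1}f\le\sum_{t=a}^{b}f(t)\le\int_a^{b+1}f+f(a)-f(b+1).
\]
When $n=j$ the sum is empty and $a=b+1$, so both sides vanish; the $\max\{0,\cdot\}$ in \eqref{eq:beta1} and \eqref{eq:betaq} keeps the interval well formed in that case.

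For $q=1$ the integral is $\log\frac{b+1}{a}=\log(g+n-m-j-2)-\log(g-m-2)$. Pulling out $\log\frac{g+n}{g}=L$, this equals
\[
 L+\log(1-(m+j+2)xs)-\log(1-(m+2)x).
\]
Hence $S_1\in L+\boldsymbol\beta_{m,j,1}$, exactly as in \eqref{eq:beta1}. For $2\le q\le14$ the integral is $\bigl(a^{1-q}-(b+1)^{1-q}\bigr)/(q-1)$, which is precisely the first summand inside the bracket of \eqref{eq:betaq}. Multiplying by $(-1)^{q+1}/q$ then gives $\frac{(-1)^{q+1}}{q}S_q\in\boldsymbol\beta_{m,j,q}$. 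Here interval arithmetic automatically reorders the endpoints when the scalar is negative.

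We therefore have
\[
 P_{g-m,n-j}(w)=e^{Lw}\exp\Bigl(\sum_{q\ge1}\beta_qw^q\Bigr)
\]
with true real numbers $\beta_q\in\boldsymbol\beta_{m,j,q}$ for $1\le q\le14$. The coefficients $v_k$ of $V:=\exp(\sum_q\beta_qw^q)$ satisfy $v_0=1$ and $kv_k=\sum_{q=1}^kq\beta_qv_{k-q}$, obtained by comparing coefficients in $V'=(\sum_q q\beta_qw^{q-1})V$. For $k\le14$ this recurrence involves only $\beta_1,\dots,\beta_{14}$. By the inclusion monotonicity of interval addition and multiplication, induction on $k$ gives $v_k\in\mathbf v_{m,j,k}$. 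This proves \eqref{eq:P-estimate}, interpreted as saying that $P_{g-m,n-j}e^{-Lw}\incoeff\mathbf V_{m,j}$ modulo $w^{15}$; equivalently, $P$ lies coefficientwise in $e^{Lw}\mathbf V_{m,j}$ truncated at degree $14$.

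The only delicate points are bookkeeping. One must get the endpoint identifications for $a$ and $b+1$ exactly right, and handle the degenerate empty product. One must also make sure the interval bounds are genuine enclosures for every $g\ge2000$ and $n\ge j$, rather than only asymptotic statements. I expect no real obstacle beyond this.
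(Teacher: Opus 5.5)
Your proof is correct and follows the paper's argument essentially step for step: expand $\log P_{g-m,n-j}$ into the power sums $\sum_t t^{-q}$, sandwich each one between $\int_a^{a+n-j}t^{-q}\,dt$ and that integral plus $a^{-q}-(a+n-j)^{-q}$, rewrite the endpoints via $1/a=x/(1-(m+2)x)$ and $1/(a+n-j)=xs/(1-(m+j+2)xs)$, and enclose the exponential through the recursion $kv_k=\sum_q q\beta_q v_{k-q}$ using inclusion monotonicity. One small quibble: $P_{g-m,n-j}e^{-Lw}\incoeff\mathbf V_{m,j}$ implies, but is not equivalent to, coefficientwise enclosure in the interval product $e^{Lw}\mathbf V_{m,j}$; this does no harm, since you prove the stronger form, which is also the one the later arguments treating $\Mcal_{\Gamma,\rho}$ as a polynomial in $L$ rely on.
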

The proof is deferred to Subsection \ref{subsubsec:P-proof}.
We note that applying $\max\{0,\cdots\}$ in the definition of the intervals is strictly speaking not necessary for the statement of the lemma, but we would like the intervals to remain well-defined even for parameter values not covered by the lemma, for which the upper bound might become negative.
Similarly, consider the polynomials $\widehat H_{g-1}(w)$ and $\widehat H_{g-2}(w)$. Here the problem is similar in that the $g$-dependence does not allow for a direct replacement of $g$ by $1/x$ to get an elementary function that the interval checker can work with. We instead proceed as for $P_{g,n}$ and estimate the first 15 coefficients of $\widehat H_h(w)$ by an interval polynomial.
To this end let $\sigma\in\Z/4\Z$ and define the real polynomial
\begin{equation}\label{eq:Lsigma}
 \mathscr L_\sigma(w)
 :=i^{-(\sigma+1)}\sum_{k=1}^{14}
 \frac{(-2\pi i)^k+(-1)^{\sigma+1}(2\pi i)^k}{k!}w^k.
\end{equation}
Only one parity of $k$ survives, depending on the parity of $\sigma$, so the coefficients are real.

\begin{lem}\label{lem:H-estimate}
Let $\rho=g\bmod4$ and $m\in\{0,1\}$.  Define
\begin{align}
 \mathbf H_{\rho,m}(x,w):={}&
 \mathscr L_{\rho-1-m}(w)
 +\frac{\pi x}{1-(m+1)x}(w-w^2)
  \mathscr L_{\rho-2-m}(w)\notag\\
 &+\sum_{k=1}^{14}
 \left[-\frac{13000x^2}{(1-(m+1)x)^2},
        \frac{13000x^2}{(1-(m+1)x)^2}\right]w^k,
 \label{eq:H-box}
\end{align}
where the subscripts of $\mathscr L$ are read modulo four.  Then, for $g\ge2000$,
\begin{equation}\label{eq:H-estimate}
 \widehat H_{g-1-m}(w)
 \incoeff\mathbf H_{\rho,m}(x,w)
 \pmod{w^{15}}.
\end{equation}
Moreover,
\begin{equation}\label{eq:H-linear-parity}
 [w]\widehat H_q(w)=0
 \qquad(q>1\text{ odd}).
\end{equation}
Whenever \eqref{eq:H-linear-parity} applies, the coefficient interval of $w$ in \eqref{eq:H-box} is replaced by $[0,0]$.
\end{lem}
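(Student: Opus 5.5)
The plan is to analyze $\Hcal(\tau,w)$ as an entire-type function of $\tau$ with poles, and to extract the large-$h$ asymptotics of $H_h(w)$ by singularity analysis.

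\textbf{Closed form for $\log\Hcal$.} From \eqref{eq:Bernoulli},
\[
\sum_r \frac{B_{r+1}(1-w)-B_{r+1}}{r+1}\frac{t^r}{r!}
\]
is, up to elementary terms, the derivative structure of
\[
\frac{e^{(1-w)t}-1}{e^t-1}=\frac{e^{-wt}-e^{-t}}{1-e^{-t}}.
\]
The sum in \eqref{eq:H-definition} is
\[
-\sum_r c_r\tau^r,\qquad c_r=\frac{B_{r+1}(1-w)-B_{r+1}}{r(r+1)}.
\]
I would first rewrite it as a Borel/Laplace-type integral: formally
\[
\sum_r c_r\tau^r=\int_0^\infty \frac{e^{-t/\tau}}{t}\,(\cdots)\,dt .
\]
Better still, I would use the standard asymptotic fact that
\[
\frac{B_{r+1}(1-w)}{(r+1)!}\approx -2\,\mathrm{Re}\,\frac{e^{2\pi i(1-w)}\cdots}{(2\pi i)^{r+1}}.
\]
More precisely, the Fourier expansion reads
\[
B_{r+1}(y)=-(r+1)!\sum_{k\ne0}\frac{e^{2\pi i k y}}{(2\pi i k)^{r+1}}.
\]
Substituting it gives
\[
\log\Hcal=\sum_{k\neq0}\bigl(e^{-2\pi i k w}-1\bigr)\sum_r \frac{(r-1)!\,\tau^r}{(2\pi i k)^{r+1}} .
\]
This is a divergent (Gevrey) series. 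The key point is that the coefficient growth of $H_h$ is dominated by the $k=\pm1$ terms at order $\tau^{h}$ linearly in $\log\Hcal$.

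\textbf{Leading term.} Write
\[
\Hcal=\exp\Bigl(\sum_{r\ge1}c_r\tau^r\Bigr),\qquad c_r\sim (r-1)!\,\alpha_r(w),
\]
with
\[
\alpha_r(w)=\sum_{\pm}\frac{e^{\mp2\pi i w}-1}{(\pm2\pi i)^{r+1}}.
\]
For factorially growing coefficients, the exponential's coefficient satisfies
\[
H_h=c_h+c_{h-1}c_1+O\bigl((h-3)!\bigr),
\]
because products $c_ac_b$ with $a,b\geq2$ are smaller by a factor $1/h^2$ relative to $(h-1)!$. Dividing by $\Sfac_{h+1}=(h-1)!/(2\pi)^{h+1}$:
\begin{itemize}
\item $c_h/\Sfac_{h+1}$ gives $(2\pi)^{h+1}\alpha_h(w)$. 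After expansion mod $w^{15}$ this is exactly $\mathscr L_\sigma(w)$ with $\sigma$ determined by $h\bmod 4$; for $h=g-1-m$ this gives $\sigma=\rho-1-m$. The $k\ge2$ terms contribute $O(2^{-h})$.
\item The cross term $c_1c_{h-1}$, with $c_1=H_1=(w-w^2)/2$, contributes $(w-w^2)/2\cdot 2\pi/(h-1)\cdot\mathscr L_{\sigma-1}$. This equals $\frac{\pi x}{1-(m+1)x}(w-w^2)\mathscr L_{\rho-2-m}$, since $1/(h-1)=x/(1-(m+1)x)$ when $h=g-1-m$.
\end{itemize}
The sign conventions must be matched to \eqref{eq:Lsigma}; I would verify them by comparing with \eqref{eq:D1-general}, which gives the $w$-coefficient exactly.

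\textbf{Error bound.} Everything else must be bounded coefficientwise in $w^k$, $k\le14$, by
\[
13000x^2/(1-(m+1)x)^2\approx 13000/(h-1)^2.
\]
The contributions are:
\begin{itemize}
\item the exact remainder of $c_r$ beyond the $k=\pm1$ Fourier terms, which is geometrically small, $\sum_{k\ge2}k^{-r-1}$;
\item all products of two or more $c$'s other than $c_1c_{h-1}$;
\item the truncation of $e^{\mp2\pi iw}$ at $w^{14}$, which is exact here, since we only need coefficients mod $w^{15}$.
\end{itemize}
For the products, I would bound $\|c_r\|_{1,\le14}\le (r-1)!\,K/(2\pi)^{r+1}$ with an explicit $K$; using $\sum |2\pi|^k/k!\le e^{2\pi}$, $K$ is roughly $2e^{2\pi}$. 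I would then use a majorant series: the normalized coefficients of $\exp$ minus its first two terms are bounded by a convergent sum of the form
\[
\sum_{a+b=h,\ a,b\ge2}\frac{(a-1)!\,(b-1)!}{(h-1)!}+\cdots=O(1/h^2),
\]
with the multi-fold products controlled via a standard lemma on sequences with $(a-1)!(b-1)!\le(a+b-2)!$. Small $r$ is handled separately with exact values of $c_2,c_3$.

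\textbf{Main obstacle.} Getting a clean explicit constant (13000) valid uniformly for $h\ge1998$ will require care; I would compute the first few $\|c_r\|$ exactly and bound the tail crudely.

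\textbf{Parity claim.} \eqref{eq:H-linear-parity} follows immediately from the proof of Proposition~\ref{prop:D1-formula}: $[w]H_q=(-1)^qB_q/q$, which vanishes for odd $q>1$. This also justifies replacing the $w$-coefficient interval by $[0,0]$.
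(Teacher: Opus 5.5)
Your leading-order picture matches the paper: the $k=\pm1$ Fourier modes of $\ell_h$ give $\mathscr L_{\rho-1-m}$, and the first cross term gives the correction. Your parity argument, $[w]H_q=(-1)^qB_q/q$, is also the paper's.

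There is a slip, though. With $h=g-1-m$ one has $x/(1-(m+1)x)=1/h$, not $1/(h-1)$. The exponential's cross term $c_1c_{h-1}$, normalized by $\Sfac_{h+1}$, is $\frac{\pi}{h-1}(w-w^2)\ell_{h-1}$. So relative to the center in the statement you must also absorb $\frac{\pi}{h(h-1)}(w-w^2)\mathscr L_{h-1}$ into the error. Its norm is roughly $3.3\cdot10^3/h^2$.

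The genuine gap is the error bound, which is the real content of the lemma. You only argue $O(1/h^2)$. The scheme you sketch cannot certify the constant $13000$: you expand $\exp$ into products of the $c_r$ and bound each product separately by the triangle inequality with a uniform $K\approx2e^{2\pi}$.

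Already the two $1/h^2$ terms $c_2c_{h-2}$ and $\tfrac12c_1^2c_{h-2}$ normalize to $\frac{\ell_2\,\ell_{h-2}}{2\pi(h-1)(h-2)}$ and $\frac{\pi^3(w-w^2)^2\,\ell_{h-2}}{2\pi(h-1)(h-2)}$. Since $\|\ell_2\|_{1,\le14}=\|\pi^3(w-w^2)^2\|_{1,\le14}=4\pi^3$, each is about $2\pi^2\cdot535/h^2\approx1.06\cdot10^4/h^2$. That holds even with the sharp bound $\|\ell_{h-2}\|_{1,\le14}<535$, and it doubles with your $K$. Together with the slip above you are well above $13000$. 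Only their sum, $\widehat H_2\,\ell_{h-2}/(2\pi(h-1)(h-2))$, is small enough: $\|\widehat H_2\|_{1,\le14}=\tfrac{10}{3}\pi^3$ rather than $8\pi^3$, giving about $8.8\cdot10^3/h^2$.

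A second, smaller issue: a Fourier-derived $K$ is unavailable for $r\le14$. The series for the $w^j$-coefficient of $\ell_r$ diverges when $j\ge r$, so exact values of $c_2,c_3$ alone do not suffice.

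The paper avoids all of this by using the recurrence $\widehat H_q=\ell_q+\frac1{2\pi}\sum_{r=1}^{q-1}\frac{(r-1)!(q-r)!}{q!}\,\ell_{q-r}\widehat H_r$ from the exponential recursion. Because it keeps the full $\widehat H_r$, cancellations like the one above are automatic. All higher products are absorbed into one uniform bound $\|\widehat H_r\|_{1,\le14}<600$, computer-verified for $r\le99$ and extended by induction. The paper also uses:
\begin{itemize}
\item $\|\ell_q\|_{1,\le14}<800$, checked explicitly for $q\le50$, and $<535$ for $q\ge50$;
\item the tail bound $\|F_q\|_{1,\le14}\le10^6\,2^{-q}$;
\item $\sum_{r=3}^{q-2}\frac{(r-1)!(q-r)!}{q!}\le\frac{5}{q(q-1)(q-2)}$.
\end{itemize}
The $r=1$ term is exactly the stated center with $\pi/q$. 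The $r=2$, middle, and $r=q-1$ terms contribute about $8800$, $190$ and $3770$ at $q=1998$, all decreasing in $q$. To salvage your route you would have to regroup the compositions in essentially this way.
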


The proof is deferred to Subsection \ref{subsubsec:H-proof} below.
Substituting Lemmas \ref{lem:P-estimate} and \ref{lem:H-estimate} into \eqref{eq:M-leading}, with $\mathbf V_{m,j}$ as in \eqref{eq:V-recurrence}, gives the interval expression
{\small
\begin{equation}\label{eq:Mcal}
\begin{aligned}
 &\Mcal_{\Gamma,\rho}(x,s,L):=
 \Trunc{\Gamma}\!\bigl(
 e^{Lw}\mathbf V_{0,0}(x,s,w)\mathbf H_{\rho,0}(x,w)
 \bigr)\\
 &\quad\quad\quad\quad\quad\quad +2\pi xs\frac{1-3x}{(1-2x)(1-3xs)}
 \Trunc{\Gamma}\!\bigl(
 R_{1,0}(w)e^{Lw}\mathbf V_{1,0}(x,s,w)
 \mathbf H_{\rho,1}(x,w)
 \bigr)\\
 &\quad\quad\quad\quad\quad\quad +2\pi xs\frac{(1-3x)(1-s)}
 {(1-2x)(1-3xs)(1-4xs)}
 \Trunc{\Gamma}\!\bigl(
 R_{1,1}(w)e^{Lw}\mathbf V_{1,1}(x,s,w)
 \mathbf H_{\rho,1}(x,w)
 \bigr).
\end{aligned}
\end{equation}}
For every actual pair $g\ge2000$, $n\ge1$, the natural coefficientwise interval evaluation of \eqref{eq:Mcal} contains $M_\Gamma(g,n)$.
Note that for $n=1$ in the last term one has $P_{g-1,0}=1$, and the interval in Lemma \ref{lem:P-estimate} collapses to a point.
The same expression also includes $n=0$, represented by the endpoint $(s,L)=(1,0)$. This case should be argued separately, since Lemma \ref{lem:P-estimate} requires $n\geq j$. The first two summands of \eqref{eq:Mcal} are still covered by Lemma \ref{lem:P-estimate} with $n=j=0$.  The third summand is absent from \eqref{eq:M-leading}, while its prefactor in \eqref{eq:Mcal} contains $1-s$ and is therefore exactly zero at this endpoint. Thus all $n\ge0$ are covered by the one compact interval formula  \eqref{eq:Mcal}.

\begin{lem}\label{lem:L-degree}
For fixed $x,s$, the expression $\Mcal_{\Gamma,\rho}(x,s,L)$ is a polynomial in $L$ of degree at most $\Gamma-1$.
\end{lem}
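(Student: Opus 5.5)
The plan is to track the dependence on $L$ through each factor of \eqref{eq:Mcal}. By construction, $L$ enters only through the factor $e^{Lw}$. The interval polynomials $\mathbf V_{m,j}(x,s,w)$ and $\mathbf H_{\rho,m}(x,w)$ do not involve $L$. The same holds for $R_{1,0}$, $R_{1,1}$ and the scalar prefactors $2\pi xs\frac{1-3x}{(1-2x)(1-3xs)}$ and $2\pi xs\frac{(1-3x)(1-s)}{(1-2x)(1-3xs)(1-4xs)}$. Here $s$ is treated as an independent variable, and the relation $s=e^{-L}$ is deliberately forgotten. So I will show that each of the three truncated summands is a polynomial in $L$ of degree at most $\Gamma-1$, with coefficients that are (interval) functions of $x,s$. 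The statement is then read for this formal expression.

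First, expand $e^{Lw}=\sum_{k\ge0}\frac{L^k}{k!}w^k$. Truncating below degree $\Gamma+1$ in $w$ shows that $L^k$ can only appear multiplied by $w^k$ with $k\le\Gamma$. For a summand of the form $\Trunc{\Gamma}(e^{Lw}G(w))$ with $G$ independent of $L$, the coefficient of $L^k$ is $\frac1{k!}\sum_{i=0}^{\Gamma-k}[w^i]G$. This is already a polynomial in $L$ of degree at most $\Gamma$.

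The key step is to show that the coefficient of $L^\Gamma$ vanishes. This coefficient is $\frac1{\Gamma!}[w^0]G$, summed over the three summands. So it suffices that every $G$ has zero constant term. For the first summand, $G=\mathbf V_{0,0}\mathbf H_{\rho,0}$. Here $\mathbf v_{0,0,0}=[1,1]$, while the constant term of $\mathbf H_{\rho,0}$ is zero: $\mathscr L_\sigma$ has only terms $w^k$ with $k\ge1$, the correction term carries the factor $(w-w^2)$, and the error intervals are attached to $w^k$ with $k\ge1$. So $[w^0]G$ is the exact interval $[0,0]$. For the other two summands, $R_{1,0}$ has constant term $-1$, but $\mathbf H_{\rho,1}$ again has zero constant term. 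Also $R_{1,1}$ itself has no constant term. Hence $[w^0]G=[0,0]$ in all cases.

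This reflects the true identity $H_h(0)=0$ for $h\ge1$, so the $L^\Gamma$ coefficient is exactly zero, not merely an interval containing zero. The main point to check carefully is that, under natural interval evaluation, products with the exact zero interval $[0,0]$ give $[0,0]$. They do, since $[0,0]\cdot I=[0,0]$. Therefore no spurious $L^\Gamma$ term arises, and $\Mcal_{\Gamma,\rho}$ has degree at most $\Gamma-1$ in $L$.
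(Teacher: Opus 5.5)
Your proposal is correct and is the same argument as the paper's: $L$ enters only through $e^{Lw}$, and every $\mathbf H_{\rho,m}$ (hence every $G$) is divisible by $w$, so $\Trunc{\Gamma}$ only retains powers $L^k$ with $k\le\Gamma-1$. Your explicit formula for the $L^k$ coefficient and your remark that $[0,0]\cdot I=[0,0]$ under interval evaluation simply spell out the paper's one-line proof in more detail.
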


\begin{proof}
The only $L$-dependence is in $e^{Lw}$ and the factors $\mathbf H_{\rho,m}$ ($m=0,1$) are divisible by $w$. Since we apply the operator $\Trunc{\Gamma}$ we see that $\Mcal_{\Gamma,\rho}(x,s,L)$ is a polynomial in $L$ of degree at most $\Gamma-1$.
\end{proof}

Although $\rho=g\bmod4$ occurs in \eqref{eq:Mcal}, only parity matters for the absolute-value estimate.  Indeed,
\[
 \mathscr L_{\sigma+2}=-\mathscr L_\sigma,
\]
and the error intervals in \eqref{eq:H-box} are symmetric about zero.  Hence, as interval sets,
\begin{equation}\label{eq:rho-symmetry}
 \mathbf H_{\rho+2,m}=-\mathbf H_{\rho,m},
 \qquad
 \Mcal_{\Gamma,\rho+2}=-\Mcal_{\Gamma,\rho}.
\end{equation}
It is therefore enough to consider one even and one odd residue class.

We now evaluate \eqref{eq:Mcal}.  The only unbounded variable is $L$, so we divide the proof into
\[
 \text{Case A: }0\le L\le32,
 \qquad
 \text{Case B: }L\ge32.
\]

\subsubsection{Evaluation in Case A: $0\le L\le32$}
\label{subsubsec:case-A}

On this range we directly evaluate the full expression \eqref{eq:Mcal} using the interval checker.  Mathematically, one could use the single box
\[
 0\le x\le\frac1{2000},
 \qquad
 0\le s\le1,
 \qquad
 0\le L\le32.
\]
But the interval evaluation on such a large box is too coarse, so the program subdivides the $x$- and $L$-intervals.  It uses the 256 equal intervals
\[
 \left[\frac r{512000},\frac{r+1}{512000}\right],
 \qquad 0\le r<256,
\]
for $x$, and subdivisions of $[0,32]$ into $4208$ intervals in the even case and $4192$ intervals in the odd case.  For an $L$-interval $J=[L^-,L^+]$, it encloses the actual value $s=e^{-L}$ by a rational interval containing
\[
 [e^{-L^+},e^{-L^-}].
\]
It then evaluates $\Mcal_{10,\rho}$ and $\Mcal_{14,\rho}$ on the resulting product boxes.  Treating $s$ and $L$ as independent inside these containing boxes can at worst only enlarge the interval and is therefore rigorous.  The endpoint $(s,L)=(1,0)$ is included in the same evaluation.

On every box, and for one even and one odd value of $\rho$, the checker finds $\Gamma\in\{10,14\}$ and a sign $\varepsilon\in\{-1,1\}$ such that
\begin{equation}\label{eq:case-A-check}
 \varepsilon\,\Mcal_{\Gamma,\rho}(x,s,L)>\frac15
\end{equation}
throughout the box.  The symmetry \eqref{eq:rho-symmetry} covers the other two residue classes.

\subsubsection{Evaluation in Case B: $L\ge32$}
\label{subsubsec:case-B}

In this range the choice $\Gamma=10$ already suffices.  Put
\[
 \vartheta:=\frac1L.
\]
Then
\[
 0<\vartheta\le\frac1{32},
 \qquad
 0<s=e^{-L}\le e^{-32}.
\]
We include $\vartheta=0$ only to compactify the interval boxes.  By Lemma \ref{lem:L-degree}, $\Mcal_{10,\rho}$ has degree at most $9$ in $L$. 
For odd $g$, the interval checker evaluates
\[
 \vartheta^9\Mcal_{10,\rho}(x,s,1/\vartheta)
\]
on the compact box
\[
 0\le x\le\frac1{2000},
 \qquad
 0\le s\le e^{-32},
 \qquad
 0\le\vartheta\le\frac1{32},
\]
and verifies, for a fixed sign $\varepsilon\in\{-1,1\}$,
\begin{equation}\label{eq:odd-case-B-check}
 \varepsilon\,\vartheta^9
 \Mcal_{10,\rho}(x,s,1/\vartheta)
 -\frac15\vartheta^9>0.
\end{equation}
At every point $\vartheta>0$, division by $\vartheta^9$ gives $|M_{10}(g,n)|>1/5$.

For even $g$, the coefficient of $L^9$ is exponentially suppressed.  A term of degree $9$ in $L$ can only come from the coefficient of $w$ in the factor multiplying $e^{Lw}$.  In the first summand of \eqref{eq:Mcal}, that coefficient vanishes by \eqref{eq:H-linear-parity}, because $g-1$ is odd.  The third summand contributes nothing because $R_{1,1}(0)=0$.  Hence
\[
 [L^9]\Mcal_{10,\rho}(x,s,L)
 =sJ_\rho(x,s),
\]
where
\[
 J_\rho(x,s)
 :=-\frac{2\pi x(1-3x)}{9!(1-2x)(1-3xs)}
 [w]\mathbf H_{\rho,1}(x,w).
\]
Write
\[
 \Mcal_{10,\rho}(x,s,L)
 =A_\rho(x,s,L)+sJ_\rho(x,s)L^9,
 \qquad
 \deg_L A_\rho\le8.
\]

Multiply by $\vartheta^8$ and use again $L=1/\vartheta$ to obtain 
\begin{equation}\label{eq:even-reversal}
 \vartheta^8\Mcal_{10,\rho}(x,s,1/\vartheta)
 =\vartheta^8A_\rho(x,s,1/\vartheta)
 +\kappa J_\rho(x,s),
\end{equation}
with
\[
 \kappa:=Ls.
\]
Note that $\kappa$ is a bounded quantity as $Ls=Le^{-L}$:
Since $Le^{-L}$ decreases for $L\ge1$,
\[
 0\le\kappa\le32e^{-32}.
\]
The right-hand side of \eqref{eq:even-reversal} is regular at $\vartheta=0$.  The checker verifies on the compact box in $x,s,\vartheta,\kappa$ that, for a fixed sign $\varepsilon\in\{-1,1\}$,
\begin{equation}\label{eq:even-case-B-check}
 \varepsilon\left(
 \vartheta^8A_\rho(x,s,1/\vartheta)
 +\kappa J_\rho(x,s)\right)
 -\frac15\vartheta^8>0.
\end{equation}
Again, division by $\vartheta^8$ at every actual point gives $|M_{10}(g,n)|>1/5$.

Consequently,
\begin{equation}\label{eq:case-B-conclusion}
 L\ge32\quad\Longrightarrow\quad |M_{10}(g,n)|>\frac15.
\end{equation}

\begin{inputbox}
\begin{computerverification}
[\texttt{interval\_check\_m.py}]
\label{input:leading-evaluation}
The interval checker performs interval evaluations of the explicit expression \eqref{eq:Mcal} as described above.  It verifies:
\begin{enumerate}[label=(\roman*)]
 \item on $0\le L\le32$, including the endpoint $n=0$, all $2{,}150{,}400$ product boxes described above satisfy \eqref{eq:case-A-check} for one of $\Gamma=10,14$;
 \item on $L\ge32$, the choice $\Gamma=10$ alone suffices: the odd compact box satisfies \eqref{eq:odd-case-B-check}, and the even compact box satisfies \eqref{eq:even-case-B-check}.
\end{enumerate}
The input expressions may contain $\pi$, logarithms, and exponentials.  The checker first replaces every such standard quantity by a certified rational interval, obtained from a finite Taylor expansion with an explicit remainder, and then performs the arithmetic outward.  %
\end{computerverification}
\end{inputbox}

\begin{proof}[Proof of Proposition \ref{prop:M lower}]
Case A follows from \eqref{eq:case-A-check}, which is verified in Computer Verification~\ref{input:leading-evaluation}(i).  In Case B, \eqref{eq:odd-case-B-check} and \eqref{eq:even-case-B-check}, verified in Computer Verification~\ref{input:leading-evaluation}(ii), prove \eqref{eq:case-B-conclusion}; the actual value of $\vartheta$ is positive, so the displayed powers of $\vartheta$ may be divided out.  By \eqref{eq:rho-symmetry}, only two values of $\rho$ need to be checked.
\end{proof}

\begin{rem}[Critical slopes]\label{rem:critical-slopes}
The following heuristic explains why the proof of Proposition~\ref{prop:M lower} works, and why the cases $\Gamma=10$ and $\Gamma=14$ must be treated together. Fix the ratio $n/g$ (the \emph{slope}) and let $g\to\infty$. In the coordinates \eqref{eq:x-s-L}, $x\to0$, while $L=\log(1+n/g)$ and $s=e^{-L}$ remain constant, depending only on the slope. In \eqref{eq:Mcal}, only the first term survives at $x=0$, and since $s$ occurs only in products with $x$, the dependence on $s$ disappears as well. What remains is
\[
\Trunc{\Gamma}\!\bigl(
 e^{Lw}\mathscr L_{\rho-1}(w)
 \bigr).
\]
This is a polynomial in $L$ whose coefficients depend on $\rho=g\bmod 4$. Its zeros determine the \emph{critical slopes}, at which the leading term vanishes in the limit. Figure~\ref{fig:all-critical-slopes} displays numerical approximations to all its nonnegative real zeros for $\Gamma=10,14$ and each parity of $g$. A marker at $L$ corresponds to the critical ray $n=(e^L-1)g$. Along a critical slope the nonvanishing of $c_{g,n}^{(\Gamma)}$ can only come from sub-leading contributions, which are of order $x=1/g$, and the leading-term argument gives no information. Away from the critical slopes, the leading term alone establishes nonvanishing. The point is that, for each parity of $g$, the polynomials for $\Gamma=10$ and $\Gamma=14$ have no common zero, so their critical slopes are disjoint. Consequently, for every slope at least one of the two leading terms is bounded away from zero as $x\to0$, and the nonvanishing of the corresponding coefficient, $a_{g,n}$ or $b_{g,n}$, follows without any analysis of sub-leading contributions.
\end{rem}

\begin{figure}[tbp]
\centering
\begin{tikzpicture}[font=\small,x=.47cm,y=.85cm]
  \node[anchor=west,font=\small\scshape] at (0,4.7) {Even genus};
  \node[anchor=west,font=\small\scshape] at (0,1.5) {Odd genus};
  \foreach \yy/\lab in {4.0/{\chi_{11}},3.1/{\chi_{15,0}},.8/{\chi_{11}},-.1/{\chi_{15,0}}} {
    \draw[black!25] (0,\yy)--(27,\yy);
    \node[anchor=east] at (-.5,\yy) {$\lab$};
  }
  \foreach \xx in {.288994091,2.209960145,4.623784650,9.154923928} {
    \fill[weightblue] (\xx,4.0) circle[radius=2pt];
  }
  \foreach \xx in {.160280672,1.516971468,3.026172695,4.896889655,7.548952726,13.429795496} {
    \node[regular polygon,regular polygon sides=3,inner sep=1.4pt,fill=hodgeorange] at (\xx,3.1) {};
  }
  \foreach \xx in {1.220081119,3.316040419,6.310820710,17.694402641} {
    \fill[weightblue] (\xx,.8) circle[radius=2pt];
  }
  \foreach \xx in {.828836092,2.242241046,3.897311060,6.086129178,9.563971622,25.792168724} {
    \node[regular polygon,regular polygon sides=3,inner sep=1.4pt,fill=hodgeorange] at (\xx,-.1) {};
  }
  \draw (0,-.65)--(27,-.65);
  \foreach \xx in {0,3,...,27} {
    \draw (\xx,-.65)--(\xx,-.76) node[below] {$\xx$};
  }
  \node at (13.5,-1.75) {$L=\log(1+n/g)$};
\end{tikzpicture}
\caption{The disjoint sets of critical slopes for each parity of $g$.}
\label{fig:all-critical-slopes}
\end{figure}


\FloatBarrier
\section{Proofs of the auxiliary estimates}
\label{subsec:auxiliary-estimates}

It remains to prove Lemmas \ref{lem:P-estimate} and \ref{lem:H-estimate} and the estimates \eqref{eq:auxiliary-summary}.

\subsection{Proof of Lemma \ref{lem:P-estimate}}
\label{subsubsec:P-proof}
Let $g_0=g-m$, $n_0=n-j$, and $a=g_0-2$.  From
\[
 P_{g_0,n_0}(w)
 =\prod_{v=0}^{n_0-1}\left(1+\frac{w}{a+v}\right)
\]
we obtain, modulo $w^{15}$,
\begin{equation}\label{eq:log-P}
 \log P_{g_0,n_0}(w)
 =\sum_{q=1}^{14}\frac{(-1)^{q+1}}q
 \left(\sum_{v=0}^{n_0-1}(a+v)^{-q}\right)w^q.
\end{equation}
For a positive decreasing function $f$,
\begin{align*}
 0&\le\sum_{v=0}^{n_0-1}f(a+v)-\int_a^{a+n_0}f(t)\,dt
 =
 \sum_{v=0}^{n_0-1}
\left(
f(a+v)
-
\int_{a+v}^{a+v+1}f(t)\,dt
 \right)
 \\&\le 
  \sum_{v=0}^{n_0-1}
\left(
f(a+v)
-
f(a+v+1)
 \right)
 = f(a)-f(a+n_0).
\end{align*}
Applying this to $f(t)=t^{-1}$ yields
\begin{align}
 \log\frac{a+n_0}{a}
 &\le\sum_{v=0}^{n_0-1}\frac1{a+v}
 \le\log\frac{a+n_0}{a}+\frac1a-\frac1{a+n_0},\label{eq:harmonic-integral-1}
\end{align}
and to $f(t)=t^{-q}$ with $q\geq 2$ yields 
\begin{align}
 \frac{a^{1-q}-(a+n_0)^{1-q}}{q-1}
 &\le\sum_{v=0}^{n_0-1}(a+v)^{-q}\notag\\
 &\le\frac{a^{1-q}-(a+n_0)^{1-q}}{q-1}
 +a^{-q}-(a+n_0)^{-q}
 \quad(2\le q\le14).\label{eq:harmonic-integral-q}
\end{align}
These inequalities are exact at $n_0=0$.

For the three pairs $(m,j)$ under consideration,
\[
 \frac1a=\frac{x}{1-(m+2)x},
 \qquad
 \frac1{a+n_0}=\frac{xs}{1-(m+j+2)xs},
\]
and
\[
 \log\frac{a+n_0}{a}
 =L+\log(1-(m+j+2)xs)-\log(1-(m+2)x).
\]
Subtracting $Lw$ from \eqref{eq:log-P} and using \eqref{eq:harmonic-integral-1}--\eqref{eq:harmonic-integral-q} gives precisely the intervals \eqref{eq:beta1}--\eqref{eq:betaq}. 
The following lemma is elementary and well-known. 
\begin{lem}\label{lem:exp recursion}
Let $\sum_{q\geq 1} b_q w^q$ be a formal power series and denote by $a_k$ the Taylor coefficients of its exponential,
\[
\exp\left( \sum_{q\geq 1} b_q w^q \right)
=
\sum_{k=0}^\infty a_k w^k.
\]
Then we have $a_0=1$ and for each $k\geq 1$ the recursion relation 
\[
k a_k = \sum_{q=1}^k q b_q a_{k-q}.
\]
\end{lem}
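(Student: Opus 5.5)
The identity follows from logarithmic differentiation. Write $B(w)=\sum_{q\ge1}b_qw^q$ and $A(w)=\exp(B(w))=\sum_{k\ge0}a_kw^k$, regarded as formal power series in $w$ over a $\QQ$-algebra (here $\R$). Setting $w=0$ gives $a_0=\exp(0)=1$, since $B$ has no constant term; in the formal setting this is just the statement that the constant term of $\exp(B)$ is $1$. The main point is then the formal identity
\[
 A'(w)=B'(w)A(w),
\]
where $'$ is formal differentiation with respect to $w$.

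I will justify this identity formally rather than analytically. Define $\exp(B)=\sum_{r\ge0}B^r/r!$, which converges $w$-adically because $B$ has zero constant term. Formal differentiation is a derivation and is continuous in the $w$-adic topology, so $(B^r)'=rB^{r-1}B'$. Differentiating termwise then gives
\[
 A'=\sum_{r\ge1}\frac{B^{r-1}}{(r-1)!}B'=B'A.
\]
This is the only step with any content, and it is routine.

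Finally, compare coefficients of $w^{k-1}$ on both sides. On the left, $[w^{k-1}]A'=ka_k$. On the right, $B'=\sum_{q\ge1}qb_qw^{q-1}$, so the Cauchy product gives $[w^{k-1}](B'A)=\sum_{q=1}^{k}qb_qa_{k-q}$. Equating these yields $ka_k=\sum_{q=1}^kqb_qa_{k-q}$ for each $k\ge1$, which is the recursion in Lemma~\ref{lem:exp recursion}.

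The identity uses only additions and multiplications. Its truncated version modulo $w^{15}$, with coefficients replaced by intervals, is therefore exactly the recursion \eqref{eq:V-recurrence} used for $\mathbf v_{m,j,k}$. I expect no real obstacle here.
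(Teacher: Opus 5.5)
Your proof is correct and follows the paper's argument: derive $A'=B'A$ by differentiating $\exp(B)$, then compare coefficients of $w^{k-1}$, with $a_0=1$ obtained by setting $w=0$. The only addition is your formal justification of $A'=B'A$ via termwise differentiation of $\sum_r B^r/r!$, which the paper leaves implicit.
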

\begin{proof}
Evaluation at $w=0$ yields $a_0=1$.
Differentiation and substitution of the defining equation gives
    \[
\sum_{k\geq 1} k a_k w^{k-1}
=
 \left(\sum_{q\geq 1}q b_q w^{q-1}\right)\exp\left(\sum_{q\geq 1}b_q w^q\right)
 =
 \left(\sum_{q\geq 1} q b_q w^{q-1}\right) \left( \sum_{k\geq 0} a_k w^k\right).
\]
Comparing the coefficients of $w^{k-1}$ on either side
yields the recursion relation.
\end{proof}

We apply the lemma to
\[
 \exp\left(\sum_{q=1}^{14}\beta_qw^q\right)
 =\sum_{k=0}^{14}v_kw^k\pmod{w^{15}},
\]
and obtain the recursion (with $v_0=1$)
\[
 kv_k=\sum_{q=1}^kq\beta_qv_{k-q}.
\]
The interval recurrence \eqref{eq:V-recurrence} therefore encloses every coefficient, which proves \eqref{eq:P-estimate} and hence Lemma \ref{lem:P-estimate}.\hfill\qed

\subsection{Recurrence and Fourier estimates}
We establish estimates for the proof of Lemma \ref{lem:H-estimate} and the uniform norm bound in \eqref{eq:auxiliary-summary}.
Let
\begin{equation}\label{eq:dq-ellq}
 d_q(w):=-\frac{B_{q+1}(1-w)-B_{q+1}}{q(q+1)}.
\end{equation}
We have
$\Hcal(\tau,w)=\exp(\sum_{q\ge1}d_q(w)\tau^q)=\sum_{h\geq 0} H_h(w) \tau^h$.
Applying Lemma \ref{lem:exp recursion} yields the recurrence
\[
q H_q = \sum_{r=0}^{q-1} (q-r) d_{q-r} H_r.
\]
Using the rescaled version $\widehat H_q=H_q / \Sfac_{q+1}$ this becomes, with $\ell_q := \frac{d_q(w)}{\Sfac_{q+1}}$ and $\Sfac_{q+1}=(q-1)!/(2\pi)^{q+1}$,
\begin{equation}\label{eq:H-recurrence}
 \widehat H_q
 =\ell_q+\frac1{2\pi}\sum_{r=1}^{q-1} c_{q,r}
 \ell_{q-r}\widehat H_r \quad\text{ with } c_{q,r}:=\frac{(r-1)!}{\fall{q}{r}} = \frac{(r-1)!(q-r)!}{q!}.
\end{equation}
The well-known Fourier expansion of the Bernoulli polynomials \cite[Equation (24.8.3)]{NIST:DLMF} (valid for $0<w<1$) yields
\[
 \ell_q(w)
 =i^{-(q+1)}\sum_{a\ne0}
 \frac{e^{-2\pi iaw}-1}{a^{q+1}}.
\]
The two modes $a=\pm1$, truncated after $w^{14}$, are exactly
$\mathscr L_{q\bmod4}(w)$ from \eqref{eq:Lsigma}. In the following we will just write $\mathscr L_{q}(w)$, and read the subscript $q$ modulo 4 in this expression. 
In the recurrence \eqref{eq:H-recurrence}, the term with $r=1$ equals
\[
 \frac1{2\pi q}\ell_{q-1}\widehat H_1,
 \qquad
 \widehat H_1=2\pi^2(w-w^2).
\]
Replacing $\ell_{q-1}$ by its two leading Fourier modes gives the first correction
\[
 \frac{\pi}{q}(w-w^2)\mathscr L_{q-1}(w).
\]
Thus the center of the interval in \eqref{eq:H-box} is the leading two Fourier modes together with the first term of the recurrence.

We will denote the error made by dropping the higher terms in the Fourier expansion by 
\[
F_q(w) := \ell_q(w) - \mathscr L_q(w)
.
\]
We estimate its norm for $q\geq 16$:
\[
\| F_q \|_{1,\leq 14}
\leq 
2\sum_{a=2}^{\infty}
\sum_{j=1}^{14} \frac{(2\pi a)^j}{j! a^{q+1}}
=
2
\sum_{j=1}^{14}
\frac{(2\pi)^j}{j!}
(\zeta(q+1-j)-1).
\]
We bound the Riemann $\zeta$-function (for $s\geq 3$) by 
\[
\zeta(s) -1 =2^{-s} + \sum_{a\geq 3} a^{-s}
\leq 2^{-s} + \int_{2}^\infty x^{-s} dx
=
2^{-s}+
\frac{2^{1-s}}{s-1} \leq 2^{1-s}.
\]
This yields
\begin{equation}\label{equ:F estimate}
\| F_q \|_{1,\leq 14}
\leq 
2
\sum_{j=1}^{14}
\frac{(2\pi)^j}{j!} 2^{j-q}
=
2^{-q} \cdot 2\sum_{j=1}^{14} \frac{(4\pi)^j}{j!}
< 10^6 \cdot 2^{-q}.
\end{equation}
Furthermore, $\mathscr L_q(w)$ depends on $q$ only modulo 4 and $\mathscr L_q(w)=-\mathscr L_{q+2}(w)$.
Hence the explicit evaluation \eqref{input:L-small} for $q=0,1$ in Computer Verification~\ref{input:H-estimate} shows that
\[
    \| \mathscr L_q \|_{1,\leq 14}
    < 534 \quad \text{for all $q$}.
\]
By our estimate for $F_q$ above we hence find for all $q\geq 50$
\begin{equation}\label{eq:lq large}
\| \ell_q \|_{1,\leq 14}
=
\| \mathscr L_q + F_q \|_{1,\leq 14}
\leq 
\| \mathscr L_q \|_{1,\leq 14}
+\|  F_q \|_{1,\leq 14}
< 534 + 10^6 \cdot 2^{-q} < 535.
\end{equation}
By explicit evaluation \eqref{input:lq small} for $q\leq 50$ we then conclude 
\begin{equation}\label{eq:lq small}
\| \ell_q \|_{1,\leq 14} <800 \quad \text{for all $q\geq 1$}.
\end{equation}

Next let us estimate the factors $c_{q,r}$ appearing in \eqref{eq:H-recurrence}. Note that $c_{q,r}=c_{q,q-r+1}$. Furthermore, 
\[
\frac{c_{q,r+1}}{c_{q,r}} = \frac{1}{\frac{q}{r}-1},
\]
so that one sees that $c_{q,r}$ decreases as $r=1,2,\dots$ until $r$ passes $q/2$ and then increases again. We may hence estimate the following sum by its (equal) endpoints
\[
\sum_{r=5}^{q-4}c_{q,r} \leq (q-8) c_{q,5} = (q-8) \frac{24}{\fall{q}{5}}.
\]
This yields the following estimate for the larger sum (valid for $q\geq 100$ so that we can absorb all terms into the first)
\begin{equation}\label{eq:sum cqr}
\begin{aligned}
\sum_{r=2}^{q-1}c_{q,r}  &\leq 
 \frac{2}{\fall{q}{2}}
 +  \frac{4}{\fall{q}{3}}
 +  \frac{12}{\fall{q}{4}}
+ (q-8) \frac{24}{\fall{q}{5}}
\\&=
\frac{1}{q (q-1)}
\left( 2 + \frac{4}{q-2} +\frac{12}{(q-2)(q-3)}
+ \frac{24(q-8)}{(q-2)(q-3)(q-4)} \right)
\\&\leq \frac{3}{q (q-1)},
\end{aligned}
\end{equation}
Similarly, we also record for later use the inequality 
\begin{equation}\label{eq:sum cqr 3}
\begin{aligned}
\sum_{r=3}^{q-2}c_{q,r}  &\leq 
\frac{4}{\fall{q}{3}}
 +  \frac{12}{\fall{q}{4}}
+ (q-8) \frac{24}{\fall{q}{5}}
\\&=
\frac{1}{q (q-1)(q-2)}
\left( 4 +\frac{12}{(q-3)}
+ \frac{24(q-8)}{(q-3)(q-4)} \right)
\\&\leq \frac{5}{q (q-1)(q-2)}.
\end{aligned}
\end{equation}

Let us also list the following explicit computations, used in this subsection and the following two.
\begin{inputbox}
\begin{computerverification}[\texttt{llh\_bounds.py}]
\label{input:H-estimate}
\begin{equation}\label{input:L-small}
    \| \mathscr L_q \|_{1,\leq 14}
    < 534 \quad \text{for $q=0,1$}.
\end{equation}
\begin{equation}\label{input:lq small}
\| \ell_q \|_{1,\leq 14} <800 \quad \text{for $q=1,\dots, 50$}.
\end{equation}
\begin{align}
 \|\widehat H_q\|_{1,\le14}&<600 \quad \text{for all $q=1,\dots,99$}.
 \label{input:H-norm-bound}
 \end{align}
\end{computerverification}
\end{inputbox}
\subsection{A uniform norm bound for $\widehat H_q$}
Let us now turn to showing the first part of \eqref{eq:auxiliary-summary}, i.e., that $\|\widehat H_q\|_{1,\le14}<600$ for all $q\ge1$. We do this by induction. The base cases $q=1,\dots,99$ are the bound \eqref{input:H-norm-bound} of Computer Verification~\ref{input:H-estimate}.
Then assume inductively that $\|\widehat H_r\|_{1,\le14}<600$ for all $r\leq q-1$ and $q\geq 100$. We then use \eqref{eq:H-recurrence} to estimate
\begin{align*}
\|\widehat H_q\|_{1,\le14} &\leq 
\|\ell_q\|_{1,\le14}
+
\frac{\pi}{q} \|(w-w^2) \ell_{q-1} \|_{1,\le14}
+\frac1{2\pi}\sum_{r=2}^{q-1} c_{q,r}
\|
 \ell_{q-r}\widehat H_r\|_{1,\le14}
\\
&\le 535 
+
\frac{2\pi}{q} 535
+
\frac{1}{2\pi} 800 \cdot 600
\sum_{r=2}^{q-1} c_{q,r}
\\
&\le 535 
+
\frac{2\pi}{q} 535
+ \frac{400}{\pi} \frac{3}{q(q-1)} 600
\\&\leq
535 + \frac{1070 \pi}{100} + \frac{720000}{9900\pi}
<600.
\end{align*}
Here we use the induction hypothesis, \eqref{eq:lq large}, \eqref{eq:lq small} and \eqref{eq:sum cqr}. This shows the first part of \eqref{eq:auxiliary-summary} as desired.

\subsection{Proof of Lemma \ref{lem:H-estimate}}
\label{subsubsec:H-proof}
Define the error term to be 
\[
\varepsilon_q (w) := \widehat H_q (w)-\mathscr L_q(w) - \frac{\pi}{q}(w-w^2)\mathscr L_{q-1}(w).
\]
Our goal is to check that $\|\varepsilon_q \|_{1,\leq 14} \leq 13000/q^2$ for $q\geq 1998$.
To this end, we use the recursion relation \eqref{eq:H-recurrence} and the definition of the error terms $F_q$ to get 
\[
\varepsilon_q
=
F_q + \frac{\pi}{q}(w-w^2) F_{q-1}
+
\frac{1}{2\pi}
\sum_{r=2}^{q-1} c_{q,r} \ell_{q-r} \widehat H_r
.
\]
We then estimate, treating separately also the terms with $r=2$ and $r=q-1$ in the sum,
\begin{align*}
\|\varepsilon_q\|_{1, \leq 14}
&\leq 
\|F_q\|_{1, \leq 14}
+
\frac{2\pi}{q} \|F_{q-1}\|_{1, \leq 14}
+ 
\frac{1}{2\pi} c_{q,2}
\|\ell_{q-2}\|_{1, \leq 14}
\|\widehat H_{2}\|_{1, \leq 14}
 \\
&\quad +
\frac{1}{2\pi} \sum_{r=3}^{q-2}
c_{q,r} \|\ell_{q-r}\|_{1, \leq 14}\|\widehat H_{r}\|_{1, \leq 14}
+
\frac{1}{2\pi}c_{q,q-1} \|\ell_{1}\|_{1, \leq 14}\|\widehat H_{q-1}\|_{1, \leq 14}.
\end{align*}
We may estimate the individual terms on the right as follows. The first two terms are covered by \eqref{equ:F estimate}, and for the third we may use \eqref{eq:lq large} and the explicit evaluation 
\[
\|\widehat H_{2}\|_{1, \leq 14} = \left\|\pi^3\left(\frac23w -w^2-\frac23 w^3+w^4\right) \right\|_{1,\leq 14} = \frac{10}{3}\pi^3.
\]
For the fourth term (the sum) we use \eqref{eq:sum cqr 3}, and for the last term that $\ell_1(w)=2\pi^2(w-w^2)$.
Altogether, we obtain
\begin{align*}
q^2\|\varepsilon_q\|_{1, \leq 14}
&\leq 
10^6 \cdot 2^{-q}q^2
+
 10^6 \frac{2\pi}{q} \cdot 2^{1-q} q^2
+ 
\frac{1}{2\pi} \frac{q^2}{q (q-1)}
535
\frac{10}{3}\pi^3
 \\
&\quad +
\frac{1}{2\pi} 
\frac{5q^2}{q(q-1)(q-2)}
800 \cdot 600
+
\frac{1}{2\pi}\frac{q^2}{q(q-1)} 4\pi^2
600.
\end{align*}

All terms are monotonically decreasing in $q\geq 1998$, and hence can be bounded from above by their value at $q=1998$.
We hence see that 
\[
q^2 \|\varepsilon_q\|_{1, \leq 14} \leq 13000 \quad \text{for $q\geq 1998$}.
\]
Substituting $q=g-1-m$ so that 
\[
\frac 1q = \frac{x}{1-(m+1)x}
\]
this shows \eqref{eq:H-estimate}.

Finally, since the $d_q$ of \eqref{eq:dq-ellq} have no constant term, the linear terms of $H_q$ and $d_q$ agree, 
\[
[w] H_q = [w] d_q = - \left.\frac{d}{dw}\right|_{w=0} \frac{B_{q+1}(1-w)-B_{q+1}}{q(q+1)} 
=
\frac {B_q(1)}{q}.
\]
Here we used again the derivative rule for the Bernoulli polynomials $B_{q+1}'(x)=(q+1)B_{q}(x)$.
Furthermore, it is well-known that $B_q(1)=0$ for $q>1$ odd, so that \eqref{eq:H-linear-parity}, and hence Lemma \ref{lem:H-estimate} follow. 
\hfill\qed

\subsection{Coefficient bounds for $\Rcal$}
\label{subsubsec:R-proof}

It remains to prove the bound for $r_m$ in
\eqref{eq:auxiliary-summary}.  We work throughout modulo $w^{15}$ and
write
\begin{equation}\label{eq:Lambda-def}
 \log\Rcal(u,z,w)=\sum_{m\ge1}\Lambda_m(z,w)u^m.
\end{equation}
For a polynomial in $z,w$, let $\|\cdot\|_+$ denote the sum of the
absolute values of all its coefficients, and put
\begin{equation}\label{eq:lambda-def}
 \lambda_m:=\|\Lambda_m\|_+.
\end{equation}
For $m\ge0$ set
\[
 \Rcal_m(z,w):=[u^m]\Rcal(u,z,w)
 =\sum_{j=0}^m R_{m,j}(w)z^j,
\]
so that $r_m=\|\Rcal_m\|_+$ by \eqref{eq:norm}.  By Lemma \ref{lem:exp recursion}
we have
\[
 m\Rcal_m=\sum_{\nu=1}^m \nu\Lambda_\nu \Rcal_{m-\nu}.
\]
Taking the norm $\|-\|_+$ on both sides yields the recursion relation
\begin{equation}\label{eq:r-log-recurrence}
 mr_m\le \sum_{\nu=1}^m \nu\lambda_\nu r_{m-\nu}.
\end{equation}

\begin{lem} \label{lem:central ineq}
We have that
\begin{equation}\label{eq:lambda-scalar-target}
 \sum_{m=1}^{\infty}
 \frac{m\lambda_m}{6^m\Gamma(m/2+1)}<1.
\end{equation}
\end{lem}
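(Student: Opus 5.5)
Since $\log\Rcal=(1-w)\log(1-u)+\sum_{\ell\ge2}\bigl(\log U_\ell(X_\ell^{(w)}+\mu(\ell)z/(u\ell),u)-\log U_\ell(\mu(\ell)z/(u\ell),u)\bigr)$ with $X_\ell^{(w)}=\frac1\ell\sum_{d\mid\ell}\mu(\ell/d)(1-w^d)$, the plan is to bound $\lambda_m$ term by term from the expansion \eqref{eq:Uell-expanded}. The aim is an explicit majorant $\lambda_m\le\bar\lambda_m$ that is small for small $m$ and grows at most like $C^m$ times a small factorial-type factor. We then split the series \eqref{eq:lambda-scalar-target} into a finite head $m\le M_0$ (say $M_0\approx 200$) and a tail. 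The head is computed exactly or with certified bounds by computer: compute $\Lambda_m$ modulo $w^{15}$ by the same finite expansion used for $R_{m,j}$, using Lemma~\ref{lem:ell range} to restrict to $\ell\le 2m$. The tail is handled by the analytic majorant.

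For the tail, first bound the $(1-w)\log(1-u)$ part, which contributes $2/m$. For each $\ell\ge2$, write a term $X^kE_\ell^{-s}$ in the difference. Here $X=A+Y$ with $A=X_\ell^{(w)}$, whose coefficient norm is at most $\frac1\ell\sum_{d\mid\ell}2\le 2\tau(\ell)/\ell\le2$, and $Y=\pm z/(u\ell)$. The difference $(A+Y)^k-Y^k$ has $\|\cdot\|_+$ at most $(\|A\|+1/\ell)^k$ up to powers of $u^{-1}$. By \eqref{equ:uellEell}, $E_\ell^{-s}=(\ell u^\ell)^s(1+\cdots)^{-s}$. The inverse series has integer coefficients supported in degrees $u^{\ell-d}$, $d<\ell$, so its coefficient up to $u^N$ grows at most like $\binom{s+N}{N}\cdot 2^{N}$ or so. Combined with the valuation count from the proof of Lemma~\ref{lem:ell range} (a term of $X$-degree $k$ lands in $u$-degree $\ge k-1$ and the $B_r$ terms in degree $\ge \ell r$), the contribution to $\Lambda_m$ from fixed $\ell$ is a finite sum. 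We bound it by $K^m$ times the Bernoulli factor. The Bernoulli numbers enter as $|B_r|/(r(r-1))\le 2\cdot r!/((2\pi)^r r^2)$, but only with $\ell r\le m$, hence $r\le m/2$. This is exactly where the $\Gamma(m/2+1)$ growth comes from, so $\lambda_m\lesssim C^m\Gamma(m/2+1)/m^2$ for some $C$ we must check is comfortably below $6$ (we expect roughly $C\approx 3$–$4$, with the worst case $\ell=2$). Then the tail $\sum_{m>M_0} m\bar\lambda_m/(6^m\Gamma(m/2+1))$ is geometric with ratio $C/6$ and tiny.

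Finally, add the exactly computed head (a Computer Verification in the style of \texttt{llh\_bounds.py}) and the tail bound, and check that the total is below $1$. The main obstacle is the uniform majorant for the $\ell=2$ (and small $\ell$) Bernoulli contributions, $\sum_r\frac{B_r}{r(r-1)}\binom{r+k-2}{k}X^kE_\ell^{-(r+k-1)}$. There the binomial $\binom{r+k-2}{k}$ and the inverse powers of $E_\ell$ interact, and a naive bound could push $C$ past $6$. If this fails, I would first try summing the Bernoulli series in closed form via $B(z)$ as an asymptotic Stirling remainder. Failing that, I would enlarge $M_0$ so that only the cleaner asymptotic regime is bounded analytically.
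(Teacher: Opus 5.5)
\paragraph{There is a genuine gap: the majorant for $\lambda_m$ is asserted, not proved.}
Your outline matches the paper's: exact values for finitely many $m$, an analytic majorant for the rest, and the $\Gamma(m/2+1)$ coming from Bernoulli terms, which reach $u^m$ only when $\ell r\le m$. But the majorant is the entire analytic content of the lemma, and you never produce it. ``We bound it by $K^m$ times the Bernoulli factor'' and ``we expect $C\approx 3$--$4$'' are assertions, and the target is an explicit numerical inequality, so you need explicit constants.

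The bookkeeping you sketch also does not obviously deliver them. Each power of $z/(u\ell)$ costs a factor $u^{-1}$, so the coefficient of $u^m$ draws on deeper coefficients of $E_\ell^{-s}=(\ell u^\ell)^s\Delta_\ell^{-s}$, where $\Delta_\ell=\ell u^\ell E_\ell$. Multiplying worst cases for $\ell^s$, your inverse-series bound, $(\|A\|+1/\ell)^k$ and $\binom{r+k-2}{k}$ already gives growth like $6^m$ to $7^m$ at $\ell=2$, before any sum over $\ell\le 2m$. For the non-Bernoulli terms this is harmless, because the $\Gamma(m/2+1)$ in the denominator absorbs any exponential. But suppose you then bound the Bernoulli factor uniformly by $\Gamma(m/2+1)$, as your target shape $\lambda_m\lesssim C^m\Gamma(m/2+1)$ suggests. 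Then $C\ge 6$, the tail of the majorant diverges, and enlarging $M_0$ cannot repair this.

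Your list of terms also omits the contribution $W_\ell\bigl(\log(1-u^\ell)+\log\Delta_\ell\bigr)$ of $X\log(\lambda_\ell E_\ell)$, where $W_\ell=\frac1\ell\sum_{d\mid\ell}\mu(\ell/d)(1-w^d)$. It is easy to bound, but it must be included.

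\paragraph{The missing idea.}
Carry out every summation on majorant series in $u$, summing over $\ell$ before extracting coefficients. Write $|F|_\#(u):=\sum_m\|[u^m]F\|_+u^m$, with all inequalities coefficientwise. Then:
\begin{itemize}
\item $|\Delta_\ell-1|_\#\le u/(1-u)$, hence $|\Delta_\ell^{-1}|_\#\le(1-u)/(1-2u)$.
\item $\sum_{\ell\ge2}|E_\ell^{-1}|_\#\le A(u):=\frac{u^2(2-u)}{(1-u)(1-2u)}$.
\item With $Z_\ell=\mu(\ell)z/(\ell u)$ and $X_\ell=W_\ell+Z_\ell$, also $\sum_\ell|E_\ell^{-1}Z_\ell|_\#\le\frac{u}{1-2u}$ and $\sum_\ell|E_\ell^{-1}X_\ell|_\#\le Y(u):=\frac{u}{1-2u}+2A(u)$.
\end{itemize}
The factorization $E_\ell^{-k}(X_\ell^k-Z_\ell^k)=E_\ell^{-1}W_\ell\sum_{j=0}^{k-1}(E_\ell^{-1}X_\ell)^{k-1-j}(E_\ell^{-1}Z_\ell)^j$ pairs each $u^{-1}$ with an $E_\ell^{-1}$. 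For nonnegative series, $\sum_\ell a_\ell b_\ell^pc_\ell^q\le(\sum_\ell a_\ell)(\sum_\ell b_\ell)^p(\sum_\ell c_\ell)^q$, so the sums over $\ell$ and then over $k$ close up.
\begin{itemize}
\item The non-Bernoulli part is majorized by an explicit rational function $N(u)$ with $N(1/5)=1405/264$.
\item Using $\sum_{k\ge1}k\binom{r+k-2}{k}y^{k-1}=(r-1)(1-y)^{-r}$, the index-$r$ Bernoulli part is majorized by $\frac{2|B_r|}{r}\bigl(A/(1-Y)\bigr)^r$, with $A/(1-Y)\le 2u^2/(1-5u)$.
\end{itemize}
This keeps $(r-1)!$ attached to its own index and gives
$\lambda_m\le\frac{1405}{264}5^m+\sum_{b=2}^{\lfloor m/2\rfloor}\frac{8\cdot 5^{m-2b}(m-b-1)!}{\pi^b(m-2b)!}$.
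The normalized majorants satisfy $T_{m+2}<T_m/7$ for $m\ge 11$. So exact values for $m\le 10$ suffice, rather than a head up to $m\approx 200$.

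\paragraph{A smaller error.}
Your bound $|B_r|/(r(r-1))\le 2r!/((2\pi)^r r^2)$ is false for every even $r$, since $|B_r|=2\zeta(r)r!/(2\pi)^r$. At $r=2$ it claims $1/12\le 1/(4\pi^2)$. Use instead, for example, $|B_r|\le 4r!/(2\pi)^r$.
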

The proof of the lemma will be given below. For now we shall check that it immediately implies the desired bound for $r_m$ in
\eqref{eq:auxiliary-summary}.

\begin{cor}\label{cor:r-bound}
For every $m\ge0$,
\begin{equation}\label{eq:r-bound}
 r_m\le6^m\Gamma\left(\frac m2+1\right).
\end{equation}
\end{cor}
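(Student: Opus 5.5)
The plan is a strong induction on $m$, combining the recursion \eqref{eq:r-log-recurrence} with Lemma~\ref{lem:central ineq}. Write $G(m):=6^m\Gamma(m/2+1)$.

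The base case is $m=0$. Here $\Rcal_0=R_{0,0}=1$, so $r_0=1=G(0)$.

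For the inductive step, fix $m\ge1$ and assume $r_{m'}\le G(m')$ for all $m'<m$. Inserting this into \eqref{eq:r-log-recurrence} gives
\[
 m r_m\le\sum_{\nu=1}^m \nu\lambda_\nu\, G(m-\nu).
\]
We then want to show that $G(m-\nu)\le G(m)/G(\nu)$ for $1\le\nu\le m$, which is equivalent to
\[
 \Gamma\Bigl(\tfrac{m-\nu}{2}+1\Bigr)\Gamma\Bigl(\tfrac{\nu}{2}+1\Bigr)\le\Gamma\Bigl(\tfrac m2+1\Bigr).
\]
This is a Beta-function inequality. Setting $a=(m-\nu)/2$ and $b=\nu/2$, it reads
\[
 \frac{\Gamma(a+1)\Gamma(b+1)}{\Gamma(a+b+1)}=(a+b+1)\,B(a+1,b+1)\le1.
\]
To prove it, I would use $B(a+1,b+1)=\int_0^1 t^a(1-t)^b\,dt$. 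For $a,b\ge0$ the quantity $(a+b+1)B(a+1,b+1)$ equals $1$ when $a=0$ or $b=0$. It is decreasing in each variable, since it equals $\binom{a+b}{a}^{-1}$ in the generalized sense. Alternatively, one can use log-convexity of $\Gamma$: the function $\phi(t)=\log\Gamma(t+1)$ is convex with $\phi(0)=0$, hence superadditive on $[0,\infty)$, which gives $\phi(a)+\phi(b)\le\phi(a+b)$. The superadditivity route is the cleanest, so I will use it.

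With this bound in hand,
\[
 m r_m\le G(m)\sum_{\nu=1}^m\frac{\nu\lambda_\nu}{G(\nu)}<G(m)
\]
by Lemma~\ref{lem:central ineq}. Hence $r_m<G(m)/m\le G(m)$, which completes the induction.

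I expect no real obstacle here; the only substantive point is the Gamma superadditivity inequality. That inequality follows from convexity of $\log\Gamma(t+1)$ together with $\log\Gamma(1)=0$. All the analytic difficulty has been moved into Lemma~\ref{lem:central ineq}.
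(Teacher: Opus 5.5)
Your proposal is correct and follows essentially the same route as the paper: strong induction, inserting the hypothesis into the recursion \eqref{eq:r-log-recurrence}, bounding $\Gamma(\nu/2+1)\Gamma((m-\nu)/2+1)\le\Gamma(m/2+1)$, and then applying Lemma~\ref{lem:central ineq}. The only difference is cosmetic. You obtain the Gamma inequality from superadditivity of the convex function $\log\Gamma(t+1)$, which vanishes at $t=0$, whereas the paper uses convexity and symmetry of $t\mapsto\log\Gamma(t+1)+\log\Gamma(m/2-t+1)$ on $[0,m/2]$; both rest on the log-convexity of $\Gamma$.
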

\begin{proof}
We shall use the elementary inequality
\begin{equation}\label{eq:gamma-product}
 \Gamma\left(\frac\nu2+1\right)
 \Gamma\left(\frac{m-\nu}{2}+1\right)
 \le \Gamma\left(\frac m2+1\right)
 \qquad(0\le\nu\le m).
\end{equation}
Indeed, for fixed $m$ the function
\[
 t\longmapsto
 \log\Gamma(t+1)+\log\Gamma\left(\frac m2-t+1\right)
 \qquad\left(0\le t\le\frac m2\right)
\]
is convex and symmetric.  Its maximum is therefore attained at an
endpoint, where the product is $\Gamma(m/2+1)$.

Now the case $m=0$ of the corollary is immediate.  Let $m\ge1$ and suppose that
\eqref{eq:r-bound} is known for all smaller indices.  From
\eqref{eq:r-log-recurrence}, \eqref{eq:gamma-product}, and
\eqref{eq:lambda-scalar-target},
\begin{align*}
 r_m
 &\le\frac1m\sum_{\nu=1}^m
 \nu\lambda_\nu
 6^{m-\nu}\Gamma\left(\frac{m-\nu}{2}+1\right)\\
 &\le\frac{6^m\Gamma(m/2+1)}m
 \sum_{\nu=1}^m
 \frac{\nu\lambda_\nu}
 {6^\nu\Gamma(\nu/2+1)}\\
 &<\frac1m6^m\Gamma\left(\frac m2+1\right)
 \le6^m\Gamma\left(\frac m2+1\right).
\end{align*}
This proves the required bound for $r_m$ in
\eqref{eq:auxiliary-summary}.
\end{proof}

\subsubsection{Proof of Lemma \ref{lem:central ineq}}

It remains to show \eqref{eq:lambda-scalar-target}. This will be done by a sequence of auxiliary results.
\begin{lem}\label{lem:lambda-majorant}
For every $m\ge2$,
\begin{equation}\label{eq:lambda-majorant}
 \lambda_m\le
 \frac{1405}{264}5^m
 +\sum_{b=2}^{\lfloor m/2\rfloor}
 \frac{8\cdot 5^{m-2b}(m-b-1)!}{\pi^b(m-2b)!}.
\end{equation}
\end{lem}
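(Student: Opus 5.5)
We will bound $\Lambda_m=[u^m]\log\Rcal$ directly from the definition \eqref{eq:Rcal-def}. Since $\log\Rcal=(1-w)\log(1-u)+\sum_{\ell\ge2}\Delta_\ell$, where $\Delta_\ell$ denotes the difference of the two $\log U_\ell$ terms, we expand each $\Delta_\ell$ using \eqref{eq:Uell-expanded}. The substitutions are $X_1=\frac1\ell\sum_{d\mid\ell}\mu(\ell/d)(1-w^d)+\frac{\mu(\ell)}\ell z/u$ and $X_0=\frac{\mu(\ell)}\ell z/u$. Write $\alpha:=\frac1\ell\sum_{d\mid\ell}\mu(\ell/d)(1-w^d)$. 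Modulo $w^{15}$ we have $\|\alpha\|_+\le\frac1\ell\sum_{d\mid\ell}|\mu(\ell/d)|\le\frac{\tau(\ell)}{\ell}$, and $\alpha$ vanishes unless some $d\le14$ divides $\ell$. The term $(1-w)\log(1-u)$ contributes at most $2/m$ to $\lambda_m$, which is absorbed into the constant.

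\textbf{Non-Bernoulli part.} The non-Bernoulli terms of \eqref{eq:Uell-expanded} give
\[
X^kE_\ell^{-k}/(2k)\quad\text{and}\quad -X^kE_\ell^{-(k-1)}/(k(k-1)).
\]
In the difference $X_1^k-X_0^k=\sum_{i\ge1}\binom ki\alpha^iX_0^{k-i}$, each monomial carries $z^{k-i}u^{-(k-i)}$. Meanwhile $E_\ell^{-s}=(\ell u^\ell)^s(1+\dots)^{-s}$ by \eqref{equ:uellEell}, so it has $u$-valuation $\ell s$ and coefficients bounded by those of $(1-\sum_{d<\ell,d\mid\ell}u^{\ell-d})^{-s}$ times $\ell^s$.

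Bookkeeping: the coefficient of $u^m$ picks up $\ell s-(k-i)$ from valuations plus extra degree from the expansion of $(\cdot)^{-s}$. The dominant contributions come from $\ell=2$, where $E_2=(u^{-2}-u^{-1})/2$ and $E_2^{-1}=2u^2/(1-u)$. Here the count of ways to distribute $u$-degree produces geometric growth. I expect a crude majorant such as $|\mu|\le1$, $\|\alpha\|_+\le 2$, $\binom ki\le 2^k$, combined with summing a geometric series in $\ell$ (since larger $\ell$ costs $u^{\ell}$ per power of $E_\ell^{-1}$), to yield the bound $C\cdot5^m$. The explicit constant $\frac{1405}{264}$ presumably arises from summing the finitely many leading contributions exactly and then bounding the tail. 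I would organize this as the generating-function majorant $\sum_m \lambda_m t^m\preceq$ an explicit rational function in $t$ with a simple pole at $t=1/5$, and then read off its coefficients.

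\textbf{Bernoulli part.} The Bernoulli terms contain $\frac{|B_r|}{r(r-1)}\binom{r+k-2}{k}X^kE_\ell^{-(r+k-1)}$ with $r=2b$ even. Using $|B_{2b}|\le \frac{4(2b)!}{(2\pi)^{2b}}$, the valuation $\ell(r+k-1)\ge 2(2b+k-1)$ forces $u$-degree at least roughly $2b$ beyond the $z$-power. After summing over $k$ with the binomial $\binom{2b+k-2}{k}$, the count gives factors like $(m-b-1)!/((m-2b)!)$ against the $(2b)!$, leaving $\pi^{-b}$ after cancellation with the $\ell=2$ normalization $2^{s}$ from $E_2^{-s}$. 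This matches the shape $\frac{8\cdot5^{m-2b}(m-b-1)!}{\pi^b(m-2b)!}$: the $5^{m-2b}$ is the same geometric factor as before, applied to the remaining degree $m-2b$.

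\textbf{Main obstacle.} The hardest step is controlling the sum over all $\ell\ge2$ uniformly, in particular showing that the contributions for $\ell\ge3$, with $E_\ell^{-s}$ having more complicated series $(1+\sum\mu u^{\ell-d})^{-s}$, are dominated by the $\ell=2$ geometric rate $5^m$. For this I would first try majorizing $\ell u^\ell E_\ell$ from below coefficientwise by $1-\sum_{j\ge1}u^j=\frac{1-2u}{1-u}$. This makes every $(\ell u^\ell E_\ell)^{-s}$ majorized by $((1-u)/(1-2u))^s$, independently of $\ell$, while the factor $u^{\ell s}$ makes the sum over $\ell$ geometric. If the resulting rate exceeds $5$, I would treat $\ell=2,3$ exactly and apply the crude majorant only for $\ell\ge4$.
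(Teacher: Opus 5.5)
The overall framework you describe is the right one and is essentially the paper's: majorant series in $u$ with nonnegative coefficients, and the $\ell$-uniform bound $(\ell u^\ell E_\ell)^{-1}\preccurlyeq\frac{1-u}{1-2u}$. But the proposal stops exactly where the explicit constants in \eqref{eq:lambda-majorant} have to be produced, so as written it does not prove the lemma. Below, $W_\ell, X_\ell, Z_\ell$ denote your $\alpha, X_1, X_0$.

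\textbf{Non-Bernoulli part.} You omit the linear term $X\log(\lambda_\ell E_\ell)$ of \eqref{eq:Uell-expanded}. After subtraction it leaves $W_\ell\bigl(\log(1-u^\ell)+\log(\ell u^\ell E_\ell)\bigr)$, and the norms of $W_\ell$ are not summable over $\ell$. So this term needs its own argument via the $u$-valuations of the two logarithms, which gives majorants $\frac{2u^2}{(1-u)^2}$ and $\frac{2u}{(1-u)(1-2u)}$. For the remaining terms you only predict ``$C\cdot5^m$''. Your guesses for where $\frac{1405}{264}$ comes from (exact leading terms plus a tail, or a simple pole at $1/5$) are not how it arises. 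Crude steps such as $\binom ki\le2^k$ lose constants, and you give no argument that the result stays below $\frac{1405}{264}$.

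The missing step is the factorization
$E_\ell^{-k}(X_\ell^k-Z_\ell^k)=E_\ell^{-1}W_\ell\sum_{j=0}^{k-1}(E_\ell^{-1}X_\ell)^{k-1-j}(E_\ell^{-1}Z_\ell)^j$,
together with its analogue for $E_\ell^{1-k}$. After it, you bound each $\sum_{\ell\ge2}$ of a product by the product of the $\sum_{\ell\ge2}$'s. Put $A(u)=\frac{u^2(2-u)}{(1-u)(1-2u)}$ and $Y(u)=\frac{u}{1-2u}+2A(u)$. Then $\sum_\ell E_\ell^{-1}W_\ell$ is majorized by $2A$ and $\sum_\ell E_\ell^{-1}X_\ell$ by $Y$, so the $k$-sums collapse to $\frac{A}{1-Y}$ and $\frac{2Y}{1-Y}$. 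The resulting rational majorant $N(u)$ is \emph{regular} at $u=1/5$, since $Y(1/5)=\frac{19}{30}<1$. The bound is then simply $[u^m]N\le5^mN(1/5)$, with $N(1/5)=\frac12+\frac18+\frac56+\frac9{22}+\frac{38}{11}=\frac{1405}{264}$.

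\textbf{Bernoulli part.} You set $r=2b$, which misreads the target. The $b$-th summand of \eqref{eq:lambda-majorant} is the bound for Bernoulli index $r=b$ itself; the same bound is inserted for odd $r$, whose actual contribution is zero. With $r=2b$, your own valuation count would give $5^{m-4b}$ and $(2\pi)^{-2b}$, so the claimed match with the displayed shape is not a derivation.

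The correct computation uses the same factorization. The index-$r$ contribution is majorized by
$\frac{2|B_r|}{r(r-1)}A^r\sum_{k\ge1}k\binom{r+k-2}{k}Y^{k-1}=\frac{2|B_r|}{r}\bigl(\frac{A}{1-Y}\bigr)^r$.
One checks that $\frac{A}{1-Y}=\frac{u^2(2-u)}{1-4u-u^2+2u^3}$ is coefficientwise dominated by $\frac{2u^2}{1-5u}$, using the recurrence $q_n=4q_{n-1}+q_{n-2}-2q_{n-3}$ for its coefficients. With $|B_r|\le4r!/(2\pi)^r$, the coefficient of $u^m$ is at most
$\frac{8(r-1)!}{\pi^r}5^{m-2r}\binom{m-r-1}{r-1}=\frac{8\cdot5^{m-2r}(m-r-1)!}{\pi^r(m-2r)!}$.
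Your intuition is right that a factor $2^r$ from $\ell u^\ell$ at $\ell=2$ cancels against $(2\pi)^{-r}$ to leave $\pi^{-r}$. But it attaches to $r=b$, not $r=2b$.
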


\begin{proof}
For a formal series $F$ in $u,z,w$, let
\[
 |F|_\#(u):=\sum_{m\ge0}\bigl\|[u^m]F\bigr\|_+u^m.
\]
We write $F\preccurlyeq G$ when $|F|_\#$ is coefficientwise bounded
by a series $G(u)$ with nonnegative coefficients.  Truncation modulo
$w^{15}$ can only improve such a bound.

Recall that $E_\ell:= \frac 1 \ell \sum_{d\mid \ell}\mu(\ell/d) u^{-d}$ and write
\[
 W_\ell(w):=\frac1\ell\sum_{d\mid\ell}\mu(\ell/d)(1-w^d)
 \qquad\mbox{and}\qquad
 \Delta_\ell:=\ell u^\ell E_\ell= \sum_{d\mid \ell}\mu(\ell/d) u^{\ell -d}.
\]
Every proper divisor of $\ell$ is at most $\ell/2$, and hence, for
$\ell\ge2$,
\[
 |\Delta_\ell-1|_\#
 \preccurlyeq\frac{u^{\lceil\ell/2\rceil}}{1-u}
 \preccurlyeq \frac{u}{1-u}.
\]
We also get 
\begin{equation}\label{equ:Delta inv}
|\Delta_\ell^{-1}|_\# 
=
|\sum_{k=0}^\infty (1-\Delta_\ell)^k |_\#
\preccurlyeq
\sum_{k=0}^\infty |1-\Delta_\ell|_\#^k
\preccurlyeq
\sum_{k=0}^\infty \left(\frac{u}{1-u}\right)^k
=
\frac{1-u}{1-2u}.
\end{equation}

Define 
\begin{equation}\label{eq:A-Y}
 A(u):=\frac{u^2(2-u)}{(1-u)(1-2u)}
 \qquad\mbox{and}\qquad
 Y(u):=\frac{u}{1-2u}+2A(u).
\end{equation}

Since $E_\ell^{-1}=\ell u^\ell\Delta_\ell^{-1}$, summation over
$\ell\ge2$ gives
\[
\sum_{\ell\ge2}|E_\ell^{-1}|_\#
\preccurlyeq
\frac{1-u}{1-2u}
\sum_{\ell\ge2} 
\ell u^\ell 
=
\frac{1-u}{1-2u}
\frac{u^2(2-u)}{(1-u)^2}
=
A(u).
\]
Since $\|W_\ell\|_+\le2$ we get
\begin{equation}
    \label{eq:E-inverse-majorantsA}
\sum_{\ell\ge2}|E_\ell^{-1}W_\ell|_\# \preccurlyeq2A(u).
\end{equation}
Next, $|\mu(\ell)/\ell|\le1/\ell$ and hence
\begin{equation}
    \label{eq:E-inverse-majorantsZ}
\sum_{\ell\ge2} |E_\ell^{-1} \frac{\mu(\ell)z}{u\ell}|_\#
\preccurlyeq 
\frac{1-u}{1-2u} \sum_{\ell \geq 2} u^{\ell-1} 
=
\frac{u}{1-2u} \preccurlyeq  Y(u),
\end{equation}
and 
\begin{equation}
    \label{eq:E-inverse-majorantsY}
\sum_{\ell\ge2}|E_\ell^{-1}(W_\ell+\frac{\mu(\ell)z}{\ell u})|_\#
 \preccurlyeq Y(u).
\end{equation}

Next, we apply these estimates to the terms of $\log \Rcal$. Concretely, from \eqref{eq:Rcal-def} and \eqref{eq:Uell-expanded} we have
\begin{equation}\label{equ:logR recap}
\begin{aligned}
\log\Rcal &= 
(1-w)\log(1-u) + \sum_{\ell\geq 2} W_\ell(\log(1-u^\ell)+\log \Delta_\ell) 
+\frac12 \sum_{\ell\geq 2} \sum_{k\geq 1} \frac{E_\ell^{-k}(X_\ell^k -Z_\ell^k) }{k}
\\&\quad
- \sum_{\ell\geq 2} \sum_{k\geq 2} \frac{E_\ell^{1-k}(X_\ell^k -Z_\ell^k)}{k(k-1)}
-\sum_{\ell\geq 2} \sum_{\substack{r\ge2\\r\text{ even}}}
 \frac{B_r}{r(r-1)}
 \sum_{k\ge1}\binom{r+k-2}{k}E_\ell^{1-r-k}(X_\ell^k-Z_\ell^k).
\end{aligned}
\end{equation}
with $X_\ell:=W_\ell+\frac{\mu(\ell)z}{\ell u}$, $Z_\ell:=\frac{\mu(\ell)z}{\ell u}$. We call the final triple sum the Bernoulli terms, and the remainder the non-Bernoulli terms. We begin by estimating the non-Bernoulli terms. First,
\[
| (1-w)\log(1-u)|_\# 
\preccurlyeq 2 \sum_{n\geq 1}\frac{u^n}{n}\preccurlyeq \frac{2u}{1-u}.
\]
Similarly, using again $\|W_\ell\|_+\le2$ we get 
\[
\sum_{\ell\geq 2} W_\ell \log(1-u^\ell)
\preccurlyeq
2\sum_{\ell\geq 2} \frac{u^\ell}{1-u^\ell}
\preccurlyeq 2\sum_{n\geq 2} (n-1)u^n
=
\frac{2u^2}{(1-u)^2}.
\]
Next,
\[
|\log \Delta_\ell |_\#  
\preccurlyeq \sum_{n\geq 1} \frac 1 n |1-\Delta_\ell|_\#^n
\preccurlyeq\sum_{n\geq 1} |1-\Delta_\ell|_\#^n
\preccurlyeq |1-\Delta_\ell|_\# \frac{1-u}{1-2u},
\]
using the same derivation as in \eqref{equ:Delta inv}. 
Furthermore, 
\[
\sum_{\ell \geq 2} |1-\Delta_\ell|_\#
\preccurlyeq\sum_{\ell \geq 2}
\sum_{\substack{d\mid \ell \\ d<\ell}} u^{\ell-d}
\preccurlyeq \sum_{n\geq 1}n u^n = \frac{u}{(1-u)^2}.
\]
It follows that
\[
|\sum_{\ell\geq 2} W_\ell \log \Delta_\ell|_\#
\preccurlyeq 2\frac{1-u}{1-2u} \sum_{\ell\geq 2} |1-\Delta_\ell|_\#
\preccurlyeq \frac{2u}{(1-u)(1-2u)}.
\]
Next consider the first double sum in \eqref{equ:logR recap}. We write 
\[
E_\ell^{-k} (X_\ell^k-Z_\ell^k) = E_\ell^{-k}(X_\ell-Z_\ell)\sum_{j=0}^{k-1}X_\ell^{k-1-j}Z_\ell^j
=
E_\ell^{-1} W_\ell\sum_{j=0}^{k-1}(E_\ell^{-1}X_\ell)^{k-1-j}(E_\ell^{-1}Z_\ell)^j.
\]
Using the estimates \eqref{eq:E-inverse-majorantsA}, \eqref{eq:E-inverse-majorantsZ} and \eqref{eq:E-inverse-majorantsY} we compute
\begin{align*}
&\frac 12\left|\sum_{\ell\geq 2} \sum_{k\geq 1} \frac{E_\ell^{-k}(X_\ell^k -Z_\ell^k) }{k} \right|_\#
\preccurlyeq
\sum_{k\geq 1} \frac1{2k} \sum_{\ell\geq 2} |E_\ell^{-1} W_\ell|_\# \sum_{j=0}^{k-1}|E_\ell^{-1}X_\ell|_\#^{k-1-j}|E_\ell^{-1}Z_\ell|_\#^j
\\& 
\preccurlyeq
\sum_{k\geq 1} \frac1{2k}\sum_{j=0}^{k-1}
\left(\sum_{\ell\geq 2} |E_\ell^{-1} W_\ell|_\#\right)
\left(\sum_{\ell\geq 2} |E_\ell^{-1} X_\ell|_\#\right)^{k-1-j}
\left(\sum_{\ell\geq 2} |E_\ell^{-1} Z_\ell|_\#\right)^j
\\&
\preccurlyeq
\sum_{k\geq 1}\frac1{2k}\sum_{j=0}^{k-1}
2A Y^{k-1-j} Y^j =
\sum_{k\geq 1}\frac1{2k} k\cdot 
2A Y^{k-1} =
\frac{A}{1-Y}.
\end{align*}

Now consider the second double sum in \eqref{equ:logR recap}. As before, we have 
\[
E_\ell^{1-k}(X_\ell^k -Z_\ell^k)
=W_\ell \sum_{j=0}^{k-1} (E_\ell^{-1}X_\ell)^{k-1-j}(E_\ell^{-1}Z_\ell)^j,
\]
and proceeding similarly, we get
\begin{align*}
&\left|\sum_{\ell\geq 2} \sum_{k\geq 2} \frac{E_\ell^{1-k}(X_\ell^k -Z_\ell^k)}{k(k-1)} \right|_\#
\preccurlyeq 
\sum_{k\geq 2} \frac{1}{k(k-1)} 2k\cdot Y^{k-1}
\preccurlyeq \frac{2Y}{1-Y}.
\end{align*}

So all non-Bernoulli terms are together bounded by
\begin{equation}\label{eq:non-Bernoulli-majorant}
 \begin{split}
 N(u):={}&
 \frac{2u}{1-u}
 +\frac{2u^2}{(1-u)^2}
 +\frac{2u}{(1-u)(1-2u)}\\
 &+\frac{A(u)}{1-Y(u)}
 +\frac{2Y(u)}{1-Y(u)}.
 \end{split}
\end{equation}
All coefficients of $N(u)$ are nonnegative, and hence in particular $N(1/5)\geq 5^{-m} [u^m]N(u)$.  Direct
substitution gives
\[
 Y(1/5)=\frac{19}{30},
 \qquad
 N(1/5)=
 \frac12+\frac18+\frac56+\frac9{22}+\frac{38}{11}
 =\frac{1405}{264}.
\]
It follows from the definition \eqref{eq:non-Bernoulli-majorant} of $N$ that
\[
 [u^m]N(u)\le \frac{1405}{264}5^m,
\]
which gives the first term on the right-hand side of
\eqref{eq:lambda-majorant}.

It remains to bound the Bernoulli part of
\eqref{equ:logR recap}, i.e., the final triple sum. 
We may proceed as above:
\begin{align*}
&\left|\sum_{\ell\geq 2} \sum_{r\ge2}
 \frac{B_r}{r(r-1)}
 \sum_{k\ge1}\binom{r+k-2}{k}E_\ell^{1-r-k}(X_\ell^k-Z_\ell^k)\right|_\#
 \\&\preccurlyeq
 \sum_{r\ge2}
 \frac{|B_r|}{r(r-1)}
 \sum_{k\ge1}\binom{r+k-2}{k}
 \sum_{\ell\geq 2}
 \sum_{j=0}^{k-1}
 |E_\ell^{-r}|_\#|W_\ell|_\# 
 |E_\ell^{-1}X_\ell|^{k-1-j}_\#
 |E_\ell^{-1}Z_\ell|^{j}_\#
\\& 
\preccurlyeq
\sum_{r\ge2}
 \frac{|B_r|}{r(r-1)}
 \sum_{k\ge1}\binom{r+k-2}{k}
 2k A^r Y^{k-1}.
\end{align*}
By differentiating the binomial expansion of $(1-y)^{1-r}$ we obtain the identity
\[
 \sum_{k\ge1}k\binom{r+k-2}{k}y^{k-1}
 =(r-1)(1-y)^{-r}.
\]
Hence the contribution of Bernoulli index $r$ is bounded by
\[
 \frac{2|B_r|}{r}
 \left(\frac{A(u)}{1-Y(u)}\right)^r.
\]
The rational series in parentheses satisfies
\begin{equation}\label{eq:Bernoulli-rational-majorant}
 \frac{A(u)}{1-Y(u)}
 =\frac{u^2(2-u)}{1-4u-u^2+2u^3}
 \preccurlyeq\frac{2u^2}{1-5u}.
\end{equation}
To verify the last inequality explicitly, write the series on the
left as $\sum_{n\ge2}q_nu^n$.  Its coefficients are nonnegative,
$q_2=2$, $q_3=7$, and its denominator gives
\[
 q_n=4q_{n-1}+q_{n-2}-2q_{n-3}\qquad(n\ge4).
\]
Dropping the last, nonpositive term and applying induction gives
$q_n\le2\cdot5^{n-2}$, which is the claimed coefficientwise
domination.  For even $r$, the standard estimate \cite[Equation 24.9.8]{NIST:DLMF}
\[
 |B_r|\le\frac{4r!}{(2\pi)^r}
\]
and \eqref{eq:Bernoulli-rational-majorant} give, for $m\ge2r$,
\begin{align*}
 [u^m]\frac{2|B_r|}{r}
 \left(\frac{A(u)}{1-Y(u)}\right)^r
 &\le
 \frac{8(r-1)!}{\pi^r}
 5^{m-2r}\binom{m-r-1}{r-1}\\
 &=\frac{8\,5^{m-2r}(m-r-1)!}
 {\pi^r(m-2r)!}.
\end{align*}
The Bernoulli contributions with odd $r>1$ vanish.  Enlarging the
majorant by inserting the same positive bound for odd $r$ gives the
sum in \eqref{eq:lambda-majorant}. We also replaced the index $r$ by $b$, to avoid confusion with the quantities $r_m$ of \eqref{eq:auxiliary-summary}.
\end{proof}

For brevity, denote the right-hand side of
\eqref{eq:lambda-majorant} by $\lambda_m^{\mathrm{maj}}$, and put
\begin{equation}\label{eq:T-def}
 T_m:=
 \frac{m\lambda_m^{\mathrm{maj}}}
 {6^m\Gamma(m/2+1)}
 =\alpha_m+\sum_{b=2}^{\lfloor m/2\rfloor}\beta_{m,b},
\end{equation}
where
\begin{equation}\label{eq:alpha-beta-def}
 \alpha_m:=\frac{1405}{264}
 \frac{m(5/6)^m}{\Gamma(m/2+1)}
 \qquad\mbox{and}\qquad
 \beta_{m,b}:=
 \frac{8m\,5^{m-2b}(m-b-1)!}
 {\pi^b6^m(m-2b)!\Gamma(m/2+1)}.
\end{equation}
The following computational verification records all finite calculations needed in this
subsection.

\begin{inputbox}
\begin{computerverification}[\texttt{lambda\_t\_bounds.py}]
\label{input:R-bound}
The checker reconstructs from \eqref{eq:Lambda-def} the exact values \eqref{eq:lambda-def}
\[
\begin{array}{c|cccccccccc}
 m&1&2&3&4&5&6&7&8&9&10\\ \hline
 \lambda_m&3&\frac{23}6&\frac{29}6&\frac{20}3&\frac{141}{10}&
 \frac{403}{18}&\frac{1985}{42}&\frac{10703}{120}&
 \frac{5273}{30}&\frac{256937}{660}
\end{array}
\]
and verifies the three scalar inequalities
\begin{equation}\label{eq:finite-lambda-checks}
 \sum_{m=1}^{10}
 \frac{m\lambda_m}{6^m\Gamma(m/2+1)}<\frac{17}{20},
 \qquad
 T_{11}<\frac1{32},
 \qquad
 T_{12}<\frac1{96}.
\end{equation}
The quantities $T_{11}$ and $T_{12}$ are evaluated from the
explicit formulas \eqref{eq:T-def} and \eqref{eq:alpha-beta-def}; no
other value of $T_m$ is evaluated.  The bounds $\pi>3$ and
$\sqrt\pi>7/4$ already reduce all three displayed comparisons to
finite rational inequalities.
\end{computerverification}
\end{inputbox}

All remaining coefficients are controlled by the following elementary
contraction.

\begin{lem}\label{lem:T-contraction}
For every $m\ge11$,
\[
 T_{m+2}<\frac17T_m.
\]
\end{lem}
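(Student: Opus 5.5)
We will prove the contraction termwise: we show $\alpha_{m+2}<\frac17\alpha_m$, and that every $\beta_{m+2,b}$ is dominated by a corresponding term of $T_m$ with a ratio below $\frac17$. Since $T_m$ is a sum of nonnegative terms, this gives the claim.

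\textbf{The $\alpha$ terms.} From \eqref{eq:alpha-beta-def} and $\Gamma(m/2+2)=(m/2+1)\Gamma(m/2+1)$ we get
\[
\frac{\alpha_{m+2}}{\alpha_m}=\frac{m+2}{m}\cdot\frac{25}{36}\cdot\frac{1}{m/2+1}=\frac{25}{18m}.
\]
For $m\ge11$ this is $25/198<1/7$.

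\textbf{The $\beta$ terms.} The indices $b$ in $T_{m+2}$ run over $2\le b\le\lfloor m/2\rfloor+1$. For $2\le b\le\lfloor m/2\rfloor$ we pair $\beta_{m+2,b}$ with $\beta_{m,b}$. A direct computation gives
\[
\frac{\beta_{m+2,b}}{\beta_{m,b}}=\frac{m+2}{m}\cdot\frac{25}{36}\cdot\frac{(m-b+1)(m-b)}{(m-2b+2)(m-2b+1)}\cdot\frac{1}{m/2+1}
=\frac{25}{18m}\cdot\frac{(m-b+1)(m-b)}{(m-2b+2)(m-2b+1)}.
\]
The obstacle is that this factor grows as $b$ approaches $m/2$. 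It is about $4$ when $b\approx m/2-c$, and it is unbounded relative to $m$ near $b=\lfloor m/2\rfloor$, where the denominator is $O(1)$ and the numerator is about $m^2/4$. So naive termwise pairing fails near the top of the range, and we split the argument.

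\emph{Small $b$.} For $b\le m/4$, say, the factor is bounded by $(3m/4)^2/(m/2)^2\cdot(1+O(1/m))\le 9/4+\dots$, so the ratio is at most roughly $\frac{25\cdot 2.5}{18m}<\frac17$ for $m\ge11$. We will check the exact inequality as a rational inequality in $m$.

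\emph{Large $b$.} Here we instead compare $\beta_{m+2,b}$ with $\beta_{m,b-1}$, shifting the index so that $m-2b$ is preserved: $(m+2)-2b=m-2(b-1)$. Then $5^{m+2-2b}/(m+2-2b)!$ equals $5^{m-2(b-1)}/(m-2(b-1))!$, and
\[
\frac{\beta_{m+2,b}}{\beta_{m,b-1}}=\frac{m+2}{m}\cdot\frac{1}{36\pi}\cdot\frac{(m-b+1)!}{(m-b)!}\cdot\frac{1}{m/2+1}=\frac{2(m-b+1)}{36\pi m}\le\frac{1}{18\pi}<\frac17.
\]
This shifted pairing in fact works uniformly for every $b\ge3$, because $m-b+1\le m$. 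It also absorbs the new top index $b=\lfloor m/2\rfloor+1$, since $b-1=\lfloor m/2\rfloor$ is a valid index of $T_m$.

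\textbf{Assembling the pairing.} We therefore map $b\mapsto b-1$ for $3\le b\le\lfloor m/2\rfloor+1$, which is injective onto $2\le b-1\le\lfloor m/2\rfloor$. The only remaining term is $\beta_{m+2,2}$, which we compare with $\alpha_m$. Together with $\alpha_{m+2}$, this means we must show
\[
\alpha_{m+2}+\beta_{m+2,2}<\tfrac17\alpha_m .
\]
Write $\alpha_{m+2}=\frac{25}{18m}\alpha_m$ and compute
\[
\frac{\beta_{m+2,2}}{\alpha_m}=\frac{8(m+2)\,5^{m-2}(m-1)!\,264}{\pi^2\,6^{m+2}(m-2)!\,(m/2+1)\,1405\,m\,(5/6)^m}.
\]
The powers combine as $5^{m-2}/5^m$ and $6^m/6^{m+2}$, giving $1/(25\cdot36)$, and $(m-1)!/(m-2)!=m-1$. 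Since $(m+2)/(m/2+1)=2$, the ratio simplifies to
\[
\frac{\beta_{m+2,2}}{\alpha_m}=\frac{16\cdot264\,(m-1)}{900\,\pi^2\cdot1405\,m}\approx 3.4\cdot10^{-4}.
\]
Then $\frac{25}{198}+3.4\cdot10^{-4}<\frac17$ holds for all $m\ge11$, using only $\pi>3$.

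The main step to be careful with is the index bookkeeping of the shifted pairing, including the top index $\lfloor m/2\rfloor+1$ for both parities of $m$.
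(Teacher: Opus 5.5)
Your final assembled argument is correct and is exactly the paper's proof: you pair $\beta_{m+2,b}$ with $\beta_{m,b-1}$ (ratio $\frac{m-b+1}{18\pi m}<\frac1{54}$), and you use $\alpha_{m+2}/\alpha_m=\frac{25}{18m}\le\frac{25}{198}$ and $\beta_{m+2,2}/\alpha_m=\frac{352(m-1)}{105375\pi^2 m}<\frac1{2500}$. You should delete the exploratory unshifted ``small $b$'' estimate, because it is false in the stated range (already for $m=11$, $b=2$ the unshifted ratio is $\frac{25}{198}\cdot\frac{10}{8}>\frac17$) and nothing in your final argument depends on it.
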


\begin{proof}
The first terms in \eqref{eq:T-def} satisfy
\[
 \frac{\alpha_{m+2}}{\alpha_m}
 =\frac{25}{18m}\le\frac{25}{198}.
\]
For $2\le b\le\lfloor m/2\rfloor$, pair the $b$th summand at $m$
with the $(b+1)$st summand at $m+2$.  Their ratio is
\[
 \frac{\beta_{m+2,b+1}}{\beta_{m,b}}
 =\frac{m-b}{18\pi m}<\frac1{54}.
\]
These pairs account for every term $\beta_{m+2,b}$ except the new term
with $b=2$.  Relative to $\alpha_m$, this new term satisfies
\[
 \frac{\beta_{m+2,2}}{\alpha_m}
 =\frac{352(m-1)}{105375\pi^2m}<\frac1{2500},
\]
where the last inequality follows from $\pi>3$.  Consequently,
using
\[
 \frac{25}{198}+\frac1{2500}<\frac17,
 \qquad
 \frac1{54}<\frac17,
\]
we obtain
\begin{align*}
 T_{m+2}
 &=\alpha_{m+2}+\beta_{m+2,2}
   +\sum_{b=2}^{\lfloor m/2\rfloor}\beta_{m+2,b+1}\\
 &<\left(\frac{25}{198}+\frac1{2500}\right)\alpha_m
   +\frac1{54}\sum_{b=2}^{\lfloor m/2\rfloor}\beta_{m,b}
 <\frac17T_m. \qedhere
\end{align*}
\end{proof}

\begin{proof}[Proof of Lemma \ref{lem:central ineq}]
Combining Lemmas~\ref{lem:lambda-majorant} and~\ref{lem:T-contraction}
with Computer Verification~\ref{input:R-bound} gives
\begin{align*}
 \sum_{m=1}^{\infty}
 \frac{m\lambda_m}{6^m\Gamma(m/2+1)}
 &\le
 \sum_{m=1}^{10}
 \frac{m\lambda_m}{6^m\Gamma(m/2+1)}
 +\sum_{m=11}^{\infty}T_m\\
 &<\frac{17}{20}
 +\frac{T_{11}+T_{12}}{1-1/7}\\
 &<\frac{17}{20}
 +\frac76\left(\frac1{32}+\frac1{96}\right)
 =\frac{647}{720}<\frac9{10}<1.
\end{align*}
In particular, the finite verification in
\eqref{eq:finite-lambda-checks} and the displayed contraction account
for every term in the infinite sum.
\end{proof}

\bibliographystyle{amsalpha}
\bibliography{refs}
\par\vspace{\baselineskip}
\end{document}